\numberwithin{equation}{section}
\newtheorem{theorem}{Theorem}[section]
\newtheorem{corollary}[theorem]{Corollary}
\newtheorem{lemma}[theorem]{Lemma}
\newtheorem{proposition}[theorem]{Proposition}
\newtheorem{question}{Question}
\newtheorem{definition}[theorem]{Definition}
\newtheorem{remark}{Remark}
\newcommand{\ep}{\varepsilon}
\newcommand{\bz}{\mathbb{Z}}
\newcommand{\bb}{\mathbb{F}_p}
\newcommand{\Iff}{if and only if }
\newcommand{\lb}{\{}
\newcommand{\rb}{\}}
\DeclareMathOperator{\Aut}{Aut}
\DeclareMathOperator{\Deg}{Deg}
\DeclareMathOperator{\Stab}{Stab}
\begin{document}

\title{On the lattice of subgroups of the lamplighter group}

\author{R. Grigorchuk}
\address{Department of Mathematics, Texas A\&M University, College Station, TX 77843, USA}
\email{grigorch@math.tamu.edu}
\author{R. Kravchenko}
\address{Department of Mathematics, University of Chicago, Chicago, IL 60637, USA}
\email{rkchenko@math.uchicago.edu}
\thanks{The first author was supported by Simons Foundation and Mittag-Leffler Institute.  Both authors were supported by the ERC starting grant GA 257110 ÒRaWGÓ}



\begin{abstract}
The paper is devoted to the study of the lattice of subgroups of the Lamplighter type groups and to the relative gradient rank.
\end{abstract}

\maketitle

\section{Introduction}
The group $\mathcal{L}$  called Lamplighter is defined as the wreath product $({\mathbb{Z}/2})\wr\mathbb{Z}$. This
group and its generalizations like wreath products of two groups and, more generally, iterated wreath products play important role not only in the group theory but also in theory of random walks  \cite{MR704539},   \cite{MR2276348},
\cite{BPS}, and other areas of mathematics. One of the most unusual properties of $\mathcal{L}$ is that the spectrum of Laplace operator  on Cayley  graph of $\mathcal{L}$ (constructed
using a special system of generators) is pure point spectrum and accordingly the spectral measure is discrete
\cite{MR1866850}. This result together with some other results which also involve the lamplighter type groups lead to the
solution in (\cite{MR1797748}, \cite{AUS}, \cite{GRA},  \cite{LW}, \cite{PSZ})  of different
versions of the Atiyah Problem \cite{MR0420729}.

The group $\mathcal{L}$ has a presentation
\[
\langle a,b | b^2, [b,a^jba^{-j}]\text{ for } j\geq 1\rangle,
\]
which is minimal  (hence the group is not finitely presented). $\mathcal{L}$ is a metabelian group of exponential growth
and is a self-similar group generated by $2$-state automaton over binary alphabet,  \cite{MR1866850},
\cite{MR1841755}, \cite{MR2467008}. Despite relatively simple algebraic structure, the group
$\mathcal{L}$ in fact is not so simple to study and there are many open questions related to it.

In this note we study a subgroup structure of $\mathcal{L}$ and of groups $\mathcal{L}_{n,p}=({\mathbb{Z}/p\mathbb{Z}})^n\wr\mathbb{Z},$ where $p$ is a prime.  
Groups $\mathcal{L}_{1,p}$ play especially important role in study of solvable groups.  For instance the result of Kropholler (\cite{Krop}) tells that the groups of finite rank in the class of finitely generated solvable groups are characterized as those groups which have no section isomorphic with $\mathcal{L}_{1,p}$ for some $p$.

In section \ref{sec3} we realize $\mathcal{L}_{1,p}$ as a group generated by finite automaton of Mealy type with $p$ states over an alphabet on $p$ symbols.  This allows to realize $\mathcal{L}_{1,p}$ as a group acting on a $p$-ry rooted tree in a self-similar manner.  Also this allows to identify a big part of the lattice of $p$-power subgroups with a set of vertex stabilizers for the action of a group on a tree.
Our goal  (not achieved yet) is complete understanding of the lattice of subgroups of $\mathcal{L}_{n,p}$.  Even for subgroups of finite index or normal subgroups this
is not an easy problem. Our first result is the description up to isomorphism of all subgroups of $\mathcal{L}_{n,p}$. In what follows we will skip index $p$, keeping it fixed while $n$ can vary, and denote the group just by $\mathcal{L}_n$, to lighten the notation.  We show in theorem~\ref{3_3} that
either a subgroup of $\mathcal{L}_n$ sits inside the base group $\mathcal{A}_n$ of wreath product and hence is
abelian, or it is isomorphic to $\mathcal{L}_k$ for some $k$.  As a corollary of our arguments we get that a
subgroup of index $p$ of $\mathcal{L}_n$ is   isomorphic  to $\mathcal{L}_n$ or  to $\mathcal{L}_{pn}$ (both cases
occur). This will show in particular that $\mathcal{L}_n$ has a descending chain with trivial intersection of subgroups of primary index
a power of $p$ that consists of groups isomorphic to $\mathcal{L}_n$. 
 Groups with this property are called scale
invariant. Around 2005 there was a conjecture supported by Benjamini and some other mathematicians
that finitely generated scale invariant groups are virtually nilpotent.
 The conjecture is not correct as we see. That $\mathcal{L}$ is scale invariant was implicitly
established  in \cite{MR1866850} and in details investigated by V. Nekrasevych and  G{\'a}bor Pete in \cite{MR2763782}, where many other scale invariant groups are constructed.

In Theorem~\ref{max}  we describe all maximal subgroups of $\mathcal{L}_n$  (they happen  to be all  of finite
index).

Recall that a subgroup $H$ of a group $G$  is called weakly maximal (or near maximal, see \cite{Riles}) if it has infinite index, but every
subgroup of $G$ containing $H$ as a proper subgroup has finite index in $G$.  All weakly
maximal subgroups of $\mathcal{L}_n$ are described in Theorem~\ref{3_9}, moreover it is determined which of them
are closed in pro-$p$ topology.

A property of $\mathcal{L}_n$ called \emph{hereditary free from finite normal subgroups} is established which, according to results of \cite{Grig}, allows
to conclude that any action of $\mathcal{L}_n$ on the rooted tree induces topologically free action on the
boundary of the tree.  We get a formulae for a number of subgroups of $\mathcal{L}_n$ of index $m$ (Proposition~\ref{3_6}) which allows to obtain an upper bound on subgroup growth  (Proposition~\ref{3_7}). A description of pro-$p$-completion of $\mathcal{L}_n$ is presented in Lemma~\ref{3_19}.

Our main tool to study subgroup structure of  $\mathcal{L}_n$ is based on technique of association to a
group a suitable module on which group acts.  For the case  of solvable groups this technique was
successfully used by P.Hall and many other researchers   \cite{MR0110750}. 
We also follow this line considering the base subgroup as a module over the ring of Laurent polynomial in one variable (in fact different module structures are used). 
This allows us to associate to a subgroup $H\leq \mathcal{L}_n$ a triple $(s,V_0,v)$  consisting  of $s\in\mathbb{N}$, $V_0\leq\mathcal{A}_n$, $v\in\mathcal{A}_n$ that
satisfies certain properties listed in Lemma~\ref{3_1} and Corollary~\ref{3_2}. In  case of a  normal subgroup the triple
satisfies more constrains (Lemmas~\ref{normal_group} and \ref{one_more}). Theorem~\ref{prof} gives a description of the lattice of normal
subgroups of $\mathcal{L}_{1,p}$. As a result we make some contribution to the normal subgroup growth (Theorem~\ref{norm1} and Corollary~\ref{norm2}).
Theorems \ref{closedd} and \ref{closeddp} describe when the subgroup is closed in profinite and pro-$p$ topologies in terms of the associated triples.

The last part of the paper deals with one interesting recent notion of rank gradient of a subgroup. It was introduced by Lackenby and
studied by him, Abert, Jaikin-Zapirain, Luck, Nikolov, and other researchers.  In many cases (in particular under
certain amenability assumptions) it is equal to zero. A basic open question about gradient rank in the situation when a group is amenable is Question~\ref{q2} formulated at section \ref{sec4}.


 In a situation of zero gradient rank and when a subgroup $H \leq G$ is
presented as the intersection of descending sequence of subgroups $\{H_m\}$ of finite index it is reasonable
to study a relation between the index $[G:H_m]$ and rank $rk(H_m)$ (i.e. a minimal number of generators) as a
function $rg(m)$ of natural argument $m$.  It is interesting to know what type of asymptotic of such
functions arises, how this function, which we call relative gradient rank, depends on presentation of subgroup in the form of intersection etc. Very
little is known about this. The first observation in this direction in the case of lamplighter group was made
in \cite{AG}, Theorem~\ref{4.2} provides further information. 

After the case of Lamplighter type groups is treated the next step would be to consider the case of metabelian groups 
and more generally of solvable groups and even of elementary amenable groups \cite{Day}, \cite{CGH}.

Another option is the study of the case of nonelementary amenable groups starting with the $3$-generated $2$-group of intermediate growth constructed by the first author in \cite{MR565099}. 

Acknowledgements.  We would like to express our thanks to Y. de Cornulier and D.Segal. We are also indebted to Anna Erschler and Laboratoire de MathŽmatiques d'Orsay for their hospitality. The major part of the work was done while the first author was visiting it and the second author was a postdoc there.

\section{Preliminary facts}

We consider groups $$\mathcal{L}_n=({\mathbb{Z}/p\bz})^n\wr\mathbb{Z}=\oplus_\mathbb{Z}({\mathbb{Z}/p\bz})^n\rtimes\mathbb{Z}.$$ Let $\mathcal{A}_n$ denote the subgroup $\oplus_\mathbb{Z}({\mathbb{Z}/p\bz})^n$ of $\mathcal{L}_n$. Let $x$ denote the right shift by one on $\mathcal{A}_n$, considered as a generator of active group $\bz$.
Elements of $\mathcal{L}_n$ are written as $(v,s)$ where $v\in\mathcal{A}_n$ and $s\in\mathbb{Z}$. If $g=(v,s)$ and $h=(w,t)$ are elements of $\mathcal{L}_n$ then the direct computation gives that $gh=(v+x^sw,s+t)$ and $g^{-1}=(-x^{-s}v,-s)$.

\subsection{The ring $R=\mathbb{F}_p[x,x^{-1}]$}

 Let $\bb$ be the simple field of characteristic $p$. In the following we will make repeated use of the properties of the ring $\mathbb{F}_p[x,x^{-1}]$ of Laurent polynomials, which we denote by $R$. Note also that $R$ is just $\mathbb{F}_p[\mathbb{Z}]$, the group ring of $\mathbb{Z}$ over $\mathbb{F}_p$.

\begin{lemma}
$R$ is principal ideal domain. The invertible elements of $R$ are $\{ax^q|a\in\mathbb{F}_p^*, q\in\mathbb{Z}\}$. Each ideal $I$ of $R$ is of the form $Rf$ where $f\in{}\mathbb{F}_p[x]$, $f(0)=1$, such $f$ is uniquely defined by $I$, up to multiplication by a nonzero element of $\mathbb{F}_p$. The factor ring $R/I$ is isomorphic to $\mathbb{F}_p[x]/f\mathbb{F}_p[x]$. Therefore, each element of $R/I$ has the unique representative polynomial $g$ with degree of $g$ less then $f$.
\end{lemma}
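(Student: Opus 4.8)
The plan is to establish each assertion in turn, working from the structure of $R=\mathbb{F}_p[x,x^{-1}]$ as a localization of the polynomial ring $\mathbb{F}_p[x]$ at the multiplicative set $\{x^n : n\geq 0\}$. First I would verify that $R$ is an integral domain — this is immediate since $\mathbb{F}_p[x]$ is a domain and a localization of a domain is a domain (or one argues directly that the product of the top-degree and bottom-degree terms of two nonzero Laurent polynomials cannot cancel). To identify the units, note that if $fg=1$ then comparing highest and lowest exponents forces $f$ and $g$ each to be monomials $ax^q$ with $a\in\mathbb{F}_p^*$; conversely every such monomial is clearly invertible.

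Next I would prove $R$ is a PID. The cleanest route is to use the localization picture: ideals of $R=S^{-1}\mathbb{F}_p[x]$ correspond to ideals of $\mathbb{F}_p[x]$ (which is a PID, being Euclidean) that are not killed by inverting $x$, so every ideal of $R$ is generated by a single element. Given an ideal $I\neq 0$, pick a nonzero generator; multiplying by a unit $ax^q$ we may clear any factor of $x$ and normalize, so the generator can be taken to lie in $\mathbb{F}_p[x]$ with nonzero constant term, and then scaling by an element of $\mathbb{F}_p^*$ arranges $f(0)=1$. For uniqueness: if $Rf=Rg$ with both $f,g\in\mathbb{F}_p[x]$, $f(0)=g(0)=1$, then $f=ug$ for a unit $u=ax^q$; since neither $f$ nor $g$ is divisible by $x$ we get $q=0$, and then $f(0)=g(0)=1$ forces $a=1$, so $f=g$. (If one only insists $f(0)=1$ without the degree bound there is genuinely nothing to normalize further; the statement's "up to multiplication by a nonzero element of $\mathbb{F}_p$" refers to the more relaxed choice of generator in $\mathbb{F}_p[x]$.)

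For the factor ring, I would exhibit the map $\mathbb{F}_p[x]\to R/Rf$ given by restriction of the quotient map $R\to R/Rf$. It is surjective because $x$ is invertible modulo $f$ (as $f(0)=1$ implies $\gcd(x,f)=1$ in $\mathbb{F}_p[x]$, so some combination $x\cdot h + f\cdot k = 1$, showing $x^{-1}\equiv h$), hence every element of $R/Rf$ is represented by an ordinary polynomial. Its kernel is $Rf\cap\mathbb{F}_p[x]$, which equals $f\mathbb{F}_p[x]$: the inclusion $\supseteq$ is clear, and for $\subseteq$, if $x^{-m}p(x)\cdot\text{(something)}$... more directly, if $g\in\mathbb{F}_p[x]$ and $g=f\cdot r$ with $r\in R$, write $r=x^{-m}q(x)$ with $q\in\mathbb{F}_p[x]$, $m\geq 0$; then $x^m g = f q$, and since $f(0)=1$, $f$ is coprime to $x^m$ in $\mathbb{F}_p[x]$, so $f\mid g$ in $\mathbb{F}_p[x]$. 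Thus $R/Rf\cong\mathbb{F}_p[x]/f\mathbb{F}_p[x]$, and the last sentence about unique representatives of degree less than $\deg f$ is just polynomial division in $\mathbb{F}_p[x]$.

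The main obstacle — really the only place requiring care rather than bookkeeping — is the uniqueness of the normalized generator and the identification of the kernel $Rf\cap\mathbb{F}_p[x]$; both hinge on the single observation that $f(0)=1$ makes $f$ coprime to $x$ in $\mathbb{F}_p[x]$, so that passing between $R$ and $\mathbb{F}_p[x]$ introduces no spurious factors of $x$. Once that coprimality is isolated, everything else is formal. I would state it once as a small sublemma and invoke it wherever needed.
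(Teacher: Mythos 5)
Your proposal is correct and follows essentially the same route as the paper: both reduce to the PID $\mathbb{F}_p[x]$ sitting inside $R$, normalize generators using the units $ax^q$, and obtain $R/Rf\cong\mathbb{F}_p[x]/f\mathbb{F}_p[x]$ from the fact that $f(0)=1$ makes $f$ coprime to $x$, so $x$ is invertible in the quotient. Your invocation of the localization ideal correspondence is just a packaged form of the paper's direct computation $h=x^{-k}g\in Rf$, so the two arguments coincide in substance.
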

\begin{proof}
Let $I$ be an ideal in $R$. Note that $\mathbb{F}_p[x]\subset{}R$. Then $I\cap{}\mathbb{F}_p[x]$ is an ideal in $\mathbb{F}_p[x]$ and since it is principal ideal domain $I\cap{}\mathbb{F}_p[x]=f\mathbb{F}_p$ for some $f\in{}\mathbb{F}_p[x]$. Then $Rf\subset{}I$. For $h\in{}I$ $h=x^{-k}g$ for some $k\in\mathbb{N}$ and $g\in{}\mathbb{F}_p[x]$. Thus $g\in{}I\cap{}\mathbb{F}_p[x]=f\mathbb{F}_p[x]$, and so $h=x^{-k}fa\in{}Rf$. Therefore $Rf=I$. Note that if $f=x^kf_0$ and $f_0(0)=1$ then $Rf=Rf_0$. Thus we have proved that each ideal of $R$ is generated by a single element of the form $f\in{}\mathbb{F}_p[x]$, $f(0)=1$. If $Rf_1=Rf_2$ then $f_1=uf_2$ with $u\in{}R$ invertible in $R$, therefore $u=ax^k$. But since both $f_i(0)=1$ we have that $k=0$.

To prove that $R/I$ is isomorphic to $\mathbb{F}_p[x]/f\mathbb{F}_p[x]$ when $I=Rf$, note that $\mathbb{F}_p[x]$ is a subring of $R$ and $I\cap{}\mathbb{F}_p[x]=f\mathbb{F}_p[x]$, therefore $\mathbb{F}_p[x]/f\mathbb{F}_p[x]$ is subring of $R/I$. It is left to note that since $f(0)=1$ we have that $f=a+xg$ for some $g\in{}\mathbb{F}_p[x]$ and $a\in\mathbb{F}_p^*$ and thus $x$ is invertible in $\mathbb{F}_p[x]/f\mathbb{F}_p[x]$.
\end{proof}

Note that it follows in particular that for $f,f_1\in{}\mathbb{F}_p[x]$ with $f(0)\neq 0$, $f_1$ divides $f$ in $R$ if and only if $f_1$ divides $f$ in $\mathbb{F}_p[x]$. Therefore in the following we will just say that $f_1$ divides $f$ and write $f_1|f$ meaning that it happens both in $R$ and in $\bb[x]$.

\begin{definition}
Let $f\in R$. $\Deg(f)$ is the dimension $\dim_{\mathbb{F}_p}R/fR$ of vector space $R/fR$ over $\mathbb{F}_p$. Note that it is also equal to the $m-k$, where $m$ is the largest dergee of $x$ in $f$ with the  nonzero coefficient and $k$ it the smallest degree of $x$ with nonzero coefficient.
\end{definition}
Note that we use capital 'D' here in order to avoid confusion with standard notion of degree when $f\in \mathbb{F}_p[x]$. In this case $\deg f$ is the dimension of $\bb[x]/ f\bb[x]$, and $\Deg f$ is the dimension of $R/ fR$, clearly $\Deg f\leq\deg f$ and they are equal if and only if $f(0)\neq 0$.  

We will be also interested in the following facts about the ring $M_n(R)$ of $n\times n$ matrices over $R$ and the right module $R^n$ over $M_n(R)$ with the standard action of $M_n(R)$ (but often we also consider $R^n$ as  a left module over $R$ with the action described in each particular case).

The next lemma is basic for many considerations (and true for matrices with coefficients in any principal ideal domain).
\begin{lemma}\label{decomp}
Let $g\in M_n(R)$. Then there exist $a,b\in M_n(R)$ such that $g=adb$ where $d\in M_n(\mathbb{F}_p[x])$ is the diagonal matrix, and $\det(a),\det(b)$ are invertible in $R$ (i.e. $a,b\in GL_n(R)$).
\end{lemma}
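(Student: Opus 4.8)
The plan is to prove this by the classical Smith normal form argument, exploiting that $R$ is a principal ideal domain (hence both Bézout and Noetherian, by the first lemma above). Multiplying $g$ on the left by a matrix in $GL_n(R)$ realizes a \emph{row operation}, and multiplying on the right realizes a \emph{column operation}, where by an operation I mean one of: transposing two rows (or columns), adding an $R$-multiple of one row (column) to another, or scaling a row (column) by a unit of $R$. Each corresponding elementary matrix has determinant a unit of $R$, hence lies in $GL_n(R)$, so it suffices to show that $g$ can be brought to diagonal form by finitely many such operations, with diagonal entries in $\mathbb{F}_p[x]$.

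If $g=0$ we take $a=b=\mathrm{Id}$, $d=0$. Otherwise, consider the collection of principal ideals $(h)\subseteq R$, where $h$ ranges over the nonzero entries of all matrices obtainable from $g$ by the operations above. Since $R$ is Noetherian this collection has a maximal element $(\delta)$; fix a matrix $g'$ and a position attaining it, and move $\delta$ into the $(1,1)$-slot by row and column transpositions. I claim $\delta$ divides every entry of $g'$. If $\delta\nmid g'_{1k}$ for some $k>1$, then, writing $\gcd(\delta,g'_{1k})=\alpha\delta+\beta g'_{1k}$ with $\alpha,\beta\in R$, the column operation replacing column $1$ by $\alpha\cdot(\text{col }1)+\beta\cdot(\text{col }k)$ places $\gcd(\delta,g'_{1k})$ in the $(1,1)$-slot, and $(\delta)\subsetneq(\gcd(\delta,g'_{1k}))$ contradicts maximality; the analogous argument with rows handles the first column. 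Hence $\delta\mid g'_{1k}$ and $\delta\mid g'_{k1}$ for all $k$, and subtracting the appropriate multiples of the first row and column clears them, producing a block matrix $\mathrm{diag}(\delta)\oplus g''$ with $g''\in M_{n-1}(R)$. If some $g''_{kl}$ were not divisible by $\delta$, adding the $k$-th row of the block $g''$ to the first row and reducing once more would reintroduce an entry outside $(\delta)$, again contradicting maximality; so $\delta$ divides every entry of $g''$ as well (the latter fact is stronger than what the statement requires, but costs nothing).

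Now recurse on the smaller matrix $g''$. After $n$ steps we obtain $g=a_0d_0b_0$ with $a_0,b_0\in GL_n(R)$ and $d_0=\mathrm{diag}(d_1,\dots,d_n)$ diagonal over $R$. Finally, each nonzero $d_i$ has the form $x^{k_i}f_i$ with $f_i\in\mathbb{F}_p[x]$ and $f_i(0)\neq 0$; scaling the $i$-th row by the unit $x^{-k_i}$ — an operation absorbed into the left factor — replaces $d_i$ by $f_i\in\mathbb{F}_p[x]$, while zero entries are left untouched. This yields $g=adb$ with $a,b\in GL_n(R)$ and $d\in M_n(\mathbb{F}_p[x])$ diagonal, as claimed.

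The main obstacle is the divisibility claim, namely that the $\subsetneq$-maximal entry divides everything in its row and column (and then in the complementary block): this is exactly where the Bézout property of the PID $R$ is used. Once it is established, the clearing of the first row and column and the induction on $n$ are routine bookkeeping; one should only take mild care to confirm that every manipulation used is effected by left or right multiplication by a matrix of $GL_n(R)$, which is immediate from the determinant computation for the elementary matrices involved.
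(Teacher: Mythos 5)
The paper offers no proof of this lemma at all --- it is invoked as the classical Smith normal form over a principal ideal domain --- so the only question is whether your argument stands on its own. Your overall strategy is the standard one, but the key step fails as written. In the divisibility claim you pass from $g'$ to a new matrix by ``replacing column $1$ by $\alpha\cdot(\text{col }1)+\beta\cdot(\text{col }k)$''. This is not one of the three operations you licensed at the outset, and it is not effected by an element of $GL_n(R)$: the matrix realizing it is the identity except for the entries $\alpha$ in position $(1,1)$ and $\beta$ in position $(k,1)$, and its determinant is $\alpha$, which need not be a unit of $R$. Hence the resulting matrix need not lie in the collection over which you maximized, and the inclusion $(\delta)\subsetneq(\gcd(\delta,g'_{1k}))$ does not by itself contradict the maximality of $(\delta)$. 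The same objection applies to the ``analogous argument with rows''.

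The repair is standard. Writing $d=\gcd(\delta,g'_{1k})=\alpha\delta+\beta g'_{1k}$, act on the pair of columns $(1,k)$ by the block $\begin{pmatrix}\alpha & -g'_{1k}/d\\ \beta & \delta/d\end{pmatrix}$ embedded in the identity; its determinant is $(\alpha\delta+\beta g'_{1k})/d=1$, so this is a genuine column transformation from $GL_n(R)$, and it places $d$ in the $(1,1)$ slot, after which your maximality contradiction is valid (equivalently, note $\gcd(\alpha,\beta)=1$ automatically and complete $(\alpha,\beta)$ to a unimodular $2\times2$ matrix). With this change the rest of your proof --- clearing the first row and column by honest elementary operations, recursing on the $(n-1)\times(n-1)$ block, and finally scaling each nonzero diagonal entry by the unit $x^{-k_i}$ so that it lands in $\mathbb{F}_p[x]$ --- goes through; the extra claim that $\delta$ divides the complementary block is, as you say, not needed for the statement. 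Alternatively, you could sidestep the Noetherian--B\'ezout maneuver entirely: $R=\mathbb{F}_p[x,x^{-1}]$ is Euclidean with respect to $\Deg$ (reduce to polynomials with nonzero constant term by unit factors and divide with remainder), so the elementary-operation Euclidean version of the Smith algorithm, inducting on the minimal $\Deg$ of a nonzero entry, already proves the lemma with only the operations you listed.
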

\begin{corollary}
 $\dim_{\mathbb{F}_p}R^n/gR^n=\Deg (\det (g))$.
\end{corollary}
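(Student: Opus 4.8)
The plan is to reduce to the diagonal (Smith normal form) case via Lemma~\ref{decomp} and then exploit the additivity of $\Deg$. Using Lemma~\ref{decomp}, write $g=adb$ with $a,b\in GL_n(R)$ and $d=\mathrm{diag}(f_1,\dots,f_n)$, $f_i\in\mathbb{F}_p[x]$. Viewing $gR^n$ as the image of the endomorphism $v\mapsto gv$ of $R^n$: since $b$ is invertible over $R$ we have $bR^n=R^n$, hence $gR^n=ad(bR^n)=adR^n$; and since $a\in GL_n(R)$, left multiplication by $a$ is an $R$-module automorphism of $R^n$ carrying $dR^n$ onto $adR^n$, so $R^n/gR^n\cong R^n/dR^n$ as $R$-modules, in particular as $\mathbb{F}_p$-vector spaces. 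As $d$ is diagonal, $dR^n=\bigoplus_{i=1}^n f_iR$ inside $R^n=\bigoplus_{i=1}^n R$, whence $R^n/dR^n\cong\bigoplus_{i=1}^n R/f_iR$ and
\[
\dim_{\mathbb{F}_p}R^n/gR^n=\sum_{i=1}^n\dim_{\mathbb{F}_p}R/f_iR=\sum_{i=1}^n\Deg f_i .
\]

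Next I would record two elementary properties of $\Deg$. First, $\Deg$ is additive on products: for nonzero $f,h\in R$, multiplication by $f$ embeds $R/hR$ into $R/fhR$ (injectivity since $R$ is a domain) with image $fR/fhR$ and cokernel $R/fR$, so $\Deg(fh)=\Deg f+\Deg h$; alternatively this is immediate from the description $\Deg(f)=m-k$ in the Definition, as the top and bottom $x$-degrees of a product add. Second, $\Deg(uf)=\Deg f$ for any unit $u\in R^*$, since $ufR=fR$ and hence $R/ufR=R/fR$ (this also follows from the $m-k$ description, a unit $ax^q$ shifting $m$ and $k$ by the same $q$).

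Finally I would combine these. Since $\det g=\det a\cdot\det d\cdot\det b$ with $\det a,\det b\in R^*$, we have $\det g=u\cdot\det d$ for a unit $u$, and $\det d=\prod_{i=1}^n f_i$; therefore
\[
\Deg(\det g)=\Deg(\det d)=\Deg\Bigl(\prod_{i=1}^n f_i\Bigr)=\sum_{i=1}^n\Deg f_i=\dim_{\mathbb{F}_p}R^n/gR^n,
\]
which is the assertion. The only step requiring an argument beyond formal manipulation is the additivity of $\Deg$ on products (and the harmless invariance under units); the rest is standard Smith-normal-form bookkeeping, so I do not expect any real obstacle here.
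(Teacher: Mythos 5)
Your proposal is correct and follows essentially the same route as the paper: reduce via the decomposition $g=adb$ of Lemma~\ref{decomp} to the diagonal case, note $\det g$ differs from $\det d$ by a unit, and identify $R^n/dR^n$ with $\oplus_j R/d_jR$. The only difference is that you explicitly verify the additivity of $\Deg$ on products and its invariance under units, which the paper leaves implicit.
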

\begin{proof}
Indeed, if $g=adb$ is the decomposition then $\det(g)$ equals to a product of $\det(d)$ and an invertible element of $R$, thus $R/\det(g)R=R/\det(d)R$ and therefore $\Deg(\det(g))=\Deg(\det(d))$. Also, $R^n/gR^n=R/adR^n$ is isomorphic to $R^n/dR^n$. Now if $d_1,\dots,d_n$ are diagonal entries of $d$ then $\det(d)=d_1\cdots d_n$ and $R^n/dR^n=\oplus_{j=1}^n R/d_jR$.
\end{proof}

Note that if $g,g_1\in M_n(R)$ then $gR^n\subset g_1R^n$ if and only if $g=g_1h$ for some $h\in M_n(R)$, that is if $g_1$ divides $g$ in $M_n(R)$.

\begin{lemma}\label{maxmatrix}
Let $g\in M_n(R)$. Submodule $gR^n$ is maximal in $R^n$ if and only if $\det(g)$ is nonzero and irreducible in $R$.
\end{lemma}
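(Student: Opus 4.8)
The plan is to relate maximality of the submodule $gR^n$ in $R^n$ to the structure of the quotient $R^n/gR^n$, and then to compute that quotient using Lemma~\ref{decomp}. First I would dispose of the degenerate case: if $\det(g)=0$, then by Lemma~\ref{decomp} some diagonal entry $d_j$ of $d$ is zero, so $R^n/gR^n \cong \bigoplus_j R/d_jR$ has a free summand $R$; since $R$ is not a simple module (it is not even finitely generated as an $\mathbb{F}_p$-vector space, or more cheaply, $R$ has the proper nonzero submodule $xR$... wait, $xR = R$; use instead $(x-1)R$, a proper nonzero ideal), the quotient is not simple, hence $gR^n$ is not maximal. So we may assume $\det(g) \neq 0$.

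Next, assuming $\det(g)\neq 0$, write $g = adb$ with $a,b \in GL_n(R)$ and $d$ diagonal with entries $d_1,\dots,d_n \in \mathbb{F}_p[x]$. Since $a$ and $b$ are invertible, the lattice of submodules of $R^n$ containing $gR^n$ is isomorphic (via multiplication by $a^{-1}$ on the left and a relabeling by $b$) to the lattice of submodules containing $dR^n$; in particular $gR^n$ is maximal if and only if $dR^n$ is maximal. Now $R^n/dR^n \cong \bigoplus_{j=1}^n R/d_jR$, and a direct sum of nonzero modules is simple if and only if exactly one summand is nonzero and that summand is simple. A summand $R/d_jR$ is zero precisely when $d_j$ is a unit, and it is simple precisely when $d_jR$ is a maximal ideal of $R$, i.e. (by the PID structure from the first lemma, $R$ being a PID) precisely when $d_j$ is irreducible in $R$. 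Hence $dR^n$ is maximal if and only if exactly one $d_j$, say $d_{j_0}$, is irreducible in $R$ and all the others are units. In that case $\det(d) = d_1\cdots d_n$ equals $d_{j_0}$ times a unit, so $\det(d)$ — and therefore $\det(g)$, which differs from $\det(d)$ by a unit — is irreducible in $R$. Conversely, if $\det(g)$ is irreducible in $R$, then $\det(d) = d_1\cdots d_n$ is irreducible, so (irreducibles being prime in a PID and having no nontrivial factorizations) exactly one factor $d_{j_0}$ is a non-unit and it is irreducible while the rest are units; this is exactly the maximality condition. This completes the argument.

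The only genuinely delicate point is making sure the reduction "$gR^n$ maximal $\iff$ $dR^n$ maximal" is clean: one must check that left multiplication by $a \in GL_n(R)$ is an $R$-module automorphism of $R^n$ carrying $dR^n$ onto $adR^n$, and that right multiplication of the generator matrix $d$ by $b \in GL_n(R)$ does not change the submodule, since $dbR^n = dR^n$ as $bR^n = R^n$. Both are immediate from the fact that $a,b$ are units in $M_n(R)$, but they are the steps where one could slip by confusing left and right actions. Everything else is the routine observation that a direct sum is simple iff it has a single simple summand, together with the dictionary between maximal ideals of a PID and irreducible elements already recorded in the opening lemma.
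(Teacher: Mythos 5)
Your argument is correct and follows essentially the same route as the paper: decompose $g=adb$ via Lemma~\ref{decomp}, reduce maximality of $gR^n$ to maximality of $dR^n$, and observe that $\oplus_j R/d_jR$ is simple exactly when one $d_j$ is irreducible and the rest are units, which is equivalent to irreducibility of $\det(g)$. Your explicit handling of the $\det(g)=0$ case (using the proper nonzero submodule $(x-1)R$ of a free summand) is a small addition the paper leaves implicit, but the proof is otherwise the same.
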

\begin{proof}
Let $g=adb$ be the decomposition from lemma \ref{decomp}. Then $\det(g)$ equals to a product of $\det(d)$ and an invertible element of $R$ and $R^n/gR^n$ is isomorphic to $R^n/d R^n$, thus $g R^n$ is maximal if and only if $d R^n$ is maximal. So we reduced the lemma to the case of diagonal matrices. Let $d_1,\dots,d_n$ be the entries on the diagonal of $d$. Then $R^n/dR^n=\oplus_{j=1}^n R/d_jR$, and therefore $d R^n$ is maximal \Iff one of the $d_i$ is irreducible and the others are invertible, but this happens exactly if $\det d=d_1\cdots d_n$ is irreducible.
\end{proof}

Suppose that $U$ is a submodule of $R^n$. Then by the structure theorem for the finitely generated modules over principal ideal domain, $R^n/U$ is isomorphic to $\oplus_{i=1}^n R/q_i^{r_i}R$, where $q_i$ are $0$ or irreducible polynomials in $\mathbb{F}_p[x]$ such that $q_i(0)=1$. Up to permutation, $q_i$ and $r_i$ are uniquely defined by $U$.
\begin{definition}\label{dett}
Let $U$ be a submodule of $R^n$, with the quotient $R^n/U$ isomorphic to $\oplus_{i=1}^n R/q_i^{r_i}R$ as above. Define $${\det}^*U=\prod_{i:q_i\neq 0}q_i^{r_i}\in\mathbb{F}_p[x].$$
\end{definition}  

\subsection{Subgroups of $\mathcal{A}_n$}

Let $U$ be an abelian subgroup of $\mathcal{A}_n$. 
\begin{definition}\label{expp}
Define $e(U)$ to be the minimal positive $e$ such that $x^eU=U$, if such $e$ exists, and $+\infty$ otherwise.
\end{definition}  
Note that if $x^sU=U$ for some $s>0$ then $e(U)$ divides $s$.

\begin{lemma}\label{mod_structure}
Define a new structure of left $R$-module on $\mathcal{A}_n$ by the rule $x*v=x^sv$, denote this new $R$-module by $\mathcal{A}^s_n$. Then $\mathcal{A}^s_n$ is isomorphic to $R^{ns}$.
\end{lemma}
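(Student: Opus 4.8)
The plan is to reduce everything to the elementary fact that $R$, viewed as a module over the subring $R_s:=\mathbb{F}_p[x^s,x^{-s}]\subseteq R$, is free of rank $s$, combined with the observation that $R_s$ is abstractly isomorphic to $R$. First I would record the standard identification of the base group with a free module in the \emph{natural} $R$-structure (the one where $x*v=xv$): since $\oplus_{\mathbb Z}\mathbb{Z}/p\mathbb{Z}$ is precisely the group ring $\mathbb{F}_p[\mathbb{Z}]=R$ with $x$ acting as the shift $\mathsf{x}$, one has $\mathcal{A}_n\cong R^n$ as a left $R$-module; call this isomorphism $\Psi$.

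Next I would prove that $\{1,x,\dots,x^{s-1}\}$ is an $R_s$-basis of $R$. Both spanning and uniqueness follow by sorting the monomials $x^i$ occurring in a Laurent polynomial according to the residue of $i$ modulo $s$: every $h\in R$ can be written uniquely as $h=\sum_{r=0}^{s-1}x^r g_r(x^s)$ with $g_r\in\mathbb{F}_p[y,y^{-1}]$. Consequently $R^n$ is a free $R_s$-module of rank $ns$. Then I would introduce the ring isomorphism $\phi\colon R\to R_s$ determined by $\phi(x)=x^s$ (equivalently $\sum a_i x^i\mapsto\sum a_i x^{si}$); it is surjective by definition of $R_s$ and injective because the basis just exhibited shows distinct powers of $x^s$ are $\mathbb{F}_p$-independent. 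Restriction of scalars along $\phi$ turns any $R_s$-module into an $R$-module and carries a free $R_s$-module of rank $ns$ to a free $R$-module of rank $ns$.

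Finally I would check that the $R$-module obtained from $\mathcal{A}_n$ — equipped with its natural $R_s$-action, i.e.\ with $x^s$ acting as the $s$-fold shift, which is exactly the action inherited through $\Psi$ — by restriction of scalars along $\phi$ is nothing but $\mathcal{A}^s_n$: for $x\in R$ the restricted action is that of $\phi(x)=x^s$, i.e.\ $x*v=x^sv$, which is the defining rule of $\mathcal{A}^s_n$. Stringing together $\mathcal{A}_n\cong R^n$ (as natural $R_s$-modules, via $\Psi$), freeness of $R^n$ over $R_s$ of rank $ns$, and restriction along $\phi$, we obtain $\mathcal{A}^s_n\cong R^{ns}$ as left $R$-modules.

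The main obstacle here is not a conceptual one; the only things requiring care are the bookkeeping about which ring acts in which way (left versus right is irrelevant since $R$ is commutative) and stating the freeness of $R$ over $R_s$ with an explicit basis so that the rank count $ns$ is transparent. One can also bypass the module-theoretic language altogether and write the isomorphism by hand: reindex $\mathbb{Z}\cong\{0,\dots,s-1\}\times\mathbb{Z}$ via $i=r+sq$, regroup $\mathcal{A}_n=\oplus_{i\in\mathbb{Z}}(\mathbb{Z}/p\mathbb{Z})^n$ as $\oplus_{q\in\mathbb{Z}}(\mathbb{Z}/p\mathbb{Z})^{ns}$, and observe that the $x^s$-shift on the index $i$ is exactly the shift $q\mapsto q+1$, which is the action defining $R^{ns}$.
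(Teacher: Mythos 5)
Your argument is correct and is essentially the paper's proof in expanded form: the paper simply writes down the explicit isomorphism $(f_{ij})\mapsto\sum_{i,j}f_{ij}(x^s)x^je_i$, which is exactly what your basis $\{x^je_i\}$ of $R^n$ over $R_s=\mathbb{F}_p[x^s,x^{-s}]$ together with restriction of scalars along $x\mapsto x^s$ produces. No gaps; the extra module-theoretic bookkeeping (and the reindexing $i=r+sq$ remark) just makes explicit what the paper's one-line formula encodes.
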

\begin{proof}
Indeed, let $e_i$, $1\leq{}i\leq{}n$ be the basis of space $(\mathbb{Z}/p\bz)^n$ at place $0$ in $\mathcal{A}_n$. Then the map
\begin{equation}\label{isomorphism}
R^{ns}\ni (f_{ij})_{1\leq{i}\leq{n},0\leq{j}\leq{s-1}}\mapsto\sum_{i,j}f_{ij}(x^s)x^je_i \in \mathcal{A}_n^s,
\end{equation}
where $f_{ij}\in{}R$, is the isomorphism of $R$ modules.
\end{proof}

\begin{definition}\label{ddet}
Suppose that $e(U)$ is finite for some $U\subset\mathcal{A}_n$. By the previous Lemma we can consider $U$ as a submodule of $R^{ne(U)}$. Define ${\det}^*U$ by the definition \ref{dett}.
\end{definition}

We now show that to compute ${\det}^*U$ it is possible to use other  $s$ such that $x^sU=U$.
\begin{proposition}
Suppose $U\subset\mathcal{A}_n$ and $x^sU=U$,  and let $f(x)={\det}^*U$ where $U$ is considered as a submodule of $R^{ns}$. Then we also have $x^{sp}U=U$ and if $g(x)$ is ${\det}^*U$ where $U$ is considered as submodule of $R^{nsp}$, then $f(x)=g(x)$.
\end{proposition}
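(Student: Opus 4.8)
The equality $x^{sp}U=U$ is immediate: iterating $x^sU=U$ gives $x^{2s}U=x^s(x^sU)=U$ and, after $p$ steps, $x^{sp}U=U$; in particular $U$ is an $R$-submodule of $\mathcal{A}_n^{sp}\cong R^{nsp}$, so $g$ is well defined. For the equality $f=g$ the plan is to identify ${\det}^*$ with a (suitably normalized) characteristic polynomial and then invoke an elementary Frobenius identity.

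Write $\sigma$ for the shift on $\mathcal{A}_n$, so that on $\mathcal{A}_n^s$ the abstract variable $x$ acts as $\sigma^s$ and on $\mathcal{A}_n^{sp}$ it acts as $\sigma^{sp}=(\sigma^s)^p$; set $N=\mathcal{A}_n/U$, viewed as an $\mathbb{F}_p$-vector space, and let $T\subseteq N$ be the set of $v$ with $\dim_{\mathbb{F}_p}\mathbb{F}_p[\sigma^s,\sigma^{-s}]v<\infty$. One checks that $T$ is also the set of $v$ with $\dim_{\mathbb{F}_p}\mathbb{F}_p[\sigma^{sp},\sigma^{-sp}]v<\infty$ (using that $\sum_{\ell=0}^{p-1}\sigma^{s\ell}W$ is $\sigma^s$-invariant whenever $W$ is $\sigma^{sp}$-invariant), so $T$ is simultaneously the torsion submodule of $N$ for the $R$-structure coming from $R^{ns}$ and for the one coming from $R^{nsp}$, and it is the same finite $\mathbb{F}_p$-subspace in both. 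By the structure theorem $R^{ns}/U\cong\bigoplus_i R/q_i^{r_i}R$ with each $q_i$ either $0$ or irreducible with $q_i(0)=1$; the summands with $q_i=0$ are torsion free, so $T\cong\bigoplus_{q_i\neq 0}R/q_i^{r_i}R$ and ${\det}^*U=\prod_{q_i\neq 0}q_i^{r_i}$ is determined by $T$ alone. On a summand $R/q^rR\cong\mathbb{F}_p[x]/(q^r)$ the operator ``multiplication by $x$'' (that is, $\sigma^s$) has characteristic polynomial $q^r$ up to a nonzero scalar; since both the characteristic polynomial and the rescaling that normalizes the constant term to $1$ are multiplicative over direct sums, $f$ equals the characteristic polynomial of $\sigma^s$ acting on $T$, rescaled so that its constant term is $1$. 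The same computation applied to $R^{nsp}$ shows that $g$ is the corresponding normalized characteristic polynomial of $\sigma^{sp}=(\sigma^s)^p$ acting on the same space $T$.

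It therefore suffices to show that an $\mathbb{F}_p$-linear operator $A$ on a finite dimensional $\mathbb{F}_p$-space and its $p$-th power $A^p$ have the same characteristic polynomial. Over $\overline{\mathbb{F}_p}$ write this polynomial as $h(t)=\prod_i(t-\lambda_i)$; then $A^p$ has eigenvalues $\lambda_i^p$, so its characteristic polynomial is $\prod_i(t-\lambda_i^p)$. On the other hand, since $h$ has coefficients in $\mathbb{F}_p$ we have $h(t)^p=h(t^p)$, while $h(t)^p=\prod_i(t-\lambda_i)^p=\prod_i(t^p-\lambda_i^p)$; comparing, $h(t^p)=\prod_i(t^p-\lambda_i^p)$ is an identity of polynomials in $t^p$, whence $h(u)=\prod_i(u-\lambda_i^p)$, i.e. $h$ is also the characteristic polynomial of $A^p$. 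Applying this with $A$ the matrix of $\sigma^s$ on $T$ gives that $\sigma^s|_T$ and $\sigma^{sp}|_T$ have equal characteristic polynomials, and after the common normalization $f=g$.

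The substantive part is the bookkeeping in the second paragraph: checking that the torsion submodule $T$ (and its underlying vector space) is genuinely unchanged when one replaces the $\mathbb{F}_p[\sigma^s,\sigma^{-s}]$-structure by the $\mathbb{F}_p[\sigma^{sp},\sigma^{-sp}]$-structure, and that ${\det}^*U$ is exactly the normalized characteristic polynomial of the relevant multiplication operator; granting this, the Frobenius identity finishes the argument at once. A more computational alternative, in the spirit of Lemma~\ref{decomp}, is to write down explicitly, from \eqref{isomorphism}, the change-of-coordinates isomorphism $R^{nsp}\to R^{ns}$ sending $(g_{ik})_{0\le k\le sp-1}$ to $(f_{ij})_{0\le j\le s-1}$ with $f_{ij}(y)=\sum_{\ell=0}^{p-1}y^{\ell}\,g_{i,j+s\ell}(y^p)$, follow the image of $U$ through it, and compare determinants of generating matrices directly; this route avoids characteristic polynomials but is heavier.
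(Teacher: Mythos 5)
Your argument is correct, but it takes a genuinely different route from the paper's. The paper reduces everything to one elementary fact about primary cyclic modules: if $M=R/q^kR$ with $q$ irreducible is re-read as an $R$-module via the new action $x*m=x^pm$, and $k=tp+b$ with $0\leq b<p$, then $M\cong (R/q^tR)^{p-b}\oplus(R/q^{t+1}R)^{b}$; applying this summand by summand to $R^{ns}/U$ (the free summands become free of rank $p$ and do not affect ${\det}^*$), one reads off the new primary decomposition and hence ${\det}^*U$ directly, since $q^{t(p-b)}q^{(t+1)b}=q^{tp+b}=q^{k}$. You avoid computing the new decomposition altogether: you identify ${\det}^*U$ with the characteristic polynomial, normalized to have constant term $1$, of the shift acting on the torsion part $T$ of $\mathcal{A}_n/U$, verify (correctly, via the $\sum_{\ell=0}^{p-1}\sigma^{s\ell}W$ trick) that $T$ is literally the same subspace for the $R^{ns}$- and $R^{nsp}$-structures, and then invoke the Frobenius fact that an $\mathbb{F}_p$-linear operator and its $p$-th power have the same characteristic polynomial. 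Both proofs ultimately rest on the identity $q(x^p)=q(x)^p$, but they trade different things: the paper's lemma gives strictly more information (the exact way each $R/q^kR$ degrades into $q^t$- and $q^{t+1}$-pieces, i.e. the full new module structure), whereas your route is lighter, tracks only the invariant actually needed, and has the merit of making explicit two points the paper leaves implicit, namely the handling of the free summands and the fact that the torsion subspace and the normalization of ${\det}^*$ are unchanged when the module structure is coarsened.
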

\begin{proof}
It follows from the following elementary fact. Suppose $q\in R$ is irreducible, and let $M$ be an $R$-module $R/q^kR$ with the multiplication $x*(r+q^kR)=x^pr+q^kR$ and $k=tp+b$ for $0\leq b<p$. Then $M$ is isomorphic to $(R/q^tR)^{p-b}\oplus(R/q^{t+1}R)^b$. 
\end{proof}

\subsection{Tree of cosets}
To study residually finite groups is the same as to study groups acting faithfully on spherically homogeneous rooted trees (see \cite{MR1765119}, \cite{MR1841755}). There is a bijection between such actions and descending chains of subgroups of finite index with trivial core. The next definition illustrates this. 
\begin{definition}
Let $G$ be a group, and $\{H_i|i\geq 1\}$ be the descending chain of subgroups of $G$, with $H_1=G$. 
Construct the following graph: let the vertices be the right cosets of subgroups $H_i$, that is the set of vertices is $\{gH_i|g\in G, i\geq 1\}$. Let $gH_i$ and $g'H_j$ be connected by an edge iff $i=j+1$, $gH_i\subset g'H_j$ or $j=i+1$, $gH_i\supset g'H_j$. Then the constructed graph is a rooted tree with the root $G=H_1$, which is called the tree of cosets corresponding to the chain $\{H_i\}$.
\end{definition}
The tree of cosets is locally finite \Iff $(G:H_i)<\infty$ for all $i$.
Note that each $g\in G$ defines an automorphism of the tree of cosets by the multiplication from the right. 

The rooted tree is called $p$-regular if the number of edges going out from each vertex which is not a root is equal to $p+1$, and the number of vertices going out from the root is equal to $p$. We denote such a tree by $T_p$.  The group $\Aut T_p$ of all automorphisms of the $p$-regular tree can be identified with $\wr_{1}^{\infty}S_p$, where $S_p$ is the group of all permutations on $\{1,\dots,p\}$. If one considers the action of $\bz/p\bz$ on $\{1,\dots,p\}$ by powers of a standard cycle $(1,\dots,p)$, one can consider $\bz/p\bz$ as a subgroup fo $S_p$. Therefore there is a subgroup of $\Aut T_p$ which is isomorphic to $\wr_1^{\infty}(\bz/p\bz)$. In fact, it is the Sylow $p$-subgroup of $\Aut T_p$ as a profinite group, see \cite{MR1841755}.  In the next Proposition we give the following characterization when the tree of cosets is $T_p$, and when the action of $G$ on it lies in $\wr_1^{\infty}(\bz/p\bz)$.
\begin{proposition}\label{tree}
The tree of cosets is $p$-regular if and only if the chain is a $p$-chain, that is $[H_i:H_{i+1}]=p$ for all $i\geq 1$. If the tree of cosets is $p$-regular then automorphisms of the tree defined by elements of $G$ lie in $\wr_1^{\infty}(\mathbb{Z}/p\bz)$ if and only if the chain is subnormal, that is $H_{i+1}$ is normal in $H_i$ for each $i\geq 1$.
\end{proposition}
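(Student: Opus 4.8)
The plan for the first equivalence is purely combinatorial. Recall that the coset tree is rooted at $H_1=G$, that a vertex $gH_i$ with $i\geq 2$ has the single neighbour $gH_{i-1}$ on level $i-1$, and that its neighbours on level $i+1$ are exactly the cosets of $H_{i+1}$ contained in $gH_i$, of which there are $[H_i:H_{i+1}]$. Hence the root has $[H_1:H_2]$ incident edges and every other vertex $gH_i$ has $[H_i:H_{i+1}]+1$ of them. Comparing with the definition of $T_p$ ($p$ edges at the root, $p+1$ at every other vertex), the coset tree equals $T_p$ if and only if $[H_i:H_{i+1}]=p$ for all $i\geq 1$, that is, when the chain is a $p$-chain.

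For the second equivalence assume the chain is a $p$-chain. I would first recall that a choice, for every vertex $v$, of a bijection of its $p$ children with $\{1,\dots,p\}$ identifies $\Aut T_p$ with $\wr_1^\infty S_p$, so that every $\alpha\in\Aut T_p$ carries at each $v$ a label $\alpha|_v\in S_p$ (obtained by comparing the chosen labellings at $v$ and at $\alpha(v)$), and $\alpha$ lies in $\wr_1^\infty(\bz/p\bz)$ precisely when every $\alpha|_v$ lies in the fixed cyclic subgroup $C_p=\langle(1,\dots,p)\rangle\leq S_p$. The key local computation is this: reading the cosets of the tree as right cosets (so that the right-multiplication action of $G$ makes sense), a level-$i$ vertex $v=H_ia$ has its set of children in natural bijection with the coset space $H_{i+1}\backslash H_i$ via $H_{i+1}h\mapsto H_{i+1}ha$, and through this bijection the stabiliser $\Stab_G(v)=a^{-1}H_ia$ acts on the children exactly as $H_i$ acts by right translation on $H_{i+1}\backslash H_i$; this action is transitive, with kernel $\mathrm{Core}_{H_i}(H_{i+1})=\bigcap_{h\in H_i}h^{-1}H_{i+1}h$.

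This gives the forward direction at once: if the $G$-action lies in $\wr_1^\infty(\bz/p\bz)$ then every $g|_v$ with $g\in\Stab_G(v)$ lies in $C_p$, so the transitive degree-$p$ permutation group $H_i/\mathrm{Core}_{H_i}(H_{i+1})$ is contained in $C_p$; a transitive subgroup of $C_p$ equals $C_p$, whence $[H_i:\mathrm{Core}_{H_i}(H_{i+1})]=p$, and since $\mathrm{Core}_{H_i}(H_{i+1})\leq H_{i+1}$ with $[H_i:H_{i+1}]=p$ this forces $\mathrm{Core}_{H_i}(H_{i+1})=H_{i+1}$, i.e.\ $H_{i+1}\trianglelefteq H_i$; as $i$ is arbitrary the chain is subnormal. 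For the converse, if each $H_{i+1}\trianglelefteq H_i$ set $Q_i=H_i/H_{i+1}\cong\bz/p\bz$; then for $v=H_ia$ the map $\bar h\mapsto H_{i+1}ha$ is a bijection $Q_i\to\{\text{children of }v\}$ which depends on the representative $a$ of $v$ only up to translation by $Q_i$, so the children of each level-$i$ vertex carry a canonical structure of $Q_i$-torsor. Picking a base point in each torsor gives a labelling of $T_p$ in which $Q_i$ acts by powers of $(1,\dots,p)$; since an element $g\in G$ sends the child $H_{i+1}ha$ of $H_ia$ to the child $H_{i+1}hag$ of $H_iag$, reading this through the torsor parametrizations attached to the representatives $a$ and $ag$ the induced map between the children of $v$ and those of $vg$ is the identity of $Q_i$, and hence $g|_v$ is that identity composed with the two base-point changes, i.e.\ a translation of $Q_i$, which lies in $C_p$. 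Therefore the $G$-action lies in $\wr_1^\infty(\bz/p\bz)$.

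The part I expect to cost the most care is the second equivalence: making precise the identification $\Aut T_p\cong\wr_1^\infty S_p$ and the attendant labelling conventions, keeping straight the transition labels $g|_v$ for elements that do not fix $v$ (in the converse), and noticing that it is the vertex-stabiliser action — not the global action on a level — that detects the failure of normality. One should also remark that "the $G$-action lies in $\wr_1^\infty(\bz/p\bz)$" is meant relative to a labelling of $T_p$, up to the relabellings that conjugate the standard Sylow pro-$p$ subgroup into itself; this is precisely the freedom used in the converse, where the labelling is built from the torsor structure.
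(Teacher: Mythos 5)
Your proof is correct and follows essentially the same route as the paper: you reduce the wreath-product condition to the action of the conjugated vertex stabilizer, identify it with the right-translation action of $H_i$ on $H_{i+1}\backslash H_i$, and use primality of $p$ (transitive subgroup of a $p$-cycle group, core versus $H_{i+1}$) to get normality, with the converse via a labelling in which $H_i/H_{i+1}$ acts by powers of a fixed cycle. The only difference is that you spell out the labelling/torsor bookkeeping that the paper compresses into two sentences.
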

\begin{proof}
The first statement is obvious. For the second statement note that the fact that automorphisms of the tree defined by elements of $G$ lie in $\wr_1^{\infty}(\mathbb{Z}/p\bz)$ is equivalent to the fact that if for some $a,g\in G$ and $i\geq 1$ we have that $agH_i=gH_i$ then $a$ acts on cosets of $H_{i+1}$ that lie in $gH_i$ trivially or as some fixed full cycle. Note that since $g^{-1}ag\in H_i$ this is equivalent to the fact that for each $i\geq 1$ the action of $H_i$ on $H_i/H_{i+1}$ by right multiplication acts on $H_i/H_{i+1}$ as powers of some fixed full cycle. Since $p$ is prime this is equivalent to $H_{i+1}$ being normal in $H_i$. 
\end{proof}
Note that if $\{H_i\}$ is a descending subnormal $p$-chain then it follows that the stabilizers of the levels of the coset tree are of index a power of $p$ in $G$.

\subsection{Profinite and pro-$p$ topology}
For convenince of the reader we remind some well known definitions. Let $G$ be a group. 
\begin{definition}
$\hat{G}$, which is called the profinite closure of $G$, is the inverse limit of the inverse system $\{G/N\}_N$ where  $N$ runs through all normal subgroups of $G$ with finite index, considered with the projective topology, which is called profinite topology. We denote by $\pi$ the natural homomorphism $\pi:G\rightarrow\hat{G}^{(p)}$. 
\end{definition}
Note that $\pi$ is injective if and only if $G$ is residually finite.
If in this definition we substitute finite index with index a power of $p$, we obtain a definition of pro-$p$ closure.
\begin{definition}
$\hat{G}^{(p)}$, called the pro-$p$ closure of $G$, is the inverse limit of the inverse system $\{G/N\}_N$ where  $N$ runs through all normal subgroups of $G$ with index a power of $p$, considered with the projective topology, which is called pro-$p$ topology. We denote by $\pi_p$ the natural homomorphism $\pi_p:G\rightarrow\hat{G}^{(p)}$.
\end{definition}
Note $\pi_p$ is injective if and only if $G$ is residually $p$-finite (i.e. approximated by finite $p$-groups).
\begin{definition}
Let $H$ be a subgroup of $G$. We say that $H$ is closed in profinite (respectively pro-$p$) topology if $\pi(H)$ ($\pi_p(H)$)  is equal to the intersection of the closure of $\pi(H)$ with $\hat{G}$ (respectively $\pi_p(H)$ with $\hat{G}^{(p)}$). 
\end{definition} 
We have the following characterization:
\begin{proposition}\label{pro2}
Suppose $G$ is finitely generated. A subgroup $H$ of $G$ is closed in pro-$p$ topology if and only if there is a descending subnormal $p$-chain of subgroups $H_i$ such that $H_1=G$, $[H_i:H_{i+1}]=p$ for all $i\geq 1$, and $H=\cap_i H_i$.
\end{proposition}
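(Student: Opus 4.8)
My plan is to reduce everything to the standard description of the pro-$p$ closure. Since the open subgroups of $G$ in the pro-$p$ topology are exactly the subgroups of $p$-power index, the pro-$p$ closure of a subgroup $H$ is the intersection of all subgroups $HN$ with $N$ normal in $G$ of $p$-power index; hence $H$ is closed in the pro-$p$ topology precisely when $H=\bigcap_N HN$ over this family, i.e. when $H$ is an intersection of subgroups of this form (each such $HN$ being then itself of $p$-power index, as it is trapped between $N$ and $G$). I would prove the two implications of the Proposition in this reformulated shape.

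For the ``if'' direction, I would start from a chain $\{H_i\}$ as in the statement and apply the observation following Proposition~\ref{tree} to its coset tree: since the chain is a descending subnormal $p$-chain, the pointwise stabilizers of the levels, which are the cores $K_i=\bigcap_{g\in G} g H_i g^{-1}$, are normal in $G$ of $p$-power index, and plainly $K_i\subseteq H_i$. Since $H\subseteq H_i$ as well and $H_i$ is a subgroup, $H\subseteq H K_i\subseteq H_i$ for every $i$; intersecting and using $H=\bigcap_i H_i$ gives $H=\bigcap_i H K_i$, an intersection of the required type, so $H$ is closed. This half uses nothing about finite generation.

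For the ``only if'' direction, I would invoke finite generation to know that $G$ has only finitely many subgroups of each index, hence only countably many normal subgroups $N_1,N_2,\dots$ of $p$-power index. Put $M_0=G$ and $M_m=N_1\cap\dots\cap N_m$: each $M_m$ is normal in $G$ with $G/M_m$ a finite $p$-group (it embeds into $\prod_{j\le m}G/N_j$), the groups $G=HM_0\supseteq HM_1\supseteq\cdots$ form a descending chain of $p$-power index, and $H\subseteq\bigcap_m HM_m\subseteq\bigcap_j HN_j=H$, so $\bigcap_m HM_m=H$. It remains to refine this chain so that consecutive terms differ by index $p$ with the smaller normal in the larger. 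For fixed $m$, both $HM_{m-1}$ and $HM_m$ contain $M_m$, so I would pass to the finite $p$-group $P=G/M_m$ and interpolate there a chain from $\overline{HM_{m-1}}$ down to $\overline{HM_m}$ with all indices $p$ and each term normal in the previous — possible because every subgroup of a finite $p$-group is subnormal and every subnormal inclusion of finite $p$-groups refines to one with all indices $p$ (subgroups of index $p$ in a $p$-group being automatically normal). Pulling these refinements back to $G$ and concatenating over $m\ge 1$ produces a descending subnormal $p$-chain $G=H_1\supset H_2\supset\cdots$ through every $HM_m$; as the $HM_m$ are cofinal in it, $\bigcap_i H_i=\bigcap_m HM_m=H$, as desired.

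I expect the refinement step to be the main thing to get right: one must argue that any descending chain of $p$-power-index subgroups can be interpolated by an honest subnormal $p$-chain, and this is exactly where reduction modulo the $M_m$ together with the elementary $p$-group facts (subnormality of all subgroups, existence of subnormal series with factors of order $p$) does the work. A secondary caveat: if $[G:H]$ is finite the chain above stabilizes at $H$, whereas a strictly descending $p$-chain has intersection of infinite index in $G$; so in that case the statement should be understood as allowing a finite chain ending at $H$ (obtained by refining any chain from $G$ down to $H$, which exists since a closed subgroup of finite index is open), with the genuinely infinite chain arising when $[G:H]=\infty$.
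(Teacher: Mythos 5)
Your proof is correct and takes essentially the same route as the paper: the ``if'' direction via the level stabilizers of the coset tree (which you rightly identify with the cores of the $H_i$, normal of $p$-power index by the remark after Proposition~\ref{tree}), and the ``only if'' direction by using finite generation to get a cofinal descending sequence of normal subgroups of $p$-power index and refining $HM_{m-1}\supseteq HM_m$ inside the finite $p$-group $G/M_m$, exactly as the paper refines $HN_{j-1}\supseteq HN_j$ modulo $N_j$. Your closing caveat about the finite-index case is a sensible reading of the statement that the paper leaves implicit.
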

\begin{proof}
Since $G$ is finitely generated we may choose a decreasing sequence $N_j$ of normal subgroups of index a power of $p$ such that $\hat{G}^{(p)}=\varprojlim_j G/N_j$. Then $H$ is closed in pro-$p$ topology iff $H=\cap_j HN_j$, and since $HN_j/N_j\subset HN_{j-1}/N_j$ which is a finite $p$-group, we can construct a descending subnormal $p$-chain between $HN_{j}$ and $HN_{j-1}$ for all $j$.

Now suppose $H=\cap_i H_i$, $G=H_1$ and $[H_i:H_{i+1}]=p$ and $H_{i+1}$ is normal in $H_i$ for $i\geq 1$. Construct the coset tree which corresponds to it. $G$ acts on it by multiplication from the right, and $H$ is the stabilizer of the ray $\{H_i\}_i$. Let $M_i$ be the stabilizer of $i$th level of the tree. Then $M_i$ are normal and of index a power of $p$ in $G$ by the proposition. Moreover, $H=\cap_i HM_i$, which implies that $H=\cap_j HN_j$ (since $N_j$ define the pro-$p$ topology, thus for each $i$ there is $j$ such that $M_i\supset N_j$).
\end{proof}
\begin{remark}
Note that a $2$-chain is always subnormal. This is the only case when in this article even prime play a special role.
\end{remark}

\section{Structure of subgroups}\label{sec3}
Now we again identify $\mathcal{A}_n$ with the additive group of the ring $R^n$ and convert it into the left $R$-module with the action $x:v\mapsto xv$. Geometrically one can view this action as given by a left shift, identifying Laurent polynomials with the bilateral sequences over alphabet $\{0,1,\dots,p-1\}$ (containing only finitely many non-zero entries).
 
Let us introduce the notation $\varphi_t(x)=1+x+\dots+x^{t-1}$. Throughout the article we will use
\begin{lemma}\label{3_1}
\label{group}
Let $V$ be a subgroup of $\mathcal{L}_n$. Then it defines the triple $(s,V_0,v)$, where $s\in\mathbb{N}$ is such that $s\mathbb{Z}$ is the image of projection of $V$ on $\mathbb{Z}$, $V_0=V\cap\mathcal{A}_n$, satisfying $x^sV_0=V_0$, and $v\in\mathcal{A}_n$ is such that $(v,s)\in{}V$. The $v$ is uniquely defined up to addition of elements from $V_0$. For $s=0$ one can choose $v=0$.

Conversely any triple $(s,V_0,v)$ with such properties gives rise to a subgroup of $\mathcal{L}_n$. Two triples $(s,V_0,v)$ and $(s',V'_0,v')$ define the same subgroup if and only if $s=s'$, $V_0=V'_0$ and $v+V_0=v'+V_0$.

Moreover, $V\subset{}V'$ if and only if $s'|s$, $V_0\subset{}V_0'$ and $v=\varphi_{s/s'}(x^{s'})v'\mod{}V_0'$.
\end{lemma}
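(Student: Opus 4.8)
The plan is to verify each claim of the lemma by direct computation in the wreath product, using the multiplication rule $gh=(v+x^sw,s+t)$ and inversion rule $g^{-1}=(-x^{-s}v,-s)$ recorded in the preliminary section.

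First I would establish the forward direction. Given a subgroup $V\leq\mathcal{L}_n$, the projection $\pi:\mathcal{L}_n\to\mathbb{Z}$, $(v,s)\mapsto s$, is a homomorphism, so its image is a subgroup of $\mathbb{Z}$, hence of the form $s\mathbb{Z}$ for a unique $s\in\mathbb{N}$; this defines $s$. Clearly $V_0:=V\cap\mathcal{A}_n=\ker(\pi|_V)$ is a subgroup of $\mathcal{A}_n$. Pick any $g=(v,s)\in V$ with $\pi(g)=s$ (or $g=(0,0)$ when $s=0$). To see $x^sV_0=V_0$: for $w$ with $(w,0)\in V_0$, conjugating gives $g(w,0)g^{-1}=(v+x^sw,s)(-x^{-s}v,-s)=(v+x^sw-v,0)=(x^sw,0)$, which lies in $V_0$ since $V$ is a subgroup; applying the same with $g^{-1}$ gives the reverse inclusion, so $x^sV_0=V_0$. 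If $g'=(v',s)$ is another element of $V$ with projection $s$, then $g^{-1}g'=(-x^{-s}v,-s)(v',s)=(-x^{-s}v+x^{-s}v',0)=(x^{-s}(v'-v),0)\in V_0$, and since $x^{-s}V_0=V_0$ this says $v'-v\in V_0$, i.e. $v$ is determined modulo $V_0$.

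Next, the converse: given a triple $(s,V_0,v)$ with $x^sV_0=V_0$, define $V=\{(w+\varphi_k(x^s)\,x^?\dots)\}$ — more precisely, set $V=\{(w,ks):k\in\mathbb{Z},\ w\in\mathbb{Z}\text{-combination as below}\}$. Concretely I would check that $V:=\{(w+w_k,ks):k\in\mathbb{Z},\ w\in V_0\}$ where $w_k$ is chosen so that $(v,s)^k=(w_k,ks)$, is a subgroup. An induction on $k$ using the multiplication rule gives $(v,s)^k=(\varphi_k(x^s)v,\,ks)$ for $k\geq0$ (and the analogous formula for $k<0$ using the inverse). So I set $V=\{(\varphi_k(x^s)v+w,\,ks):k\in\mathbb{Z},\ w\in V_0\}$. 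Closure under multiplication: $(\varphi_k(x^s)v+w,ks)(\varphi_l(x^s)v+w',ls)=(\varphi_k(x^s)v+w+x^{ks}(\varphi_l(x^s)v+w'),(k+l)s)$; using the identity $\varphi_k(x^s)+x^{ks}\varphi_l(x^s)=\varphi_{k+l}(x^s)$ and $x^{ks}V_0=V_0$, this equals $(\varphi_{k+l}(x^s)v+w'',(k+l)s)$ with $w''\in V_0$, as required; closure under inverses is similar. The projection of $V$ is $s\mathbb{Z}$, $V\cap\mathcal{A}_n=V_0$ (the $k=0$ part), and $(v,s)\in V$, so this triple is recovered. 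Two triples give the same subgroup iff the defining data match: equality of projections forces $s=s'$, intersecting with $\mathcal{A}_n$ forces $V_0=V_0'$, and then $(v,s),(v',s')\in V=V'$ with equal projection forces $v-v'\in V_0$ by the uniqueness part already proved.

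Finally, the containment criterion. If $V\subset V'$ with triples $(s,V_0,v)$ and $(s',V_0',v')$, then projecting gives $s\mathbb{Z}\subseteq s'\mathbb{Z}$, i.e. $s'\mid s$; intersecting with $\mathcal{A}_n$ gives $V_0\subseteq V_0'$; and $(v,s)\in V\subseteq V'$ must, by the description of $V'$, have the form $(\varphi_{s/s'}(x^{s'})v'+w',\,s)$ for some $w'\in V_0'$, which is exactly $v\equiv\varphi_{s/s'}(x^{s'})v'\pmod{V_0'}$. Conversely, if these three conditions hold, then every generator $(\varphi_k(x^s)v+w,ks)$ of $V$ with $w\in V_0\subseteq V_0'$ satisfies $ks\in s'\mathbb{Z}$, and substituting $v\equiv\varphi_{s/s'}(x^{s'})v'$ together with the telescoping identity $\varphi_k(x^s)\varphi_{s/s'}(x^{s'})=\varphi_{ks/s'}(x^{s'})$ shows this generator lies in $V'$; hence $V\subseteq V'$. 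I do not anticipate a serious obstacle here — the whole argument is bookkeeping with the two polynomial identities $\varphi_k(x^s)+x^{ks}\varphi_l(x^s)=\varphi_{k+l}(x^s)$ and $\varphi_k(x^s)\varphi_{s/s'}(x^{s'})=\varphi_{ks/s'}(x^{s'})$, the second of which is the only mildly delicate point and is the reason the factor $\varphi_{s/s'}(x^{s'})$ appears; I would state and prove these two identities at the outset so the rest of the verification is mechanical.
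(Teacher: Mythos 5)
Your proof is correct and takes essentially the same route as the paper: the same conjugation computation gives $x^sV_0=V_0$, the same normal form $(w,0)(v,s)^k=(\varphi_k(x^s)v+w,ks)$ underlies the converse and the uniqueness statements, and the containment criterion comes from the same observation that $(v',s')^{s/s'}=(\varphi_{s/s'}(x^{s'})v',s)$. The only cosmetic difference is that you verify the explicit set is a subgroup via the identities $\varphi_k(x^s)+x^{ks}\varphi_l(x^s)=\varphi_{k+l}(x^s)$ and $\varphi_k(x^s)\varphi_{s/s'}(x^{s'})=\varphi_{ks/s'}(x^{s'})$, while the paper defines $V$ as the subgroup generated by $V_0$ and $(v,s)$ and reads off the same normal form.
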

\begin{proof}
To show the first part of the Lemma we need only to prove that $x^sV_0=V_0$. Indeed, 
\[
(v,s)(w,0)(-x^{-s}v,-s)=(x^sw,0)\in{}V,
\]
and so $x^sw\in{}V\cap\mathcal{A}_n=V_0$. Thus $x^sV_0\subset{}V_0$. The converse inclusion is proved in a similar way.

Now suppose we are given a triple $(s,V_0,v)$ such that $V_0$ is a subgroup of $\mathcal{A}_n$ and $x^sV_0=V_0$. Let $V$ be the subgroup of $\mathcal{L}_n$ generated by $V_0$ and $(v,s)$. To show that the triple $(s,V_0,v)$ is a triple for $V$ we need to show that the projection of $V$ onto $\mathbb{Z}$ is $s\mathbb{Z}$ and that $V\cap\mathcal{A}_n=V_0$. Since $V_0\subset\mathcal{A}_n$ the projection of $V$ on $\mathbb{Z}$ is indeed $s\mathbb{Z}$. Clearly, $V_0\subset{}V\cap\mathcal{A}_n$. To prove the converse inclusion note first that if $s=0$ then $v\in{}V_0$ and therefore $V=(V_0,0)$. Suppose that $s\geq{}1$. Then by the equality above any element of $V$ is of the form $(w,0)(v,s)^k$ for some $w\in{}V_0$ and $k\in\mathbb{Z}$, and $(w,0)(v,s)^k\in{}V_0$ if and only if $k=0$, which implies that $V_0=V\cap\mathcal{A}_n$.

Suppose now that we have two triples $(s,V_0,v)$ and $(s',V'_0,v')$, and $V$ is the subgroup generated by $V_0$ and $(v,s)$, while $V'$ is the subgroup generated by $V'_0$ and $(v',s')$. It is left to prove that $s=s'$, $V_0=V'_0$, and $v+V_0=v'+V_0$. First two equalities immediately follow from the definitions of $s$ and $V_0$ in terms of $V$. On the other hand we know that $(v,s)$ and $(v',s)$ both belong to $V=V'$. It follows that
\[
(v',s)(v,s)^{-1}=(v',s)(-x^{-s}v,-s)=(v'-v,0)
\]
also belongs to $V$. Thus $v'-v\in{}V_0$ and therefore $v+V_0=v'+V'_0$.

To prove the last part, note that both $(v,s)$ and $(\varphi_{s/s'}(x^{s'})v',s)=(v',s')^{s/s'}$ belong to $V'$, and therefore $v=\varphi_{s/s'}(x^{s'})v'\mod{}V_0'$. Conversely, if $s|s'$, $V_0\subset{}V_0'$ and  $v=\varphi_{s/s'}(x^{s'})v'+w$ for some $w\in{}V_0'$, then $(v,s)=(w,0)(v',s')^{s/s'}\in{}V'$, and therefore $V\subset{}V'$.
\end{proof}

\begin{remark}
Subgroup $V$ belongs to $\mathcal{A}_n$ if and only if $s=0$ in the triple corresponding to $V$.
\end{remark}

We have the description of normal subgroups of $\mathcal{L}_n$ in terms of the triple.
\begin{lemma}
\label{normal_group}
Let $V$ be a subgroup of $\mathcal{L}_n$. Then $V$ is normal if and only if the corresponding triple $(s,V_0,v)$, defined in Lemma~\ref{group} satisfies the additional properties that $x{}V_0=V_0$, $(1-x^s)\mathcal{A}_n\subset{}V_0$, and $(1-x)v\in{}V_0$.
\end{lemma}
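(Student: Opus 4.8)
The plan is to use the general description of subgroups in terms of triples provided by Lemma~\ref{group}, together with the explicit conjugation formula in $\mathcal{L}_n$, and translate the condition ``$gVg^{-1}=V$ for all $g$'' into conditions on $(s,V_0,v)$. Since $\mathcal{L}_n$ is generated by $\mathcal{A}_n$ and the shift $x$ (i.e. the element $(0,1)$), it suffices to check invariance under conjugation by these generators.

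First I would handle conjugation by the shift. For $w\in\mathcal{A}_n$, we have $(0,1)(w,0)(0,-1)=(xw,0)$, so normality forces $x V_0\subseteq V_0$, and applying $(0,-1)$ on the other side gives the reverse inclusion, hence $xV_0=V_0$; note this is strictly stronger than the automatic $x^sV_0=V_0$ from Lemma~\ref{group}. Likewise $(0,1)(v,s)(0,-1)=(xv,s)$ must lie in $V$, and since $(v,s)\in V$ too, the difference $(xv,s)(v,s)^{-1}=(xv-v,0)=((x-1)v,0)$ lies in $V\cap\mathcal{A}_n=V_0$, giving $(1-x)v\in V_0$. Next I would conjugate by an arbitrary $w\in\mathcal{A}_n$: the computation $(w,0)(v,s)(-w,0)=(w+v-x^sw,s)=((1-x^s)w+v,s)$ shows that $((1-x^s)w+v,s)\in V$; subtracting $(v,s)$ as before yields $(1-x^s)w\in V_0$ for every $w\in\mathcal{A}_n$, i.e. $(1-x^s)\mathcal{A}_n\subseteq V_0$. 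This establishes necessity of all three stated conditions.

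For the converse, assume $(s,V_0,v)$ satisfies $xV_0=V_0$, $(1-x^s)\mathcal{A}_n\subseteq V_0$, and $(1-x)v\in V_0$; I must show the subgroup $V$ generated by $V_0$ and $(v,s)$ is normal. By Lemma~\ref{group} every element of $V$ has the form $(w,0)(v,s)^k$ with $w\in V_0$, $k\in\mathbb{Z}$, and it suffices to show conjugates of $(w,0)$ and of $(v,s)$ by a general element $(u,t)\in\mathcal{L}_n$ lie in $V$. For $(w,0)$ one gets $(u,t)(w,0)(u,t)^{-1}=(x^tw,0)$, which lies in $V_0$ because $xV_0=V_0$. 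For $(v,s)$, a direct computation gives $(u,t)(v,s)(u,t)^{-1}=(u+x^tv-x^su,s)=((1-x^s)u+x^tv,s)$, and I would write $x^tv=v+(x^t-1)v$. The term $(1-x^s)u$ lies in $V_0$ by hypothesis; the term $(x^t-1)v$ lies in $V_0$ since $(1-x)\mid(1-x^t)$ in $R$ and $(1-x)v\in V_0$ with $xV_0=V_0$ making $V_0$ an $R$-submodule, so $(x^t-1)v=-\varphi_{|t|}(x^{\pm1})\cdot(1-x)v\cdot(\text{unit})\in V_0$ (treating $t\geq 0$ and $t<0$ symmetrically using $x^{-1}V_0=V_0$). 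Hence $(u,t)(v,s)(u,t)^{-1}=(v+w',s)$ with $w'\in V_0$, which equals $(w',0)(v,s)\in V$. Therefore $V$ is normal.

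The only mildly delicate point, and the one I would write out carefully, is the bookkeeping in the converse that $(x^t-1)v\in V_0$ for all $t\in\mathbb{Z}$ (both signs), using that $xV_0=V_0$ upgrades $V_0$ to a genuine $R$-submodule on which multiplication by the unit $x$ is invertible, so that $(1-x)v\in V_0$ propagates to $(1-x^t)v\in V_0$ via the telescoping identity $1-x^t=(1-x)\varphi_t(x)$ for $t>0$ and its analogue for $t<0$. Everything else is the routine conjugation arithmetic in the wreath product recorded at the start of Section~2.
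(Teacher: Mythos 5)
Your proposal is correct and follows essentially the same route as the paper: the necessity direction uses exactly the same conjugation computations $(0,1)(w,0)(0,-1)=(xw,0)$, $(w,0)(v,s)(-w,0)=((1-x^s)w+v,s)$ and $(0,1)(v,s)(0,-1)=(xv,s)$. The only difference is that you write out the converse (which the paper dismisses as straightforward), and your verification there — reducing to conjugates of the generators, computing $(u,t)(v,s)(u,t)^{-1}=((1-x^s)u+x^tv,s)$, and using $xV_0=V_0$ to propagate $(1-x)v\in V_0$ to $(1-x^t)v\in V_0$ for all $t\in\mathbb{Z}$ — is accurate.
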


\begin{proof}
 Take $w\in{}V_0$. Then, both
 \[
 (0,1)(w,0)(0,-1)=(x{w},0),
 \]
  \[(0,-1)(w,0)(0,1)=(x^{-1}w,0),
  \] belong to $V$ since $V$ is normal. Thus both $x{w}$ and $x^{-1}w$ also belong to $V_0$. It follows that $x{}V_0=V_0$.

Take now $w\in\mathcal{A}_n$. Then $(w,0)(v,s)(-w,0)=((1-x^s)w+v,s)\in{}V$, since $(v,s)\in{}V$ and $V$ is normal. It follows that $(1-x^s)w+v+V_0=v+V_0$, thus $(1-x^s)w\in{}V_0$, and it is true for any $w\in\mathcal{A}_n$. In a similar way $(0,1)(v,s)(0,-1)=(x{v},s)\in{}V$, therefore $v+V_0=x{v}+V_0$, and thus $(1-x)v\in{}V_0$.
The proof of the converse statement, that if $(s,V_0,v)$ satisfies such additional properties then $V$ is normal, is also straightforward.
\end{proof}

\begin{corollary}\label{one_more_cor}
Let $V$ is a normal subgroup of $\mathcal{L}_n$ with the triple $(s,V_0,v)$. Then if $s>0$ it has finite index in $\mathcal{L}_n$. If $n=1$ and $V$ is nontrivial, $V_0$ has finite index in $\mathcal{A}_n$.
\end{corollary}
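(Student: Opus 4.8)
The plan is to read off both statements directly from Lemma~\ref{normal_group} together with the elementary facts about $R$ recorded in Section~2, so the whole argument should be short.

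First I would treat the case $s>0$. Lemma~\ref{normal_group} gives $(1-x^s)\mathcal{A}_n\subset V_0$, and since $s>0$ the polynomial $1-x^s$ is a nonzero element of $R$; under the identification $\mathcal{A}_n=R^n$ this says that $\mathcal{A}_n/V_0$ is a quotient of $R^n/(1-x^s)R^n\cong\bigl(R/(1-x^s)R\bigr)^n$, which is a finite $\mathbb{F}_p$-vector space of dimension $n\Deg(1-x^s)=ns$ by the corollary following Lemma~\ref{decomp}. Hence $[\mathcal{A}_n:V_0]<\infty$. To pass from this to $[\mathcal{L}_n:V]<\infty$ I would use that $\mathcal{A}_n$ is normal in $\mathcal{L}_n$, so $\mathcal{A}_nV$ is a subgroup, and factor $[\mathcal{L}_n:V]=[\mathcal{L}_n:\mathcal{A}_nV]\,[\mathcal{A}_nV:V]$. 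The first factor equals $[\mathbb{Z}:s\mathbb{Z}]=s$ because the image of $V$ in $\mathbb{Z}$ is $s\mathbb{Z}$ (definition of the triple), and the second equals $[\mathcal{A}_n:\mathcal{A}_n\cap V]=[\mathcal{A}_n:V_0]$; both are finite, giving $[\mathcal{L}_n:V]=s\cdot[\mathcal{A}_n:V_0]<\infty$.

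For the second statement ($n=1$ and $V$ nontrivial) I would split on $s$. If $s>0$ the first part already yields $[\mathcal{A}_1:V_0]<\infty$. If $s=0$, then by Lemma~\ref{group} we have $V=V_0\subset\mathcal{A}_1$, and nontriviality of $V$ forces $V_0\neq\{0\}$; Lemma~\ref{normal_group} (with $s=0$) gives $xV_0=V_0$, and since $V_0$ is an additive subgroup it is automatically an $\mathbb{F}_p$-subspace, hence, being stable under multiplication by $x$ and $x^{-1}$, a nonzero ideal of $R=\mathcal{A}_1$. By the description of ideals of $R$ recalled at the start of Section~2, such an ideal is $Rf$ with $f\in\mathbb{F}_p[x]$, $f(0)=1$, and $R/Rf$ has finite $\mathbb{F}_p$-dimension $\Deg f=\deg f$. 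Therefore $[\mathcal{A}_1:V_0]<\infty$.

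There is no serious obstacle here; the only spot that wants a careful word is the case $n=1,\ s=0$, where one must observe that the sole surviving condition $xV_0=V_0$ upgrades the additive subgroup $V_0$ to an honest ideal of $R$. This is exactly where both hypotheses $n=1$ and "$V$ nontrivial" are used: for $n\ge 2$ a rank-one free submodule such as $Re_1$ is a normal subgroup of $\mathcal{L}_n$ whose $V_0$ has infinite index, and for $V=\{1\}$ one has $V_0=\{0\}$.
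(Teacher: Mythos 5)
Your proposal is correct and follows essentially the same route as the paper: finiteness of $[\mathcal{A}_n:V_0]$ for $s>0$ from the containment $(1-x^s)\mathcal{A}_n\subset V_0$, and for $n=1$, $s=0$ the observation that $xV_0=V_0$ makes $V_0$ a nonzero ideal $fR$ of $R$, hence of finite index. You merely spell out a few steps the paper leaves implicit (the factorization $[\mathcal{L}_n:V]=s\,[\mathcal{A}_n:V_0]$ and the counterexample explaining why $n=1$ is needed), which is fine.
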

\begin{proof}
If $s>0$ then $(x^s-1)\mathcal{A}_n$ has finite index in $\mathcal{A}_n$, therefore since $(x^s-1)\mathcal{A}_n\subset V_0$, we obtain that $V_0$ also has finite index in $\mathcal{A}_n$ and hence $V$ has finite index in $\mathcal{L}_n$.

Let $n=1$. Then if $s>0$ we are done, so suppose $s=0$ and $V=V_0\subset\mathcal{A}_1$. By the previous lemma $xV_0=V_0$, hence identifying $\mathcal{A}_1$ with $R$ we obtain that $V_0$ is the nontrivial ideal of $R$. Any such ideal is of the form $fR$ for some $f\in\mathbb{F}_p[x]$ and hence is of finite index in $R$.
\end{proof}

\begin{corollary}\label{3_2}
Let $V$, $W$ be the subgroups of $\mathcal{L}_n$, with corresponding triples $(s,V_0,v)$ and $(t,W_0,w)$. Let $U=V\cap{}W$. Choose $r>0$ minimal with the property that both $s$ and $t$ divide $r$ and moreover
\[
\frac{1-x^r}{1-x^s}v=\frac{1-x^r}{1-x^t}w\mod{}(V_0+W_0),
\]
and if no such number exists put $r=0$.

 If $r=0$ then $V\cap W=V_0\cap W_0\subset\mathcal{A}_n$, in particular $V\cap W=\{0\}$ if $r=0$ and $V_0\cap W_0=\{0\}$. 

If $r>0$ then exist $v_0\in{}V_0$ and $w_0\in{}W_0$ such that 
\[
\frac{1-x^r}{1-x^s}v+v_0=\frac{1-x^r}{1-x^t}w+w_0.
\]
Denote this element by $u$. Then $(r,V_0\cap{}W_0,u)$ is the triple that corresponds to the subgroup $U$. 
\end{corollary}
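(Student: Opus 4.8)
The plan is to use Lemma~\ref{group} to write down explicit normal forms for the elements of $V$ and of $W$, to intersect these descriptions, and then to read off the triple of $U=V\cap W$ from the three data that characterize it. First I would record the basic computation: from the multiplication rule $gh=(v+x^sw,s+t)$ one gets, for every $k\in\bz$,
\[
(v,s)^k=\Bigl(\tfrac{1-x^{ks}}{1-x^s}\,v,\ ks\Bigr)=\bigl(\varphi_{k}(x^s)\,v,\ ks\bigr)\qquad(k\ge 0),
\]
and the Laurent-polynomial identity $\tfrac{1-x^{ks}}{1-x^s}\in R$ keeps giving the correct value for $k<0$ (e.g. $\tfrac{1-x^{-s}}{1-x^s}=-x^{-s}$, matching $g^{-1}=(-x^{-s}v,-s)$). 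Combined with Lemma~\ref{group} this shows: if $s\ge 1$ then the elements of $V$ are exactly those of the form $\bigl(\tfrac{1-x^{ks}}{1-x^s}v+v_0,\ ks\bigr)$ with $k\in\bz$, $v_0\in V_0$, while if $s=0$ then $V=V_0\subset\mathcal{A}_n$; likewise for $W$. We may harmlessly assume $s,t\ge 1$, since otherwise $V$ or $W$ lies in $\mathcal{A}_n$, so $V\cap W\subset\mathcal{A}_n$, forcing $r=0$ and $V\cap W=V_0\cap W_0$, which is the first assertion.

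Next I would analyze the projection $\pi\colon\mathcal{L}_n\to\bz$, $(y,m)\mapsto m$. Since $V\cap W$ is a subgroup, $\pi(V\cap W)$ is a subgroup of $\bz$, hence $\pi(V\cap W)=r'\bz$ for a unique $r'\ge 0$. Using the normal forms, $(y,m)\in V\cap W$ holds iff $m$ is a common multiple of $s$ and $t$ and $y\equiv\tfrac{1-x^m}{1-x^s}v \pmod{V_0}$ and $y\equiv\tfrac{1-x^m}{1-x^t}w \pmod{W_0}$; for a fixed such $m$ a common solution $y$ exists iff the cosets $\tfrac{1-x^m}{1-x^s}v+V_0$ and $\tfrac{1-x^m}{1-x^t}w+W_0$ meet, i.e. iff $\tfrac{1-x^m}{1-x^s}v\equiv\tfrac{1-x^m}{1-x^t}w \pmod{V_0+W_0}$. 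Thus $m\in\pi(V\cap W)$ precisely when $m$ satisfies the conditions used to define $r$ in the statement, so $r'$ equals the $r$ of the statement (the minimal positive such $m$, or $0$ if none exists).

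Now I would split into cases. If $r=0$, then $V\cap W\subset\ker\pi=\mathcal{A}_n$, hence $V\cap W=(V\cap\mathcal{A}_n)\cap(W\cap\mathcal{A}_n)=V_0\cap W_0$, and the ``in particular'' claim is immediate. If $r>0$, then $r\in\pi(V\cap W)$ means exactly that there are $v_0\in V_0$, $w_0\in W_0$ with $\tfrac{1-x^r}{1-x^s}v+v_0=\tfrac{1-x^r}{1-x^t}w+w_0=:u$; note $s\mid r$, $t\mid r$ force $s,t\ge 1$, so all the fractions are honest elements of $R$, namely $\varphi_{r/s}(x^s)$, $\varphi_{r/t}(x^t)$. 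By the first paragraph $(u,r)=(v_0,0)(v,s)^{r/s}\in V$ and $(u,r)=(w_0,0)(w,t)^{r/t}\in W$, so $(u,r)\in U:=V\cap W$. Finally I would verify that $(r,V_0\cap W_0,u)$ is the triple of $U$ in the sense of Lemma~\ref{group}: $\pi(U)=r\bz$ was shown above; $U\cap\mathcal{A}_n=(V\cap\mathcal{A}_n)\cap(W\cap\mathcal{A}_n)=V_0\cap W_0$, and $x^r(V_0\cap W_0)=V_0\cap W_0$ because $x^sV_0=V_0$, $x^tW_0=W_0$ and $s,t\mid r$; and $(u,r)\in U$. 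Since by Lemma~\ref{group} these three data determine (and are determined by) the triple of a subgroup, $(r,V_0\cap W_0,u)$ is the triple of $U$, as claimed.

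The only step requiring genuine care — the \emph{main obstacle} — is the characterization of $\pi(V\cap W)$ in the second paragraph: one must track the $\mathcal{A}_n$-component modulo $V_0$, modulo $W_0$, and modulo $V_0+W_0$ without sign errors, and dispose of the degenerate cases $s=0$ or $t=0$ so that the fraction $\tfrac{1-x^r}{1-x^s}$ is never formed with $s=0$. Everything else is a routine unwinding of Lemma~\ref{group}.
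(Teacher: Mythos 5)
Your proof is correct, and for the main case it takes a genuinely more direct route than the paper. The $r=0$ part is essentially the paper's argument (the paper also assumes an element $(x,q)\in V\cap W$ with $q\neq 0$, extracts the congruence modulo $V_0+W_0$, and contradicts the minimality defining $r$). For $r>0$ the paper proceeds lattice-theoretically: it checks via the inclusion criterion of Lemma~\ref{group} that the subgroup $U$ with triple $(r,V_0\cap W_0,u)$ lies in both $V$ and $W$, and then shows that any subgroup $U'$ contained in both satisfies $U'\subset U$, which requires a Euclidean-division argument ($r'=ar+b$, forcing $b=0$ by minimality of $r$) to get $r\mid r'$, plus a separate computation giving $u'\equiv\varphi_{r'/r}(x^r)u \bmod V_0\cap W_0$. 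You instead compute $V\cap W$ elementwise from the normal form $(w,0)(v,s)^k=\bigl(\tfrac{1-x^{ks}}{1-x^s}v+w,\,ks\bigr)$, identify $\pi(V\cap W)$ as a subgroup of $\bz$ (so the divisibility that the paper proves by hand comes for free from minimality of the positive generator), and then read off the triple of $U$ directly from its defining data in Lemma~\ref{group}: the projection image $r\bz$, the intersection $U\cap\mathcal{A}_n=V_0\cap W_0$, and the element $(u,r)\in U$. This buys a shorter argument that never needs the ``$V\subset V'$'' criterion or the congruence for $u'$, at the cost of being purely computational rather than exhibiting $U$ as the maximum of the subgroups below $V$ and $W$ in the lattice, which is the structural picture the paper's proof makes explicit. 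Your explicit treatment of the degenerate cases $s=0$ or $t=0$ (so that $\tfrac{1-x^r}{1-x^s}$ is never formed with $s=0$) is a point the paper leaves implicit, and your verification of the power formula for negative exponents closes the one small gap such an elementwise approach could have.
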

\begin{proof} Observe that $u=\varphi_{r/s}(x^s)v+v_0$ and that $(1-x^r)/(1-x^s)=\varphi_{r/s}(x^s)$.
Assume $r=0$. Suppose there is $g=(x,q)\in V\cap W$ with $q\neq 0$. It follows that  $(x,q)=(v_0,0)(v,s)^{n}$ for some $v_0\in V_0$ and integer $n$. But $(v,s)^n=(\varphi_n(x^s)v,sn)$ and therefore $x=v_0+\varphi_n(x^s)v$ and $q=sn$. Analogously since $g=(x,q)\in W$ it follows that  $q=tm$ and $x=w_0+\varphi_m(x^t)w$ for some  $w_0\in W_0$ and some integer $m$. Therefore $\varphi_{q/s}(x^s)v=\varphi_{q/t}(x^t)w\mod(V_0+W_0)$ and thus $r>0$, contradiction. It follows that for any $g\in V\cap W$ the projection onto $\bz$ is $0$, thus $V\cap W\subset \mathcal{A}_n$.

Let now $r>0$. The subgroup defined by the triple $(r,V_0\cap{}W_0,u)$ belongs both to $V$ and $W$. Indeed, we just need to prove that $u=\varphi_{r/s}(x^s)v\mod{}V_0$ and $u=\varphi_{r/t}(x^t)w\mod{}W_0$, but this directly follows from the definition of $r$ and $u$.

Conversely, suppose that a subgroup $U'$ with the triple $(r',U'_0,u')$ belongs to both $V$ and $W$. We need to prove that $U'\subset{}U$, that is, by the Lemma, that $U'_0\subset V_0\cap{}W_0$, $r|r'$ and $u'=\varphi_{r'/r}(x^r)u\mod{}V_0\cap{}W_0$.

Since $u'=\varphi_{r'/s}(x^s)v\mod{}V_0$ and $u'=\varphi_{r'/t}(x^t)v\mod{}W_0$ it follows that $\varphi_{r'/s}(x^s)v=\varphi_{r'/t}(x^t)w\mod{}V_0+W_0$, therefore by the minimality of $r$ $r\leq{}r'$. Let $r'=ar+b$ with $b<r$. Then
\[
\varphi_{r'/s}(x^s)v=\frac{1-x^{r'}}{1-x^s}v=\frac{1-x^b}{1-x^s}v+x^b\frac{1-x^{ar}}{1-x^r}\varphi_{r/s}(x^s)v,
\]
\[
\varphi_{r'/t}(x^t)w=\frac{1-x^{r'}}{1-x^t}w=\frac{1-x^b}{1-x^t}w+x^b\frac{1-x^{ar}}{1-x^r}\varphi_{r/t}(x^t)w,
\]
and thus 
\[
\frac{1-x^b}{1-x^s}v=\frac{1-x^b}{1-x^t}w\mod V_0+W_0.
\]
Since $b<r$ we obtain that $b=0$ and therefore $r$ divides $r'$.

To show that $u'=\varphi_{r'/r}(x^r)u\mod{}V_0\cap{}W_0$ note that 
\[
\varphi_{r'/r}(x^r)u=\frac{1-x^{r'}}{1-x^r}u=\frac{1-x^{r'}}{1-x^s}v+\varphi_{r'/r}(x^r)v_0=\frac{1-x^{r'}}{1-x^t}w+\varphi_{r'/r}(x^r)w_0,
\]
and since $r$ is divisible by both $s$ and $t$, $\varphi_{r'/r}(x^r)v_0\in{}V_0$ and $\varphi_{r'/r}(x^r)w_0\in{}W_0$. Therefore $u'=\varphi_{r'/r}(x^r)u\mod{}V_0$ and $u'=\varphi_{r'/r}(x^r)u\mod{}W_0$, which finishes the proof.
\end{proof}

 The structure of subgroups of $\mathcal{L}_n$ can be described as follows.
\begin{theorem}\label{3_3}
Suppose $V$ is a subgroup of $\mathcal{L}_n$. Then either $V$ is a subgroup of $\mathcal{A}_n$ or $V$ is isomorphic to $\mathcal{L}_k$ for some $k\in\mathbb{N}$. In particular if $V$ is of finite index in $\mathcal{L}_n$ then $V$ is isomorphic to $\mathcal{L}_{ns}$ where $s$ is the projection of $V$ onto $\mathbb{Z}$.
\end{theorem}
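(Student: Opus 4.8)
The plan is to read everything off the triple $(s,V_0,v)$ attached to $V$ by Lemma~\ref{3_1}. If $s=0$, the remark following Lemma~\ref{3_1} already gives $V\subseteq\mathcal{A}_n$, so assume $s>0$. In this case I would first check that $V$ is a split extension $V=V_0\rtimes\langle(v,s)\rangle$ with infinite cyclic complement $\langle(v,s)\rangle$: every element of $V$ has the form $(w,0)(v,s)^k$ with $w\in V_0$ (this is established in the proof of Lemma~\ref{3_1}); projecting to $\mathbb{Z}$ shows $(v,s)^k\in\mathcal{A}_n$ only when $k=0$; and $V_0\trianglelefteq V$ since $V_0$ is abelian and is normalized by $(v,s)$. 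The conjugation action of $(v,s)$ on $V_0$ is $(v,s)(w,0)(v,s)^{-1}=(x^sw,0)$, i.e. multiplication by $x^s$; crucially this does not involve $v$.

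The heart of the argument is to identify the pair $(V_0,\ w\mapsto x^sw)$ with $(\mathcal{A}_k,\text{shift})$ for a suitable $k$. Since $x^sV_0=V_0$, $V_0$ is an $R$-submodule of $\mathcal{A}_n^s$, and $\mathcal{A}_n^s\cong R^{ns}$ by Lemma~\ref{mod_structure}. As $R$ is a principal ideal domain, every submodule of the free module $R^{ns}$ is free, so $V_0\cong R^k$ as an $R$-module for some $k\le ns$. Under this identification the operation $w\mapsto x^sw$ on $V_0$ is, by the very definition of $\mathcal{A}_n^s$, multiplication by $x$ on $R^k$. On the other hand $\mathcal{A}_k=\oplus_{\mathbb{Z}}(\mathbb{Z}/p\mathbb{Z})^k$ is exactly $R^k$ as an $R$-module with $x$ acting by the shift, and $\mathcal{L}_k=\mathcal{A}_k\rtimes\langle(0,1)\rangle$ with $(0,1)$ acting by the shift. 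Hence any $R$-module isomorphism $\alpha\colon V_0\to\mathcal{A}_k$ intertwines the two cyclic actions, and extending $\alpha$ by $(v,s)\mapsto(0,1)$ gives a group isomorphism $V\cong\mathcal{L}_k$: it respects the defining relations $t\,(w,0)\,t^{-1}=(x^sw,0)$ versus $(0,1)\,\alpha(w)\,(0,1)^{-1}=\text{shift}(\alpha(w))$, and is a bijection. (When $V_0=0$ and $s>0$ this reads $V\cong\mathbb{Z}=\mathcal{L}_0$.)

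For the last sentence, assume $[\mathcal{L}_n:V]<\infty$. Projecting to $\mathbb{Z}$ forces $[\mathbb{Z}:s\mathbb{Z}]<\infty$, so $s>0$, and $[\mathcal{A}_n:V_0]=[\mathcal{A}_n:\mathcal{A}_n\cap V]\le[\mathcal{L}_n:V]<\infty$. Hence $R^{ns}/V_0$ is finite over $\mathbb{F}_p$, which forces the free submodule $V_0$ to have full rank $k=ns$ (a submodule of smaller rank leaves a quotient of positive rank, hence infinite). Therefore $V\cong\mathcal{L}_{ns}$.

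I expect the bookkeeping in the second paragraph to be the only genuine obstacle: one must keep the twist by $x^s$ straight while passing through Lemma~\ref{mod_structure}, match it with the untwisted shift on $\mathcal{A}_k$, and verify that the $R$-module isomorphism really does lift to a group isomorphism of the two semidirect products — the mild surprise being that the translation part $v$ of $(v,s)$ plays no role, only the conjugation action (multiplication by $x^s$) matters.
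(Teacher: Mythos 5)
Your proposal is correct and follows essentially the same route as the paper: read off the triple $(s,V_0,v)$, treat $V_0$ as a submodule of the free $R$-module $\mathcal{A}_n^s\cong R^{ns}$ so that it is free of some rank $k$ by the PID property, and identify $V$ with the semidirect product $R^k\rtimes\mathbb{Z}$ where the cyclic factor acts by multiplication by $x$, i.e.\ with $\mathcal{L}_k$, with $k=ns$ in the finite-index case. You merely spell out more explicitly the verification that the $R$-module isomorphism lifts to a group isomorphism of semidirect products, which the paper leaves implicit.
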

\begin{proof}
Let $(s,V_0,v)$ be some triple corresponding to $V$. If $s=0$ then $V=V_0$ is a subgroup of $\mathcal{A}_n$. Suppose that $s\geq{}1$. Recall (Lemma \ref{mod_structure}) that we can consider $\mathcal{A}_n$ as a left $R$-module, which we denoted $\mathcal{A}_n^s$, by defining the action of $x$ on $v$ as $x*v=x^sv$, $v\in\mathcal{A}_n$. Then $V_0$ is $R$ submodule. By Lemma \ref{mod_structure} $\mathcal{A}_n^s$ is a free $R$ module of rank $ns$.  Since $R$ is a principal ideal domain, it is noetherian, and thus $V_0$ is finitely generated as $R$ module, and therefore, as a submodule of the free finitely generated $R$ module $R^{ns}$ $V_0$ is also free, i.e. $V_0$ is isomorphic to $R^k$ for some $k$. $V_0$ is of finite index in $\mathcal{A}_n$ iff $k=ns$.
Now let $H$ be the subgroup of $V$ generated by $(v,s)$. Then $V_0$ is normal in $V$, $V_0\cap{}H=\{0\}$, $H$ is isomorphic to $\mathbb{Z}$. It follows that $V$  is isomorphic to $V_0\rtimes\mathbb{Z}$, where the action of the generator of $\mathbb{Z}$ on $V$ is given by the map $x^s$. It follows, by above, that $V$ is isomorphic to $R^k\rtimes\mathbb{Z}$ where $\mathbb{Z}$ acts on $R^k$ by multiplication by $x$. Therefore, $V$ is isomorphic to $\mathcal{L}_k$. If $V$ is of finite index in $\mathcal{L}_n$ then $k=ns$.
\end{proof}

\begin{corollary}\label{3_4}
Any finitely generated subgroup of $\mathcal{L}_n$ is closed in the profinite topology.
\end{corollary}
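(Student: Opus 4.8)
The plan is to work with the triple $(s,V_0,v)$ that Lemma~\ref{3_1} attaches to $V$ and to split according to whether $s=0$. If $s=0$ then $V=V_0\le\mathcal{A}_n$; since $\mathcal{A}_n=\bigoplus_{\mathbb{Z}}(\mathbb{Z}/p\bz)^n$ is a direct sum of finite groups, a finitely generated subgroup of it lies in a finite sub-sum and hence is finite. As $\mathcal{L}_n$ is residually finite (it is finitely generated metabelian; alternatively the normal subgroups with triples $(m!,(x^{m!}-1)\mathcal{A}_n,0)$ have finite index and trivial intersection), its profinite topology is Hausdorff, so the finite subgroup $V$ is closed. This disposes of the abelian case, and from here on $s\ge 1$.

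Fix $g=(w,k)\in\mathcal{L}_n\setminus V$; I must produce a finite-index subgroup $K$ with $V\le K$ and $g\notin K$. If $s\nmid k$, take $K$ to be the subgroup with triple $(s,\mathcal{A}_n,0)$, namely $\mathcal{A}_n\rtimes s\bz$: it has index $s$, it contains $V$ by the inclusion criterion of Lemma~\ref{3_1}, and it misses $g$. If $k=sj$, then $(v,s)^j=(\varphi_j(x^s)v,sj)\in V$, so replacing $g$ by $g\,(v,s)^{-j}=(w-\varphi_j(x^s)v,\,0)$ changes neither membership in $V$ nor membership in any subgroup of $\mathcal{L}_n$ containing $V$. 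Thus it suffices to handle $g=(w',0)$ with $w'\in\mathcal{A}_n\setminus V_0$, and to find a finite-index $x^s$-invariant subgroup $\tilde V_0$ of $\mathcal{A}_n$ with $V_0\subseteq\tilde V_0$ and $w'\notin\tilde V_0$: the subgroup $K$ with triple $(s,\tilde V_0,v)$ then contains $V$, has index $s\cdot[\mathcal{A}_n:\tilde V_0]<\infty$, and omits $g$.

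To build $\tilde V_0$ I pass to modules. View $\mathcal{A}_n$ as the $R$-module $\mathcal{A}_n^s\cong R^{ns}$ of Lemma~\ref{mod_structure}, so that $V_0$ is a submodule; then $M=R^{ns}/V_0$ is a finitely generated module over the principal ideal domain $R$, hence by the structure theorem recalled just before Definition~\ref{dett} it is isomorphic to $\bigoplus_{i=1}^{ns}R/q_i^{r_i}R$ with each $q_i$ equal to $0$ or irreducible with $q_i(0)=1$. Since $w'\notin V_0$, its image in $M$ has a nonzero component $c$ in some summand $R/q_i^{r_i}R$. If $q_i\ne 0$ that summand is a finite $\mathbb{F}_p$-vector space, and the preimage in $R^{ns}$ of the kernel of the $i$-th projection $M\twoheadrightarrow R/q_i^{r_i}R$ is a finite-index $R$-submodule containing $V_0$ but not $w'$. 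If $q_i=0$ that summand is free of rank one and $c$ is a nonzero Laurent polynomial, so for $m>\Deg c$ one has $c\notin(x^m-1)R$; since $R/(x^m-1)R$ is finite, the preimage in $R^{ns}$ of the kernel of the composite $M\twoheadrightarrow R\twoheadrightarrow R/(x^m-1)R$ (first map the $i$-th projection, second the quotient) is a finite-index $R$-submodule containing $V_0$ but not $w'$. Either submodule is $x^s$-invariant, being an $R$-submodule of $\mathcal{A}_n^s$, so it may be taken for $\tilde V_0$.

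I expect the module step of the previous paragraph to be the only one with real content: it is the assertion that a finitely generated $R$-module is residually finite in the strong sense that every nonzero element is excluded by some submodule of finite $\mathbb{F}_p$-codimension. This is exactly where both relevant features of $R=\mathbb{F}_p[x,x^{-1}]$ are used, namely that it is a principal ideal domain (so the structure theorem applies) and that $R/fR$ is finite for every nonzero $f$ with $f(0)\ne 0$, in particular for $f=x^m-1$. Everything else --- the two case distinctions on $g$, and the inclusion and index computations --- is routine bookkeeping with triples through Lemma~\ref{3_1}.
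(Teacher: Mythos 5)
Your proof is correct, but it takes a genuinely different route from the paper's. The paper proves the corollary by a chain of reductions to a normal form: in the case $s\geq 1$ it embeds $V$ in the finite-index subgroup $W$ with triple $(s,\mathcal{A}_n,0)$, identifies $W$ with $\mathcal{L}_{ns}$ to reduce to $s=1$, applies an automorphism of $\mathcal{L}_n$ fixing $\mathcal{A}_n$ and sending $(0,1)$ to $(v,1)$ to reduce to $v=0$, and finally changes basis in $\mathcal{A}_n$ (in effect Lemma~\ref{decomp}) so that $V_0=\oplus_i f_iR$, for which closedness is immediate. You instead run a pointwise separation argument: for each $g\notin V$ you exhibit an explicit finite-index subgroup containing $V$ and missing $g$, given by a triple $(s,\tilde V_0,v)$, where the only substantive input is that every nonzero element of the finitely generated $R$-module $R^{ns}/V_0$ survives in some finite quotient module --- which you get from the structure theorem together with finiteness of $R/q^rR$ (torsion summands) and of $R/(x^m-1)R$ (free summands). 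Note that you are implicitly using the standard equivalence between the paper's completion-based definition of "closed" and "intersection of the finite-index subgroups containing it"; that equivalence is routine and the paper uses analogous reformulations (e.g.\ Proposition~\ref{pro2}), so this is not a gap. Your bookkeeping with triples (the inclusion criterion, the index formula $s[\mathcal{A}_n:\tilde V_0]$, and the reduction of $g$ to an element of $\mathcal{A}_n$ by multiplying by a power of $(v,s)$) is all consistent with Lemma~\ref{3_1}. What your approach buys: it never uses finite generation of $V$ once $s\geq 1$, so it in fact proves directly that \emph{every} subgroup with $s>0$ is profinitely closed, which is the first half of Theorem~\ref{closedd} (the paper obtains that later by combining Theorem~\ref{3_3} with this corollary). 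What the paper's approach buys: it is shorter given the module machinery already set up, and its reduction devices (passing to the finite-index copy $\mathcal{L}_{ns}$, the automorphism killing $v$) are reused elsewhere in the paper.
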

This result also follows from Proposition 3.19 in \cite{Corn}. Groups with stated property are called LERF or subgroup separable.

\begin{proof}
Let $V$ be a finitely generated subgroup of $\mathcal{L}_n$ and let $(s,V_0,v)$ be the corresponding triple. If $s=0$ then $V\subset\mathcal{A}_n$, and since it is finitely generated, it is finite, hence closed.
If $s>0$, then $V$ belongs to the subgroup $W$ with the triple $(s,\mathcal{A}_n,0)$. $W$ has finite index in $\mathcal{L}_n$, and therefore $V$ is profinitely closed in $\mathcal{L}_n$ if and only if it is closed in $W$. Moreover, $W$  is isomorphic to $\mathcal{L}_{ns}$, and $V$, considered as a subgroup of $\mathcal{L}_{ns}$, has the projection on $\mathbb{Z}$ equal to $\mathbb{Z}$. Hence we have reduced to the case when $s=1$.
For any $v\in\mathcal{A}_n$, there is an automorphism $\varphi$ of $\mathcal{L}_n$ which is identical on $\mathcal{A}_n$ and $\varphi(0,1)=(v,1)$. Since the property of being profinitely closed is preserved under an automorphism, and the triple for $\varphi(V)$ is $(1,V_0,0)$, we have reduced to the case when $s=1$ and $v=0$. Changing the basis in $\mathcal{A}_n$ we reduce to the case when $V_0=\oplus_{i=1}^{n}f_iR$, which is closed in the profinite topology.
\end{proof}

\begin{corollary}
Any subgroup of index $p$ of $\mathcal{L}_n$ is isomorphic either to $\mathcal{L}_n$ or to $\mathcal{L}_{pn}$.
\end{corollary}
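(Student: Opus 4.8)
The plan is to derive this as a quick consequence of Theorem~\ref{3_3}: that theorem already shows every finite-index subgroup of $\mathcal{L}_n$ is isomorphic to $\mathcal{L}_{ns}$, where $s$ is the index of its projection onto $\mathbb{Z}$, so all that remains is to determine which values of $s$ can occur for a subgroup of index exactly $p$. Accordingly I would begin with $V\le\mathcal{L}_n$ satisfying $[\mathcal{L}_n:V]=p$ and pass to a triple $(s,V_0,v)$ for $V$ as in Lemma~\ref{group}, so that $s\mathbb{Z}$ is the image of $V$ under the quotient map $\pi\colon\mathcal{L}_n\to\mathcal{L}_n/\mathcal{A}_n\cong\mathbb{Z}$.

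The one small computation is to pin $s$ down to $\{1,p\}$. I would note first that $s\ge 1$, since $s=0$ would mean $V\subseteq\mathcal{A}_n$, which has infinite index in $\mathcal{L}_n$, contradicting $[\mathcal{L}_n:V]=p$. Then I would introduce $W=\pi^{-1}(s\mathbb{Z})=\mathcal{A}_nV$, a subgroup with $V\le W\le\mathcal{L}_n$ and $[\mathcal{L}_n:W]=[\mathbb{Z}:s\mathbb{Z}]=s$; hence $s$ divides $[\mathcal{L}_n:V]=p$, forcing $s=1$ or $s=p$.

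Finally I would split into the two cases and invoke Theorem~\ref{3_3} in each. If $s=p$, then $[\mathcal{L}_n:W]=p=[\mathcal{L}_n:V]$ together with $V\le W$ forces $V=W=\mathcal{A}_n\rtimes p\mathbb{Z}$, which has finite index and projection $p\mathbb{Z}$, so $V\cong\mathcal{L}_{pn}$. If $s=1$, then $V$ has finite index $p$ and surjects onto $\mathbb{Z}$, so $V\cong\mathcal{L}_{n}$. To confirm that both cases genuinely occur I would point to $\mathcal{A}_n\rtimes p\mathbb{Z}$ itself for the first, and to the subgroup with triple $(1,(1-x)R\oplus R^{n-1},0)$ for the second, whose index is $[\mathcal{A}_n:(1-x)R\oplus R^{n-1}]=p^{\Deg(1-x)}=p$ and which is $\cong\mathcal{L}_n$ by Theorem~\ref{3_3}.

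There is really no serious obstacle here: the substance is contained in Theorem~\ref{3_3}, and the only point needing any attention is the divisibility argument that isolates $s\in\{1,p\}$ (together with the trivial exclusion of $s=0$). A cosmetic alternative would be to use the index identity $[\mathcal{L}_n:V]=s\cdot[\mathcal{A}_n:V_0]$ to obtain the case split ($s=p$, $V_0=\mathcal{A}_n$ or $s=1$, $[\mathcal{A}_n:V_0]=p$) directly, but passing through the subgroup $W$ makes the argument cleaner and sidesteps computing $[\mathcal{A}_n:V_0]$.
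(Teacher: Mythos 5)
Your proposal is correct and follows essentially the same route as the paper: the paper likewise notes that for a subgroup of index $p$ the integer $s$ from Lemma~\ref{group} must be $1$ or $p$ and then invokes Theorem~\ref{3_3} to identify $V$ with $\mathcal{L}_n$ or $\mathcal{L}_{pn}$. Your divisibility argument via $W=\pi^{-1}(s\mathbb{Z})$ and the explicit examples simply spell out details the paper leaves implicit.
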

\begin{proof}
Indeed, if $V$ is of index $p$ then the corresponding $s$ defined in Lemma~\ref{group} is either $1$ or $p$. By the previous Lemma, if $s=1$ then $V$ is isomorphic to $\mathcal{L}_n$, and if  $s=p$ then $V$ is isomorphic to $\mathcal{L}_{pn}$.
\end{proof}

\subsection{Subgroup Growth}
Now we compute the number of subgroups of given index of $\mathcal{L}_n$. First let us compute the number of submodules of $R^k$ of given finite codimension.
Define
\[
b_t(R^k)=|\{M\subset{R^k}:M\text{ is $R$ submodule and }\dim_{\mathbb{F}_p}R^k/M=t\}|.
\]
\begin{lemma}\label{submodulegrowth}
$b_t(R^k)=p^{tk}-p^{(t-1)k}$ for $t>0$, $b_0(R^k)=1$.
\end{lemma}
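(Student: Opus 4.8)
The plan is to count $R$-submodules $M \subset R^k$ with $\dim_{\mathbb{F}_p} R^k/M = t$ by decomposing according to the structure of the quotient. Since $R$ is a PID (the first Lemma) and $R^k/M$ is a finite $\mathbb{F}_p$-vector space, it is a torsion module, so by the structure theorem $R^k/M \cong \bigoplus_i R/q_i^{r_i}R$ with the $q_i$ irreducible. The $\mathbb{F}_p$-dimension of $R/gR$ for $g \in \mathbb{F}_p[x]$ with $g(0)\neq 0$ is $\deg g = \Deg g$, and more generally $\dim_{\mathbb{F}_p} R/gR = \Deg g$; so the codimension condition says $\Deg({\det}^* M) = t$ in the notation of Definition~\ref{dett}. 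The first step is therefore to reduce to counting, for each monic $f \in \mathbb{F}_p[x]$ with $f(0) = 1$ and $\Deg f = t$, the number of submodules $M$ with ${\det}^* M = f$ (equivalently with $R^k/M \cong R^k/fR^k$ up to the refinement by invariant factors), and then sum over $f$.

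The cleanest route, and the one I would actually carry out, is an induction on $t$ using a short exact sequence. First I would establish the total count $\sum_{t\geq 0} b_t(R^k) x^t$-type recursion directly: every codimension-$t$ submodule $M$ is contained in some maximal submodule $M'$ (codimension $\Deg q$ for an irreducible $q$, by Lemma~\ref{maxmatrix}), and conversely one can count pairs $(M, M')$ with $M \subset M'$ maximal. But a slicker approach avoids choosing a prime: observe that $b_t(R^k)$ only depends on $R^k$ as an $R$-module, and use the filtration of $R^k/M$ by $x^j$-images, or better, count submodules $M \supseteq xR^k \cdot(\text{something})$. Concretely, I would prove the formula by relating $b_t(R^k)$ to submodules of the \emph{finite} module $R^k/x^t R^k \cong (\mathbb{F}_p[x]/x^t)^k$ — wait, that is not quite right since $x$ is a unit in $R$. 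Instead, pick any irreducible $q$ of degree $1$, say $q = 1-x$ (degree $1$, $q(0)=1$); then any $M$ of codimension $t$ contains $q^t R^k$ is \emph{false} in general, so this also fails.

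So the honest approach is the generating-function / Hall-algebra count over all primes. Let me instead do the direct bijective count that I expect works: a submodule $M$ of codimension $t$ in $R^k$ is the same as a surjection $R^k \twoheadrightarrow Q$ onto a module $Q$ of $\mathbb{F}_p$-dimension $t$, modulo automorphisms of $Q$; equivalently $M = g R^k$ is \emph{not} always principal, but $M$ always contains $fR^k$ where $f = {\det}^* M$, and $M/fR^k \subset R^k/fR^k \cong (R/fR)^k$. Since $\dim_{\mathbb{F}_p} R/fR = t = \dim R^k/M$, we get $\dim M/fR^k = (k-1)t$, i.e. $M/fR^k$ is a submodule of the free $(R/fR)$-module $(R/fR)^k$ of corank $1$ over $R/fR$ — that is, $M/fR^k$ corresponds, after identifying $R/fR$-module structure, to a point of a "Grassmannian" of hyperplanes through a chosen lattice position. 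Counting these over the local ring $R/fR$ and summing over all $f$ with $\Deg f = t$ telescopes to $p^{tk} - p^{(t-1)k}$.

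\medskip

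\emph{What I would actually write:} Induct on $k$. For $k=1$, $b_t(R) = $ (number of ideals of codimension $t$) $ = $ (number of monic $f \in \mathbb{F}_p[x]$, $f(0)=1$, $\deg f = t$) $= p^{t-1}$ for $t \geq 1$ (choose $t-1$ coefficients freely, leading coeff $1$, constant coeff $1$), matching $p^{t} - p^{t-1}$. For the inductive step, split a codimension-$t$ submodule $M \subset R^k = R^{k-1} \oplus R$ by its projection to the last coordinate: the image is an ideal $fR$ of $R$ with $\Deg f = t' \leq t$, and the kernel of $M \to R/(\ker) $ contributes a codimension-$(t-t')$ submodule of $R^{k-1}$; the extension data (a coset) is counted by $|R/fR|^{k-1} = p^{t'(k-1)}$ choices since $R/fR$ has order $p^{t'}$. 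This gives the recursion
\[
b_t(R^k) = \sum_{t'=0}^{t} p^{t'(k-1)} \, (p^{t'} - p^{t'-1}) \, b_{t-t'}(R^{k-1}),
\]
with the convention that the $t'=0$ term is $b_t(R^{k-1})$. Plugging in $b_j(R^{k-1}) = p^{j(k-1)} - p^{(j-1)(k-1)}$ and summing the resulting geometric-type series yields $p^{tk} - p^{(t-1)k}$.

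\medskip

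The main obstacle will be making the splitting argument in the inductive step genuinely bijective rather than merely an inequality: one must check that every pair (submodule of $R^{k-1}$ of the right codimension, ideal $fR$, coset representative in $(R/fR)^{k-1}$) arises from exactly one $M$, i.e. that the short exact sequence $0 \to M \cap R^{k-1} \to M \to fR \to 0$ reconstructs $M$ uniquely from this data — this uses that $fR \cong R$ is free so the sequence splits and $M = (M\cap R^{k-1}) + R\cdot(\text{a lift of a generator})$, but the lift is only well-defined modulo $M \cap R^{k-1} \supseteq f R^{k-1}$, whence the $p^{t'(k-1)}$ count (lifts modulo $fR^{k-1}$, then further modulo $(M\cap R^{k-1})/fR^{k-1}$ — one must verify this extra quotient does not over- or under-count, which works out because we are counting $M$ directly, not ordered data). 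Once the recursion is correctly set up, verifying the closed form is a routine induction: assume it for $k-1$, substitute, and evaluate $\sum_{t'} p^{t'(k-1)}(p^{t'}-p^{t'-1})(p^{(t-t')(k-1)} - p^{(t-t'-1)(k-1)})$ plus boundary terms, which collapses.
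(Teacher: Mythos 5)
Your final recursion-plus-induction skeleton can be made to work, but as written it has two genuine errors. First, the base case $k=1$ is miscounted: an ideal of codimension $t$ in $R$ is $fR$ with $f\in\mathbb{F}_p[x]$, $\deg f=t$, and you may normalize \emph{either} $f(0)=1$ \emph{or} $f$ monic, but not both — the two normalizations cannot be imposed simultaneously, and once one is imposed the generator is unique. So the count is $(p-1)p^{t-1}=p^t-p^{t-1}$ (this is exactly the quantity $\phi(t)$ in the paper), not $p^{t-1}$; your assertion that $p^{t-1}$ ``matches $p^t-p^{t-1}$'' is true only for $p=2$. (It also contradicts the weight $p^{t'}-p^{t'-1}$ that you yourself use for the number of ideals inside the recursion.)

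Second, in the inductive step the count of the ``extension data'' is not justified for the decomposition you describe. If you split $M\subset R^{k-1}\oplus R$ by the \emph{image} $fR$ of the projection to the last coordinate and the kernel $N=M\cap R^{k-1}$ (of codimension $t-t'$), then the set of admissible lifts $w_0$ with $(w_0,f)\in M$ is a single coset of $N$, so the number of $M$ with prescribed $(f,N)$ is $[R^{k-1}:N]=p^{t-t'}$, not $|R/fR|^{k-1}=p^{t'(k-1)}$; moreover the containment $fR^{k-1}\subseteq M\cap R^{k-1}$ that you invoke is false in general (take $k=2$, $M=gR\times\{0\}+R\,(0,f)$ with $g\nmid f$), so the phrase ``this extra quotient does not over- or under-count, which works out'' is precisely where the argument fails: term by term it over- and under-counts. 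Curiously, the recursion you wrote down is nevertheless numerically correct, because it is the recursion produced by the \emph{dual} decomposition: set $I=M\cap(0\oplus R)=fR$ and let $N'$ be the image of $M$ in $R^{k-1}$ (free of rank $k-1$, codimension $t-t'$); then $M$ is determined by $(N',f)$ together with an $R$-homomorphism $N'\to R/fR$, of which there are exactly $p^{t'(k-1)}$. Either repaired recursion (factor $p^{t-t'}$ for your decomposition, factor $p^{t'(k-1)}$ for the dual one) has generating function $(1-z)/(1-p^kz)$ in the variable marking codimension and yields $b_t(R^k)=p^{tk}-p^{(t-1)k}$. Note that the paper proceeds differently: it lists Hermite-normal-form representatives $gR^k$ with triangular $g$, diagonal entries normalized by $g_{ii}(0)=1$ and bounded off-diagonal degrees, and evaluates the cumulative sum $\sum_{t'\le t}b_{t'}=p^{kt}$ by a telescoping summation; your (corrected) induction on $k$ amounts to peeling off one column of that normal form.
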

\begin{proof}
We use a similar method that was used in the case of $\mathbb{Z}^k$ for counting subgroup growth (see for instance \cite{LS}). It is based on consideration of endomorphisms of $R^k$. Since $R$ is principal ideal domain, every submodule of $R^k$ is isomorphic to $R^s$, $s\leq{}k$, and if it is of finite index, then $s=k$. Therefore, it is equal to $gR^k$, for some $g\in{}M_k(R)$, $\det(g)\neq{}0$. Moreover, the index of $gR^k$ in $R^k$ is equal to the number of elements in $R/(\det{g})$. Also, $g_1R^k=g_2R^k$ if and only if $g_2^{-1}g_1\in{}GL_k(R)$. It is easy to see that the orbit representatives of the action by $GL_k(R)$ by the multiplication from the right on the set $\{g\in{}M_k(R)|\det(g)\neq{}0\}$ are lower triangular matrices $(g_{ij})$ such that $g_{ij}\in{}\mathbb{F}_p[x]$ for all $i,j$, $g_{ii}(0)=1$ for all $i$, $g_{ij}=0$ for $i<j$, and the degree of $g_{ij}$ is less than $g_{jj}$ for all $i,j$. 

Introduce function $\phi$ by $\phi(s)=(p-1)p^{s-1}$ if $s\geq 1$ and $\phi(0)=1$. Note that $\sum_{s=1}^n\phi(s)=p^n$ and that the number of polynomials $g\in{}F_p[x]$ such that $g(0)=1$ and the degree of $g$ is $s$ is equal to $\phi(s)$. Then the number of all submodules of codimension $t$ (that is, of index $p^t$) of $R^k$ is equal to the number of the lower triangular matrices described above with sum of degrees $\sum_i\deg(g_{ii})=t$, that is to
\[
\sum_{x_1+\dots+x_k=t}\phi(x_1)\dots\phi(x_k)p^{x_2+\dots+(k-1)x_k}.
\] 
To compute it we first compute 
\begin{equation}
\begin{aligned}
&\sum_{x_1+\ldots+x_k\leq{}t}\phi(x_1)\cdots\phi(x_k)p^{x_2+\dots+(k-1)x_k}=\\&\sum_{x_2+\dots+x_k\leq{}t}\phi(x_2)\cdots\phi(x_k)p^{x_2+\dots+(k-1)x_k}\sum_{x_1=0}^{t-(x_2+\dots+x_k)}{\phi(x_1)}=\\& p^t\sum_{x_2+\dots+x_k\leq{}t}\phi(x_2)\cdots\phi(x_k)p^{x_3+\dots+(k-2)x_k}=\dots=p^{kt},
\end{aligned}
\end{equation}
therefore $b_t(R^k)=p^{tk}-p^{(t-1)k}$
   
\end{proof}
\begin{proposition}\label{3_6}
Let $a_m(\mathcal{L}_n)$ be the number of subgroups of $\mathcal{L}_n$ of index $m$. Then
\[
a_m(\mathcal{L}_n)=\sum_{t:p^t|m}b_t(R^{{\frac{nm}{p^t}}})p^t=\sum_{t\geq{1}:p^t|m}(p^{\frac{nmt}{p^t}}-p^{\frac{nm(t-1)}{p^t}})p^t+1.
\]
\end{proposition}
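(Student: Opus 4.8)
The plan is to read off the index $[\mathcal{L}_n:V]$ from the triple $(s,V_0,v)$ attached to $V$ by Lemma~\ref{3_1}, reduce the enumeration to a submodule count, and then invoke Lemma~\ref{submodulegrowth}. By Lemma~\ref{3_1} (together with the Remark following it) subgroups of $\mathcal{L}_n$ are in bijection with equivalence classes of triples $(s,V_0,v)$, where $(s,V_0,v)\sim(s',V_0',v')$ iff $s=s'$, $V_0=V_0'$ and $v+V_0=v'+V_0$. Since $\mathcal{A}_n$ is normal in $\mathcal{L}_n$ with $\mathcal{L}_n/\mathcal{A}_n\cong\mathbb{Z}$, the image of $V$ in $\mathbb{Z}$ is $s\mathbb{Z}$ and $V\cap\mathcal{A}_n=V_0$, so the index factors as
\[
[\mathcal{L}_n:V]=[\mathbb{Z}:s\mathbb{Z}]\cdot[\mathcal{A}_n:V_0].
\]
In particular $V$ has finite index exactly when $s>0$ and $[\mathcal{A}_n:V_0]<\infty$; and because $\mathcal{A}_n$ has exponent $p$, the finite quotient $\mathcal{A}_n/V_0$ is elementary abelian, whence $[\mathcal{A}_n:V_0]=p^t$ with $t=\dim_{\mathbb{F}_p}\mathcal{A}_n/V_0$. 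Thus $[\mathcal{L}_n:V]=m$ forces $s\,p^t=m$, so that $t$ ranges over those $t\geq 0$ with $p^t\mid m$, and then $s=m/p^t$ is determined.

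Next I would count, for each such $t$, the equivalence classes with $s=m/p^t$ and $[\mathcal{A}_n:V_0]=p^t$. Fixing $s$, the condition $x^sV_0=V_0$ from Lemma~\ref{3_1} says exactly that $V_0$ is an $R$-submodule of $\mathcal{A}_n^s$, which by Lemma~\ref{mod_structure} is isomorphic to $R^{ns}=R^{nm/p^t}$; and $[\mathcal{A}_n:V_0]=p^t$ translates into $\dim_{\mathbb{F}_p}R^{nm/p^t}/V_0=t$. So the number of admissible $V_0$ is $b_t(R^{nm/p^t})$. For each fixed pair $(s,V_0)$, Lemma~\ref{3_1} gives a bijection between the subgroups with that projection and that intersection and the cosets $v+V_0$ of $\mathcal{A}_n/V_0$ (every coset occurs and distinct cosets give distinct subgroups), of which there are $[\mathcal{A}_n:V_0]=p^t$. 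Summing the products over $t$ yields
\[
a_m(\mathcal{L}_n)=\sum_{t:\,p^t\mid m}b_t\!\left(R^{nm/p^t}\right)p^t.
\]

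To finish I would substitute Lemma~\ref{submodulegrowth}: the term $t=0$ equals $b_0(R^{nm})\cdot 1=1$, while for $t\geq 1$, writing $k=nm/p^t$, it equals $\bigl(p^{tk}-p^{(t-1)k}\bigr)p^t=\bigl(p^{nmt/p^t}-p^{nm(t-1)/p^t}\bigr)p^t$, giving the second displayed formula in the statement. I expect no serious obstacle: the only points needing a little care are the multiplicativity of the index — in particular that $s=0$ yields infinite index, so that $s>0$ is forced — and the fact, contained in Lemma~\ref{3_1}, that the passage from triples to subgroups is exactly $p^t$-to-one once $(s,V_0)$ is fixed. Everything else is a matter of assembling results already established.
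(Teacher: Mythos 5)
Your argument is correct and follows essentially the same route as the paper's own (much terser) proof: read off the index as $s[\mathcal{A}_n:V_0]$ from the triple of Lemma~\ref{3_1}, count the admissible $V_0$ as $R$-submodules of $\mathcal{A}_n^s\cong R^{nm/p^t}$ via Lemma~\ref{mod_structure} and Lemma~\ref{submodulegrowth}, and multiply by the $[\mathcal{A}_n:V_0]=p^t$ choices of the coset $v+V_0$. Your write-up simply makes explicit the steps the paper leaves implicit, so no changes are needed.
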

\begin{proof}
If $V$ is the subgroup of $\mathcal{L}_n$ and $(s,V_0,v)$ is the corresponding triple, then its index is equal to $s[\mathcal{A}_n:V_0]$. The number of subgroups with fixed $s$ and $V_0$ is equal to number of coset representatives $v$, i.e. to the index $[\mathcal{A}_n:V_0]$. The formula now follows.
\end{proof}

\begin{proposition}\label{3_7}
Let $s_m(\mathcal{L}_n)=\sum_{1\leq{}d\leq{}m}a_d(\mathcal{L}_n)$. Then 
\[
s_m(\mathcal{L}_n)\geq{}p(p^{n[m/p]+1}-1),
\]
where $[m/p]$ is the integer part of $m/p$.
\end{proposition}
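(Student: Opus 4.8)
The plan is to bound $s_m(\mathcal{L}_n)=\sum_{1\le d\le m}a_d(\mathcal{L}_n)$ from below by isolating one well-chosen summand and estimating it through the explicit index formula of Proposition~\ref{3_6} and the submodule count of Lemma~\ref{submodulegrowth}. Set $K=[m/p]$, so that $pK\le m$ and hence $a_{pK}(\mathcal{L}_n)$ is one of the terms in the sum defining $s_m(\mathcal{L}_n)$. Since every $a_d(\mathcal{L}_n)$ is a nonnegative integer, $s_m(\mathcal{L}_n)\ge a_{pK}(\mathcal{L}_n)$, and more wastefully $s_m(\mathcal{L}_n)\ge\sum_{k=1}^{K}a_{pk}(\mathcal{L}_n)$ --- the latter is what I would retain if I wanted to squeeze the numerical constant as hard as possible.

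Now apply Proposition~\ref{3_6} to $d=pk$. Since $p\mid pk$, the sum $\sum_{t:\,p^{t}\mid d}b_t(R^{nd/p^{t}})p^{t}$ contains the term with $t=1$, equal to $b_1(R^{nk})\,p$; all terms being nonnegative, $a_{pk}(\mathcal{L}_n)\ge p\,b_1(R^{nk})$. (Group-theoretically this keeps only the subgroups with triple $(k,V_0,v)$ in which $V_0$ has $\mathbb{F}_p$-codimension $1$ in $\mathcal{A}_n^{k}\cong R^{nk}$, the dominant purely exponential family.) By Lemma~\ref{submodulegrowth}, $b_1(R^{nk})=p^{nk}-1$, so $a_{pk}(\mathcal{L}_n)\ge p(p^{nk}-1)$; with $k=K$ this already gives the clean estimate
\[
s_m(\mathcal{L}_n)\ \ge\ a_{pK}(\mathcal{L}_n)\ \ge\ p\bigl(p^{nK}-1\bigr)\ =\ p^{\,n[m/p]+1}-p .
\]
To obtain the form displayed in the Proposition I would instead keep the whole family from the first paragraph and add the remaining nonnegative contributions: by Lemma~\ref{submodulegrowth} again, $\sum_{k=1}^{K}a_{pk}(\mathcal{L}_n)\ge p\sum_{k=1}^{K}(p^{nk}-1)=p\cdot\frac{p^{n}(p^{nK}-1)}{p^{n}-1}-pK$, to which one can add the codimension-$0$ terms $\sum_{d\le m}1=m$ and the codimension-$\ge 2$ terms, then simplify the geometric sums and compare with $p(p^{n[m/p]+1}-1)$.

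The structural part of this is a one-line consequence of Proposition~\ref{3_6} and Lemma~\ref{submodulegrowth}; the one place that needs care --- and the main obstacle I anticipate --- is calibrating the constant. A single summand already produces $p(p^{n[m/p]}-1)$, while the full geometric series $p^{n}+p^{2n}+\dots+p^{nK}$ sums to less than $2p^{nK}$, so summing over $k$ cannot by itself contribute an extra factor of $p$ on the leading power. Pinning down the precise exponent in the displayed inequality (and, if it turns out to be needed, restricting to $m$ above a small threshold) therefore has to be checked directly against the explicit formula of Proposition~\ref{3_6} rather than read off the crude single-term bound.
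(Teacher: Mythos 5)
Your argument is, in substance, the paper's own proof: the paper establishes Proposition~\ref{3_7} by expanding $s_m(\mathcal{L}_n)$ exactly through Proposition~\ref{3_6} and Lemma~\ref{submodulegrowth}, writing $s_m(\mathcal{L}_n)=m+\sum_{t,k\ge 1,\ p^tk\le m}(p^{nkt}-p^{nk(t-1)})p^t$, and then isolating the low-codimension blocks --- precisely your ``codimension-one family'' $\sum_{k\le [m/p]}p\,(p^{nk}-1)$, with the $t\ge 2$ terms and the $m$ coming from $t=0$ thrown in as nonnegative extras. So there is no methodological difference, and what you actually prove, $s_m(\mathcal{L}_n)\ge \frac{p^{n+1}}{p^n-1}\bigl(p^{n[m/p]}-1\bigr)\ge p\bigl(p^{n[m/p]}-1\bigr)$, is exactly what the paper's computation yields.

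The obstacle you flag at the end is genuine, and it cannot be repaired: the displayed inequality of Proposition~\ref{3_7} is false as literally stated, so no calibration of the constant will produce the missing factor of (roughly) $p$. For example, for $p=3$, $n=1$, $m=9$, Proposition~\ref{3_6} gives $a_1=a_2=a_4=a_5=a_7=a_8=1$, $a_3=7$, $a_6=25$, $a_9=133$, hence $s_9=171$, while $p(p^{n[m/p]+1}-1)=3(3^4-1)=240$; similarly $p=2$, $n=2$, $m=6$ gives $s_6=216<254$, and for $m<p$ the bound fails already at $m=1$. Asymptotically, for $p\ge 3$ the exact formula grows like $\frac{p^{n+1}}{p^n-1}p^{n[m/p]}$, short of the claimed $p^{n[m/p]+2}$ by about a factor $p-1$; the stated constant does hold in the special case $p=2$, $n=1$, $m\ge 4$ (there the geometric ratio equals $p$ and the $t=2$ block contributes at the same order), but not in general. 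So the correct conclusion is the one you derived, $s_m(\mathcal{L}_n)\ge p\bigl(p^{n[m/p]}-1\bigr)$, i.e.\ $s_m(\mathcal{L}_n)\ge p^{n[m/p]+1}-p$, and this weaker form is all that the subsequent Remark needs: for $n=1$ it still gives growth of type $p^{m/p}$, improving the $p^{m/p^2}$-type lower bound of \cite{LS}. In short, your proof is the paper's proof of the provable statement; the discrepancy you were unwilling to paper over is an error in the proposition's displayed constant, not a gap in your argument.
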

\begin{proof}
First we write $s_m(\mathcal{L}_n)$ as
\begin{equation*}
\begin{aligned}
&s_m(\mathcal{L}_n)=\\&m+\sum_{t,k\geq{1}:p^tk\leq{m}}(p^{nkt}-p^{nk(t-1)})p^t=m+\sum_{t=1}^{[\log_pm]}p^t\sum_{k=1}^{[m/p^t]}(p^{nkt}-p^{nk(t-1)})=\\&
m-p\left[\frac{m}{p}\right]+\frac{p^{n+1}}{p^n-1}\left[p^{n[m/p]}-1\right]-\frac{p^{n+2}}{p^n-1}\left[p^{n[m/p^2]}-1\right]+\\& \frac{p^{2n+2}}{p^{2n}-1}\left[p^{2n[m/p^2]}-1\right]+
\sum_{t=3}^{[\log_pm]}p^t\sum_{k=1}^{[m/p^t]}(p^{nkt}-p^{nk(t-1)}),
\end{aligned}
\end{equation*}
where
\[
\sum_{t=3}^{[\log_pm]}p^t\sum_{k=1}^{[m/p^t]}(p^{nkt}-p^{nk(t-1)})\leq \frac{p^{3n}}{p^{3n}-1} p^{3(nm/p^3+1)}\log_pm
\]
\end{proof}
\begin{remark}
Note that for $n=1$ this bound gives a slight improvement over the bound in proposition 3.2.1 in \cite{LS}, where it was shown that $s_m(\mathcal{L}_1)>c^m$ for $c<p^{1/p^2}$ and big enough $m$. 
\end{remark}
Now we are going to consider the structure of maximal and weakly maximal subgroups of $\mathcal{L}_n$ (where by weakly maximal we understand those subgroups of infinite index such that any larger subgroup is of finite index).
\begin{theorem}\label{max}
Each maximal subgroup has finite index and is either of the form $\mathcal{A}_n\rtimes{q}\mathbb{Z}$ for some prime $q\in\mathbb{Z}$, in which case its index is $q$, or its corresponding triple is $(1,V_0,v)$ such that ${\det}^*V_0$ is irreducible polynomial, in which case its index is $p^{\deg({\det}^*V_0)}$. Maximal subgroups of the form $\mathcal{A}_n\rtimes q\bz$ are normal. Maximal subgroup with the triple $(1,V_0,v)$ is normal \Iff ${\det}^*V_0=1-x$.

\end{theorem}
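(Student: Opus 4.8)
The plan is to exploit the triple $(s,V_0,v)$ attached to a subgroup $V\le\mathcal{L}_n$ by Lemma~\ref{3_1} and to split the analysis according to the value of $s$. First I would dispose of $s=0$: then $V=V_0\subseteq\mathcal{A}_n$, but $\mathcal{A}_n\subsetneq\mathcal{A}_n\rtimes q\mathbb{Z}\subsetneq\mathcal{L}_n$ for any prime $q$, so such a $V$ cannot be maximal; hence $s\ge 1$. Next, the inclusion criterion in Lemma~\ref{3_1} shows that $V$ is contained in the subgroup $W$ with triple $(s,\mathcal{A}_n,0)$, i.e. $W=\mathcal{A}_n\rtimes s\mathbb{Z}$. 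Maximality of $V$ then leaves two possibilities: either $W=\mathcal{L}_n$, which forces $s=1$; or $V=W=\mathcal{A}_n\rtimes s\mathbb{Z}$, and since $\mathcal{A}_n\rtimes s\mathbb{Z}\subseteq\mathcal{A}_n\rtimes d\mathbb{Z}$ whenever $d\mid s$, maximality forces $s$ to be a prime $q$. This produces the first family, of index $[\mathbb{Z}:q\mathbb{Z}]=q$, and normality is clear since $\mathcal{A}_n\rtimes q\mathbb{Z}=\ker\!\big(\mathcal{L}_n\to\mathbb{Z}\to\mathbb{Z}/q\mathbb{Z}\big)$ (or directly from Lemma~\ref{normal_group}).

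It remains to treat $s=1$. Here $V_0$ is already an $R$-submodule of $\mathcal{A}_n\cong R^n$, because $x^1V_0=V_0$. Using the inclusion criterion of Lemma~\ref{3_1} together with its uniqueness statement, I would check that $V'\mapsto V'\cap\mathcal{A}_n$ is an order isomorphism from $\{V':V\subseteq V'\subseteq\mathcal{L}_n\}$ onto the interval of $R$-submodules $\{V_0':V_0\subseteq V_0'\subseteq\mathcal{A}_n\}$, carrying $\mathcal{L}_n$ to $\mathcal{A}_n$ and $V$ to $V_0$ (every such $V'$ inherits $s'=1$ and may be written with the same $v$). Consequently $V$ is maximal iff $V_0$ is a maximal proper $R$-submodule of $R^n$. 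By the structure theorem (equivalently Lemma~\ref{maxmatrix} and the discussion after Definition~\ref{dett}) this happens exactly when $R^n/V_0\cong R/qR$ for an irreducible $q$ with $q(0)=1$; then ${\det}^{*}V_0=q$ is irreducible and, since $q(0)\neq 0$,
\[
[\mathcal{L}_n:V]=[\mathcal{A}_n:V_0]=p^{\dim_{\mathbb{F}_p}R/qR}=p^{\Deg q}=p^{\deg q}=p^{\deg({\det}^{*}V_0)}.
\]
Finally I would read off normality from Lemma~\ref{normal_group}: the condition $xV_0=V_0$ is automatic; $(1-x)\mathcal{A}_n\subseteq V_0$ holds iff $1-x$ annihilates $R^n/V_0\cong R/qR$, i.e. iff $q\mid(1-x)$, i.e. iff $q=1-x$ (both being irreducible of value $1$ at $0$); and in that case $(1-x)v\in(1-x)\mathcal{A}_n\subseteq V_0$ is automatic. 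Hence the maximal subgroup with triple $(1,V_0,v)$ is normal iff ${\det}^{*}V_0=1-x$.

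The computations are routine applications of Lemmas~\ref{3_1} and~\ref{normal_group}; the two points that deserve care are (i) verifying that the interval above is genuinely order-isomorphic to the submodule interval when $s=1$ — in particular that each intermediate subgroup has $s'=1$ and can be normalized to share the same $v$, and that a maximal submodule of $R^n$ is automatically of finite index, so its quotient is indeed $R/qR$ — and (ii) the elementary divisibility fact that, among irreducible polynomials with value $1$ at $0$, only $1-x$ divides $1-x$, which is what pins down the normal members of the second family. That every maximal subgroup has finite index then follows a posteriori, both families consisting of finite-index subgroups.
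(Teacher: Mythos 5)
Your proof is correct and follows essentially the same route as the paper: a case analysis on $s$ from the triple of Lemma~\ref{3_1} (discarding $s=0$, getting $\mathcal{A}_n\rtimes q\mathbb{Z}$ for $s>1$), reduction of the $s=1$ case to maximal $R$-submodules of $\mathcal{A}_n\cong R^n$ with the index computed from the quotient, and normality read off from Lemma~\ref{normal_group}. The only cosmetic difference is that you pass through the explicit interval isomorphism and the structure theorem (simple quotients $R/qR$) where the paper writes $V_0=gR^n$ and cites Lemma~\ref{maxmatrix}; the substance is the same.
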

\begin{proof}
Suppose $V$ is a maximal subgroup and $(s,V_0,v)$ is the corresponding triple. Then there are three possible cases. If $s=0$ then $V=V_0\subset\mathcal{A}_n$, and since it is maximal, it must be equal to $\mathcal{A}_n$. However, $\mathcal{A}_n$ is not maximal, as it is contained in, say, $\mathcal{A}_n\rtimes{2}\mathbb{Z}$.

If $s>1$ then, by maximality $V_0=\mathcal{A}_n$, thus $v$ can be taken to be $0$, therefore the group is $\mathcal{A}_n\rtimes{s}\mathbb{Z}$. If $q$ is any prime divisor of $s$ then $\mathcal{A}_n\rtimes{s}\mathbb{Z}$ is contained in $\mathcal{A}_n\rtimes{q}\mathbb{Z}$, therefore for maximal subgroups $s$ is prime. On the other hand if a subgroup with triple $(q,\mathcal{A}_n,0)$ is contained in the nontrivial subgroup with triple $(t,W,w)$ then clearly $W=\mathcal{A}_n$ and $s$ divides $q$. Since $q$ is prime, $s=q$ or $s=1$. By nontriviality $s=q$ is the only choice. Thus $\mathcal{A}_n\rtimes{q}\mathbb{Z}$ is maximal for any prime $q$. It is clear that its index is equal to $(\mathbb{Z}:q\mathbb{Z})=q$.

Finally let $s=1$ and $(1,V_0,v)$ be the corresponding triple for a maximal subgroup $V$. By the lemma above $x{}V_0=V_0$. By considering $\mathcal{A}_n$ as a module over $R$ isomorphic to $R^n$ we obtain that $V_0$ is its submodule, and therefore isomorphic to $gR^n$ for some $g\in{}M_n(R)$. Moreover $V_0$ is maximal submodule, by the maximality of the subgroup $V$. By Lemma \ref{maxmatrix}, it happens \Iff $\det g$ is irreducible. Notice then that ${\det}^*V_0=u\det(g)$ for some invertibel element $u\in R$. In this case the index of $V$ in $\mathcal{L}_n$ is equal to the index of $V_0$ in $\mathcal{A}_n$, which is equal to $p^{\Deg(\det g)}=p^{\deg({\det}^*V_0)}$ by the corollary after Lemma \ref{decomp}.

It is obvious that $\mathcal{A}_n\rtimes q\bz$ are normal. To consider when a subgroup with the triple $(1,V_0,v)$ such that $V_0$ is isomorphic to $gR^n$ is normal, let $g=adb$ be the product from Lemma \ref{decomp}. Then ${\det}^*V_0=d_1\cdots d_n$, and it is irreducible iff for some $i$ $d_i$ is irreducible and all other $d_j=1$. The group with a triple $(1,gR^n,v)$ is normal iff $(1-x)R^n\subset gR^n$, since $(1-x)v\in gR^n$ follows from it. But since $gR^n=adbR^n=adR^n$, the condition $(1-x)R^n\subset gR^n$ is equivalent to the condition $(1-x)R^n\subset dR^n$, that is $(1-x)R\subset d_k R$ for all $k$, hence each $d_k$ must divide $x-1$. Therefore the maximal subgroup with the triple $(1,gR^n,v)$ is normal iff $\det(g)=u(x-1)$ for some invertible element $u$ of $R$. 
\end{proof}

\begin{theorem}\label{3_9}
Let $V$ be a subgroup of $\mathcal{L}_n$, and $(s,V_0,v)$ be its triple. Then $V$ is weakly maximal if and only if either $s=0$ and $V_0=\mathcal{A}_n$, or $s>0$ and the factormodule $\mathcal{A}_n/V_0$ is isomorphic to $R$ as $R$ modules under the identification of $\mathcal{A}_n$ and $R^{ns}$ from Lemma \ref{mod_structure}.  Such weakly maximal subgroup $V$ is closed in pro-$p$ topology \Iff $s=0$ or $s$ is a power of $p$.
\end{theorem}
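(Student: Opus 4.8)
The plan is to handle the two assertions of the theorem separately, always working through the triple $(s,V_0,v)$ of Lemma~\ref{group} and its inclusion criterion. The case $s=0$ of the first assertion is immediate: then $V=V_0\subseteq\mathcal A_n$, and if $V_0\subsetneq\mathcal A_n$ the chain $V_0\subsetneq\mathcal A_n\subsetneq\mathcal L_n$ exhibits a proper overgroup of infinite index, so $V$ is not weakly maximal, whereas if $V_0=\mathcal A_n$ then by Lemma~\ref{group} every proper overgroup is $\mathcal A_n\rtimes r\mathbb Z$ for some $r\geq 1$, of finite index $r$, so $\mathcal A_n$ is weakly maximal. Assume now $s>0$. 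If $V$ is weakly maximal, apply the inclusion criterion to the overgroups of the form $(s,W_0,v)$ with $W_0\supseteq V_0$ an $R$-submodule of $\mathcal A_n^s\cong R^{ns}$ (Lemma~\ref{mod_structure}): their index is $[\mathcal A_n:W_0]$, so every nonzero submodule of the finitely generated $R$-module $N:=\mathcal A_n^s/V_0$ must have finite $\mathbb F_p$-codimension. Over the PID $R$ the structure theorem then forces $N\cong R$: the free rank of $N$ must be exactly $1$, since free rank $0$ would make $V$ of finite index, while free rank $\geq 2$, or a nonzero torsion part, would produce a nonzero submodule of infinite codimension.

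Conversely, suppose $s>0$ and $\mathcal A_n^s/V_0\cong R$; then $V$ has infinite index and I must show every $W\supsetneq V$ has finite index. Write the triple of $W$ as $(t,W_0,w)$; by Lemma~\ref{group}, $t\mid s$ and $V_0\subseteq W_0$, so $W_0$ is $x^t$-invariant, hence $x^s$-invariant, hence a submodule of $\mathcal A_n^s$ containing $V_0$. If $W_0\supsetneq V_0$ then $W_0/V_0$ is a nonzero submodule of $R$, so of finite codimension, and $W$ has finite index. If $W_0=V_0$, then $V_0$ is $x^t$-invariant, and I compare $\mathcal A_n^s/V_0$ with $\mathcal A_n^t/V_0$ by restriction of scalars along the ring map $R\to R$, $x\mapsto x^{s/t}$: since $R$ is free of rank $s/t$ over $\mathbb F_p[x^{s/t},x^{-s/t}]$, this restriction multiplies the free rank of a finitely generated $R$-module by $s/t$; as $\mathcal A_n^s/V_0\cong R$ has free rank $1$ we get $s/t=1$, i.e. $W=V$, a contradiction. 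Hence every proper overgroup is of finite index and $V$ is weakly maximal. This restriction-of-scalars bookkeeping, already visible in the Proposition that follows Definition~\ref{ddet}, is the one non-routine ingredient and it will recur below.

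For the pro-$p$ statement I use the criterion of Proposition~\ref{pro2}. If $s=0$, then $V=\mathcal A_n$ is the intersection of the subnormal $p$-chain with triples $(p^{i-1},\mathcal A_n,0)$, $i\geq 1$ --- consecutive indices are $p$ and each term is normal in the previous by a direct computation --- so $V$ is pro-$p$ closed. If $s=p^l$: first descend $\mathcal L_n$ to $W:=\mathcal A_n\rtimes p^l\mathbb Z$ along the length-$l$ subnormal $p$-chain with triples $(p^i,\mathcal A_n,0)$, $0\leq i\leq l$. Inside $W\cong\mathcal L_{np^l}$ the subgroup $V$ has projection index $1$, and since $\mathcal A_n^{p^l}/V_0\cong R$ is free, $V_0$ is a corank-one free direct summand of $\mathcal A_n^{p^l}\cong R^{np^l}$; after an automorphism of $W$ (a change of $R$-basis of $\mathcal A_n$, followed by a deformation automorphism as in the proof of Corollary~\ref{3_4} that kills $v$) I may assume $V=V_0\rtimes\mathbb Z$ with $V_0=0\oplus R^{np^l-1}$. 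Now set $H_i:=W_0^{(i)}\rtimes\mathbb Z$ where $W_0^{(i)}/V_0=(1-x)^{i-1}R\subseteq R\cong\mathcal A_n^{p^l}/V_0$: consecutive indices are $p$; $H_{i+1}\trianglelefteq H_i$ because the conjugation condition reduces to $(1-x)W_0^{(i)}\subseteq W_0^{(i+1)}$, i.e. $(1-x)^i\mid(1-x)(1-x)^{i-1}$; and $\deg(1-x)^{i-1}\to\infty$ gives $\bigcap_i W_0^{(i)}=V_0$, hence $\bigcap_i H_i=V$. Concatenating with the chain from $\mathcal L_n$ to $W$ and invoking Proposition~\ref{pro2} shows $V$ is pro-$p$ closed.

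Finally, suppose $s>0$ and $V=\bigcap_i H_i$ for a subnormal $p$-chain with all indices $p$. Each $H_i$ has index $p^{i-1}$; writing its triple $(s_i,(H_i)_0,w_i)$ and using that the index equals $s_i\,[\mathcal A_n:(H_i)_0]$, we see $s_i$ is a power of $p$. By Lemma~\ref{group} the $s_i$ divide $s$ and form an increasing (in divisibility) sequence, so they stabilize at some $\tilde s$, a power of $p$ dividing $s$; moreover $\bigcap_i(H_i)_0=V_0$, an intersection of $x^{\tilde s}$-invariant groups, so $V_0$ is $x^{\tilde s}$-invariant and hence a submodule of $\mathcal A_n^{\tilde s}$. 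Comparing $\mathcal A_n^s/V_0$ with $\mathcal A_n^{\tilde s}/V_0$ by restriction of scalars exactly as above, and using that $\mathcal A_n^s/V_0\cong R$ has free rank $1$, forces $s/\tilde s=1$, so $s=\tilde s$ is a power of $p$. I expect this last direction to require the most care: one must keep straight that weak maximality pins down $\mathcal A_n^s/V_0\cong R$, that the finite-index constraint forces each $s_i$ (hence $\tilde s$) to be a power of $p$, and that free rank $1$ then obstructs any nontrivial ratio $s/\tilde s$; checking normality of the chain $\{H_i\}$ in the $s=p^l$ construction is the other point that needs attention.
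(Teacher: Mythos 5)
Your proof is correct and follows essentially the same route as the paper: the same triple-based reduction showing that weak maximality (for $s>0$) is equivalent to $\mathcal{A}_n^s/V_0\cong R$, and the same use of Proposition~\ref{pro2}, building a subnormal $p$-chain from $\mathcal{A}_n\rtimes p^j\mathbb{Z}$ together with preimages of the powers of $(1-x)$ in $\mathcal{A}_n/V_0\cong R$. The only local differences are minor: you replace the paper's small lemma (that $M_\pi\cong R$ forces $k=1$) by an equivalent restriction-of-scalars free-rank count, you insert an unnecessary but harmless normalization of $(V_0,v)$ by automorphisms of $\mathcal{A}_n\rtimes p^l\mathbb{Z}$, and your choice of $(1-x)^i$ for the chain is the right one (the paper writes $(1+x)^i$, which gives subnormality only for $p=2$).
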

\begin{proof}
First let us prove that $\mathcal{A}_n$ is weakly maximal. Indeed, it is of infinite index and any larger subgroup must have nontrivial projection onto $\mathbb{Z}$, i.e. its corresponding triple is $(s, \mathcal{A}_n,0)$, with $s\geq{}1$ and hence has finite index $s$. 

Suppose $V$ is a weakly maximal subgroup and let $(s,V_0,v)$ be the corresponding triple. If $s=0$ then $V=V_0\subset\mathcal{A}_n$, and since $V$ is weakly maximal we have equality $V=\mathcal{A}_n$. If $s\geq{}1$ then $\mathcal{A}_n^s/V_0$ is isomorphic as $R$ module to $R/(g_1)\oplus\dots\oplus{}R/(g_k)\oplus{}R^t$, and by weak maximality of $V$ it follows that $k=0$ and $t=1$. Conversely, suppose that $V$ is such a subgroup that $\mathcal{A}_n^s/V_0$ is isomorphic to $R$ as modules over $R$. Suppose it is not weakly maximal, i.e. there is a subgroup $V'$ of infinite index that contains $V$. Let $(s',V'_0,v')$ be the corresponding triple for $V'$. Then $s'$ divides $s$ and $V_0\subset{}V'_0$. Since $s$ is a multiple of $s'$ we have that $x^sV'_0=V'_0$, thus $V'_0$ is an $R$ submodule of $\mathcal{A}_n^s$ that contains $V_0$. Then either $V'_0$ is equal to $V_0$ or $\mathcal{A}_n^s/V'_0=R/(g')$ for some nonzero $g'$. But in the latter case $V'_0$ is of finite index in $\mathcal{A}_n$ and therefore $V'$ is of finite index in $\mathcal{L}_n$, which contradicts our assumption. Thus $V'_0=V_0$. It is left to prove that $s=s'$. Let $s=s'k$. Let $M$ be the factor $\mathcal{A}_n/V_0$. Since $x^{s'}V_0=V_0$, we may consider $M$ as a module over $R$, with multiplication by $x$ given by the action of $x^{s'}$. To show that $k=1$ and therefore $s=s'$ it is left to prove the following lemma:

\begin{lemma}
Let $M$ be the module over $R$, and let $\pi:R\rightarrow{}R$ be the ring homomorphism  given by $\pi(x)=x^k$. Consider new $R$ module $M_\pi$ with the same underlying space as $M$ and with the action of $R$ given by $r*m=\pi(r)m$. Suppose that $M_\pi$ is isomorphic to $R$ as modules over $R$. Then $k=1$.
\end{lemma} 
\begin{proof}
Since $M_\pi$ is isomorphic to $R$ as modules over $R$ there is $m\in{}M$ such that the map, $r\mapsto\pi(r)m$ is bijective. Consider now map $\eta:R\rightarrow{}M$, $r\mapsto{}rm$, and let $R'=\pi(R)\subset{}R$. Then $\eta$ is linear over $\mathbb{F}_p$ and its restriction to $R'$ is bijective. It follows that $R=R'$ and thus $k=1$.
\end{proof}
Note that we have actually shown that if $V$ is weakly maximal and the corresponding $s$ is nonzero, then it is the smallest positive integer such that $V_0$ is invariant under multiplication by $x^s$.

Now let $V$ be a weakly maximal subgroup which is closed in pro-$p$ topology, and suppose $s>0$. By the Proposition \ref{pro2} there is a descending subnormal $p$-chain $\{H_i\}$ such that $V=\cap_i H_i$, and let $(s_i,V_i,v_i)$ be a triple associated to $H_i$. Then $s_i|s_{i+1}|s$ for all $i$, hence the sequence $s_i$ must stabilize. Thus $s_i=p^t$ for some $t$ and all big enough $i$. Since $V=\cap_i H_i$, we obtain $V_0=V\cap\mathcal{A}_n=\cap_i V_i$. The fact that $x^{p^t}V_i=V_i$ for all $i$ implies that $x^{p^t}V_0=V_0$, and since $s$ is smallest such integer we must have $s=p^t$.

Conversely, suppose that $V$ is weakly maximal and $s=p^t$, then $V\subset\mathcal{A}_n\rtimes p^t\mathbb{Z}$. There is a subnormal $p$-chain $\mathcal{A}_n\rtimes p^j\mathbb{Z}$, $0\leq j\leq t$ between $\mathcal{A}_n\rtimes p^t\mathbb{Z}$ and $\mathcal{L}_n$, thus it is left to construct the subnormal $p$-chain between $V$ and $\mathcal{A}_n\rtimes p^t\mathbb{Z}$. Identifying $\mathcal{A}_n\rtimes p^t\mathbb{Z}$ with $\mathcal{L}_{np^t}$ we reduce to the case $t=0$. Now, since $V$ is weakly maximal, $\mathcal{A}_n/V_0$ is isomorphic to $R$ as $R$ modules, and denote the corresponding map $\mathcal{A}_n\rightarrow R$ by $\psi$. Then $V_i=\psi^{-1}{((1+x)^iR)}$ form a $p$-chain between $V_0$ and $\mathcal{A}_n$, and hence groups with triples $(1,V_i,v)$ form subnormal $p$-chain between $V$ and $\mathcal{L}_n$, where $v$ is a vector from a triple of $V$. Hence there is a $p$-chain that intersects to $V$ and thus $V$ is closed in pro-$p$ topology by the Proposition~\ref{pro2}. 
\end{proof}

We recall the following definition.
\begin{definition}(see \cite{Grig})
The group is called hereditary free from finite normal subgroups if any subgroups of it of finite index have only infinite nontrivial normal subgroups.
\end{definition}
\begin{corollary}\label{hered}
Any nontrivial normal subgroup of $\mathcal{L}_n$ is infinite. Therefore, $\mathcal{L}_n$ is hereditary free from finite normal subgroups.  
\end{corollary}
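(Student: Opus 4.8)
The plan is to dichotomize on the integer $s$ in the triple $(s,V_0,v)$ attached to a nontrivial normal subgroup $V\leq\mathcal{L}_n$ by Lemmas~\ref{group} and \ref{normal_group}, handle each case, and then deduce the hereditary clause from Theorem~\ref{3_3}. Note first that $\mathcal{L}_n$ is itself infinite, since it contains $\mathcal{A}_n\cong R^n$.

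If $s>0$, Corollary~\ref{one_more_cor} immediately gives that $V$ has finite index in $\mathcal{L}_n$, hence $V$ is infinite. If $s=0$, then $V=V_0$ is a nontrivial subgroup of $\mathcal{A}_n$, and Lemma~\ref{normal_group} gives $xV_0=V_0$; identifying $\mathcal{A}_n$ with $R^n$ as a left $R$-module, this says $V_0$ is a nonzero $R$-submodule of $R^n$. Picking any nonzero $w\in V_0$, the cyclic submodule $Rw\subseteq V_0$ is isomorphic to $R/\mathrm{Ann}_R(w)$, and since $R^n$ is free, hence torsion-free over the domain $R$, we get $\mathrm{Ann}_R(w)=0$, so $Rw\cong R$ is infinite; therefore $V_0$ is infinite. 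This settles the first assertion.

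For the second assertion, let $H$ be any finite-index subgroup of $\mathcal{L}_n$. By Theorem~\ref{3_3}, $H\cong\mathcal{L}_k$ for some $k\geq 1$, and applying the first assertion to $\mathcal{L}_k$ shows that every nontrivial normal subgroup of $H$ is infinite, which is exactly the assertion that $\mathcal{L}_n$ is hereditary free from finite normal subgroups.

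There is no real obstacle here; the only points needing a moment's care are that the $s=0$ case be argued uniformly in $n$ via torsion-freeness of $R^n$ (so that a single nonzero element already generates an infinite submodule), and that the finite-index case be bootstrapped from the statement applied to $\mathcal{L}_k$ rather than reproved from scratch.
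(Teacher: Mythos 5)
Your proof is correct, and it follows exactly the route the paper leaves implicit (the corollary is stated without proof): the dichotomy on $s$ via Lemma~\ref{normal_group} and Corollary~\ref{one_more_cor}, with the $s=0$ case handled by shift-invariance of $V_0$ as a nonzero $R$-submodule of $R^n$, and the hereditary clause bootstrapped from Theorem~\ref{3_3}. No gaps; your explicit use of torsion-freeness of $R^n$ to get $Rw\cong R$ is a clean way to make the $s=0$ case uniform in $n$.
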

Therefore we can derive from \cite{Grig} the following result:
\begin{corollary}\label{topfree}
Any faithful action of  $\mathcal{L}_n$  on the rooted tree induces topologically free action on the
boundary of the tree.
\end{corollary}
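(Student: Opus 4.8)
The plan is to reduce Corollary~\ref{topfree} to the intermediate statement Corollary~\ref{hered}, which carries essentially all of the content; once $\mathcal{L}_n$ is known to be hereditary free from finite normal subgroups, Corollary~\ref{topfree} is a direct application of the results of \cite{Grig}.

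\emph{Step 1 (proof of Corollary~\ref{hered}).} Let $V$ be a nontrivial normal subgroup of $\mathcal{L}_n$ with triple $(s,V_0,v)$ as in Lemma~\ref{group}. If $s>0$, then Corollary~\ref{one_more_cor} gives that $V$ has finite index in $\mathcal{L}_n$, and since $\mathcal{L}_n$ is infinite, $V$ is infinite. If $s=0$, then $V=V_0\subseteq\mathcal{A}_n$, and Lemma~\ref{normal_group} forces $xV_0=V_0$, so $V_0$ is a nonzero $R$-submodule of $\mathcal{A}_n\cong R^n$ under the standard action of $R$. Pick $0\neq w=(w_1,\dots,w_n)\in V_0$ with some coordinate $w_i\neq 0$; since $R$ is an integral domain, $\mathrm{Ann}_R(w)=0$, hence $Rw\cong R$ is infinite and so is $V$. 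For the hereditary part, let $W\leq\mathcal{L}_n$ be of finite index: by Theorem~\ref{3_3} $W$ is isomorphic to $\mathcal{L}_{ns}$ for a suitable $s\geq 1$, so applying the previous argument with $ns$ in place of $n$ shows that every nontrivial normal subgroup of $W$ is infinite. Hence $\mathcal{L}_n$ is hereditary free from finite normal subgroups, proving Corollary~\ref{hered}.

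\emph{Step 2 (proof of Corollary~\ref{topfree}).} A faithful action of $\mathcal{L}_n$ on a rooted tree is the same as a faithful action of $\mathcal{L}_n$ on the boundary of the tree. By the criterion of \cite{Grig}, for a group that is hereditary free from finite normal subgroups every such faithful boundary action is topologically free. Since $\mathcal{L}_n$ has this property by Corollary~\ref{hered}, the boundary action is topologically free.

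\emph{Main obstacle.} The only point requiring genuine argument is the case $s=0$ of Step~1: one must invoke the stronger invariance $xV_0=V_0$ coming from normality (Lemma~\ref{normal_group}), not merely $x^sV_0=V_0$, to view $V_0$ as an $R$-submodule of $R^n$, and then use that $R=\mathbb{F}_p[x,x^{-1}]$ is an infinite integral domain, so that any nonzero submodule contains a free rank-one submodule. Everything else is the triple bookkeeping of Section~\ref{sec3} together with a black-box appeal to \cite{Grig}.
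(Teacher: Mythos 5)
Your proposal is correct and follows exactly the route the paper intends: the paper states Corollary~\ref{hered} and Corollary~\ref{topfree} without written proofs, deriving the latter from the former via the results of \cite{Grig}, and your Step~1 simply supplies the routine verification of Corollary~\ref{hered} (the case $s>0$ via Corollary~\ref{one_more_cor}, the case $s=0$ via $xV_0=V_0$ and torsion-freeness of $R^n$, and the hereditary part via Theorem~\ref{3_3}). Nothing essential differs from the paper's implicit argument.
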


\subsection{Growth of normal subgroups of $\mathcal{L}_{n}$}
Recall that the zeta function for normal subgroups of $\mathcal{L}_n$ is the following:
\begin{equation}
\zeta_n(z)=\sum_{N\lhd\mathcal{L}_n:[\mathcal{L}_n:N]<\infty}[\mathcal{L}_n:N]^{-z}
\end{equation}
We have the following formula:
\begin{theorem}\label{norm1}
\begin{equation*}
\begin{aligned}
& \zeta_n(z)=\sum_{s\geq 1}s^{-z}\prod_{r\in R, prime, r|1-x^s, r\neq 1-x}\left(\sum_{M: M\in{Mod}_r, (1-x^s)R^n\subset M}[R^n:M]^{-z}\right)\times \\&\left(\sum_{M:M\in{Mod}_{1-x}, (1-x^s)R^n\subset M}\tau(M)[R^n:M]^{-z}\right),
\end{aligned}
\end{equation*}
where ${Mod}_r$ is the set of all submodules $M$ in $R^n$ such that $r^kR^n\subset M$ for some $k$, and, for $M\in{Mod}_{1-x}$,  $\tau(M)=|\ker \varphi|$ for $\varphi:R^n/M\rightarrow R^n/M$, $\varphi(r)=(1-x)r$.
\end{theorem}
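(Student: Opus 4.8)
The plan is to combine the description of finite-index normal subgroups by their triples (Lemmas~\ref{group} and \ref{normal_group}) with a Chinese Remainder Theorem decomposition of the quotient module $R^n/V_0$ into primary components.

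First I would observe that a normal subgroup $N\lhd\mathcal{L}_n$ with triple $(s,V_0,v)$ has finite index if and only if $s\geq 1$: the ``if'' is Corollary~\ref{one_more_cor}, while if $s=0$ then $N=V_0\subset\mathcal{A}_n$, which has infinite index. For $s\geq 1$, Lemma~\ref{normal_group} says that $V_0$ is an $R$-submodule of $R^n$ with $(1-x^s)R^n\subset V_0$ (hence $R^n/V_0$ is finite) and that $(1-x)v\in V_0$, while the index computation in the proof of Proposition~\ref{3_6} gives $[\mathcal{L}_n:N]=s\,[R^n:V_0]$. With $s$ and $V_0$ fixed, the admissible cosets $v+V_0$ are exactly the elements of $\ker\bigl((1-x)\colon R^n/V_0\to R^n/V_0\bigr)$; denoting its cardinality by $\tau(V_0)$, in agreement with the $\tau$ of the statement, I obtain the identity of Dirichlet series
\[
\zeta_n(z)=\sum_{s\geq 1}s^{-z}\sum_{V_0}\tau(V_0)\,[R^n:V_0]^{-z},
\]
where the inner sum runs over all $R$-submodules $V_0$ of $R^n$ with $(1-x^s)R^n\subset V_0$.

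Next, for a fixed $s$ I would factor $1-x^s=\prod_r r^{a_r}$ into its distinct irreducible divisors $r\in\mathbb{F}_p[x]$, normalized by $r(0)=1$ (among which $1-x$ always occurs). By the Chinese Remainder Theorem $R/(1-x^s)R\cong\prod_r R/r^{a_r}R$, and an $R$-submodule $V_0\supset(1-x^s)R^n$ decomposes into primary components as $R^n/V_0\cong\bigoplus_r R^n/M_r$, where the $M_r$ run independently over $R$-submodules of $R^n$ with $r^{a_r}R^n\subset M_r$ — equivalently, since the part of $1-x^s$ coprime to $r$ is invertible modulo $r^{a_r}$, over $M_r\in{Mod}_r$ with $(1-x^s)R^n\subset M_r$. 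Because this is an $R$-module isomorphism, both the index and multiplication by $1-x$ are compatible with it, so $[R^n:V_0]=\prod_r[R^n:M_r]$ and $\tau(V_0)=\prod_r\bigl|\ker\bigl((1-x)\colon R^n/M_r\to R^n/M_r\bigr)\bigr|$. The crucial point is that for $r\neq 1-x$ the irreducibles $r$ and $1-x$ are coprime in the principal ideal domain $R$, so $1-x$ is a unit of $R/r^{a_r}R$ and acts invertibly on the $r$-primary module $R^n/M_r$; hence that factor of $\tau(V_0)$ is $1$, and only the $r=1-x$ factor survives, giving $\tau(V_0)=\tau(M_{1-x})$.

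Feeding this back into the double sum, the inner sum over $V_0$ factors as a product over $r$ of independent sums over $M_r\in{Mod}_r$ with $(1-x^s)R^n\subset M_r$: the factors with $r\neq 1-x$ contribute $\sum_M[R^n:M]^{-z}$, and the factor $r=1-x$ contributes $\sum_M\tau(M)[R^n:M]^{-z}$, which is exactly the formula in the statement. I expect the main (though routine) obstacle to be the bookkeeping in the third paragraph: checking that submodules containing $(1-x^s)R^n$ correspond to the tuples of their primary pieces, so that the index and the operator $1-x$ are genuinely multiplicative across the factors, and keeping the normalization $r(0)=1$ straight so that ``$r\neq 1-x$'' really forces $r$ and $1-x$ to be coprime. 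Convergence is not an issue, since every coefficient of $\zeta_n$ is finite — bounded by $a_m(\mathcal{L}_n)$ from Proposition~\ref{3_6} — and the asserted equality is one of formal Dirichlet series.
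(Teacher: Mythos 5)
Your proposal is correct and follows essentially the same route as the paper: parametrize finite-index normal subgroups by triples $(s,V_0,v)$ with $s\geq 1$, $(1-x^s)R^n\subset V_0$ and $v+V_0$ in the kernel of multiplication by $1-x$ (giving the factor $\tau(V_0)$ and index $s[R^n:V_0]$), then apply the primary decomposition of the finite module $R^n/V_0$ to make the inner sum factor over the irreducible divisors of $1-x^s$, with only the $(1-x)$-primary component carrying the $\tau$-factor. The paper's own proof is just a terser version of this argument, including the same observation that $1-x$ acts invertibly on the $r$-primary parts for $r\neq 1-x$.
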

\begin{proof}
Recall that if $M$ is a submodule of $R^n$ of finite index, then $R^n/M=\oplus_{r\in R, r\text{ prime }} N_r$ is the primary decomposition of $R^n/M$ (Theorem 6.7 in \cite{H}), with only finite number of $N_r$ not zero. Let $\pi:R^n\rightarrow R^n/M$ and let $M_r=\pi^{-1}(N_r)$. Then $M_r\in{Mod}_r$, $M=\cap_{r}M_r$, and $[R^n:M]=\prod_{r}[R^n:M_r]$. We obtain
\begin{equation*}
\begin{aligned}
&\zeta_n(z)=\sum_{s\geq 1, M\subset R^n v\in R^n/M: (1-x^s)R^n\subset M, (1-x)v=0}(s[R^n:M])^{-z}=\\&\sum_{s\geq 1}s^{-z}\prod_{r\in R, prime, r|1+x^s, r\neq 1-x}\left(\sum_{M: M\in{Mod}_p, (1-x^s)R^n\subset M}[R^n:M]^{-z}\right)\times \\&\left(\sum_{M:M\in{Mod}_{1-x}, (1-x^s)R^n\subset M}\tau(M)[R^n:M]^{-z}\right)
\end{aligned}
\end{equation*}
 \end{proof}
\begin{corollary}\label{bounddd}
\begin{equation*}
\begin{aligned}
&\zeta_n(z)\preceq p^n\zeta(z)\xi_n(z)
\end{aligned}
\end{equation*}
where 
\[
\zeta(z)=\sum_{s\geq 1}s^{-z}
\]
 is the Riemann zeta function,
\[
\xi_n(z)=\sum_{M\subset_f R^n}[R^n:M]^{-z}
\]
where the summation is over submodules of finite index in $R^n$, is the zeta function of the module $R^n$, and 
\[
\sum_{m\geq 1}b_m m^{-z}\preceq\sum_{m\geq 1}c_m m^{-z} 
\]
if and only if $0\leq b_m\leq c_m$ for all $m\geq 1$.
\end{corollary}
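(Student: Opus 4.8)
The plan is to bound, term by term, the closed formula of Theorem~\ref{norm1}. Write $\zeta_n(z)=\sum_{s\ge 1}s^{-z}G_s(z)$, where $G_s(z)$ denotes the product over all primes $r\mid 1-x^s$ of the corresponding factors in Theorem~\ref{norm1} (so the factor at $r=1-x$ carries the weight $\tau(M)$, the others carry weight $1$). The first observation is that erasing all the weights recombines the factors into a single sum: by the primary (equivalently, Chinese remainder) decomposition used in the proof of Theorem~\ref{norm1}, the finite-index submodules $M\subset R^n$ with $(1-x^s)R^n\subset M$ are in index-multiplicative bijection with tuples $(M_r)_{r\mid 1-x^s}$ of submodules satisfying $M_r\in{Mod}_r$ and $(1-x^s)R^n\subset M_r$, so that
\[
\prod_{r\mid 1-x^s}\Big(\sum_{M\in{Mod}_r,\ (1-x^s)R^n\subset M}[R^n:M]^{-z}\Big)=\sum_{M:\ (1-x^s)R^n\subset M}[R^n:M]^{-z}\preceq\xi_n(z),
\]
the last $\preceq$ because the left sum ranges over a subset of all finite-index submodules and all coefficients are nonnegative. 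Consequently, once we know $\tau(M)\le p^n$, we get $G_s(z)\preceq p^n\xi_n(z)$ for every $s$, since the $m^{-z}$-coefficient of $G_s(z)$ equals $\sum_{[R^n:M]=m,\ (1-x^s)R^n\subset M}\tau(M)$.

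Thus the crux is the uniform bound $\tau(M)\le p^n$, and here is how I would prove it. Recall $\tau(M)=|\ker\varphi|$ with $\varphi$ multiplication by $1-x$ on $R^n/M$, a finite $R$-module. Decompose $R^n/M=\bigoplus_r N_r$ into primary components. For $r\ne 1-x$ the polynomial $1-x$ is coprime to $r$, hence invertible modulo every power of $r$, so $\varphi$ restricted to $N_r$ is bijective; therefore $\ker\varphi\subseteq N_{1-x}$ and $\tau(M)$ is the order of the kernel of multiplication by $1-x$ on $N_{1-x}$. By the structure theorem $N_{1-x}\cong\bigoplus_{i=1}^{\ell}R/(1-x)^{k_i}R$ with $k_i\ge 1$, and on each cyclic summand the kernel of multiplication by $1-x$ is $(1-x)^{k_i-1}R/(1-x)^{k_i}R\cong R/(1-x)R\cong\mathbb{F}_p$, of order $p$; so $\tau(M)=p^{\ell}$. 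Finally $\ell\le n$: the preimage of $N_{1-x}$ under the surjection $R^n\to R^n/M$ is a submodule of $R^n$, hence free of rank $\le n$ since $R$ is a principal ideal domain, and so generated by at most $n$ elements; its quotient $N_{1-x}$ is then generated by at most $n$ elements, whence $\ell\le n$. Therefore $\tau(M)\le p^n$ for every $M$.

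It remains to assemble: from $G_s(z)\preceq p^n\xi_n(z)$ and the fact that $\preceq$ is preserved under multiplication by the nonnegative Dirichlet series $s^{-z}$ and under (infinite) summation over $s\ge 1$, we conclude
\[
\zeta_n(z)=\sum_{s\ge 1}s^{-z}G_s(z)\preceq p^n\Big(\sum_{s\ge 1}s^{-z}\Big)\xi_n(z)=p^n\zeta(z)\xi_n(z).
\]
Everything except the estimate $\tau(M)\le p^n$ is pure bookkeeping built on Theorem~\ref{norm1} and on the compatibility of $\preceq$ with sums and products of Dirichlet series with nonnegative coefficients; I expect the bound on $\tau(M)$ --- which rests on the structure theorem together with the observation that $R^n/M$, hence its $(1-x)$-primary part, needs at most $n$ generators --- to be the only real point, and hence the main obstacle.
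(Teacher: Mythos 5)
Your proposal is correct and follows essentially the same route as the paper: the key bound $\tau(M)\leq p^n$ via the structure theorem for the $(1-x)$-primary part (at most $n$ cyclic summands, each contributing a kernel of order $p$), combined with the primary-decomposition bookkeeping that lets the product over primes dividing $1-x^s$ be dominated, coefficientwise, by $\xi_n(z)$. The only cosmetic difference is that you recombine the restricted factors into a single subsum of $\xi_n(z)$, while the paper bounds each Euler factor of $\xi_n(z)$ separately; the substance is identical.
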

\begin{proof}
Note that for  $M\in {Mod}_{1-x}$, we have that $R^n/M=\oplus_1^n R/(1-x)^{k_i}R$,  therefore $\dim_{\mathbb{F}_p} \ker \phi\leq n$ and thus $\tau(M)\leq p^n$. Note also that 
\[
\xi_n(z)=\sum_{M\subset_f R^n}[R^n:M]^{-z}=\prod_{r\in R, r\text{ prime }}\left(\sum_{M\in{Mod}_r}[R^n:M]^{-z}\right),
\]
and thus 
\[
\sum_{M: M\in{Mod}_r, (1-x^s)R^n\subset M}[R^n:M]^{-z}\preceq\sum_{M\in{Mod}_r}[R^n:M]^{-z}, 
\]
and we obtain the first formula. 

\end{proof}

\begin{corollary}\label{norm2}
Let $s^{\triangleleft}_m(\mathcal{L}_n)$ be the number of normal subgroups of $\mathcal{L}_n$ of index smaller or equal to $m$. Then 
\[
\limsup_{m}\frac{\log s^{\triangleleft}_m(\mathcal{L}_n)}{\log m}\leq n
\]
 \end{corollary}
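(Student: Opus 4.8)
The plan is to read off the growth exponent of $s^{\triangleleft}_m(\mathcal{L}_n)$ from the abscissa of convergence of the Dirichlet series $\zeta_n(z)$. Write $a^{\triangleleft}_d=a^{\triangleleft}_d(\mathcal{L}_n)$ for the number of normal subgroups of $\mathcal{L}_n$ of index exactly $d$, so that $\zeta_n(z)=\sum_{d\ge 1}a^{\triangleleft}_d\,d^{-z}$ and $s^{\triangleleft}_m(\mathcal{L}_n)=\sum_{d\le m}a^{\triangleleft}_d$. The first step is to invoke the classical fact that for a Dirichlet series $\sum_m b_m m^{-z}$ with non-negative coefficients the abscissa of convergence $\alpha$ satisfies $\alpha=\limsup_m\frac{\log(\sum_{d\le m}b_d)}{\log m}$ (see \cite{LS}). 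Applied to $\zeta_n$, this reduces Corollary~\ref{norm2} to the assertion that the abscissa of convergence of $\zeta_n(z)$ is at most $n$.

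To bound that abscissa I would use Corollary~\ref{bounddd}, which gives the coefficientwise domination $\zeta_n(z)\preceq p^n\,\zeta(z)\,\xi_n(z)$. Since all series involved have non-negative coefficients, coefficientwise domination forces the abscissa of convergence of $\zeta_n$ to be at most that of $p^n\,\zeta(z)\,\xi_n(z)$; multiplying by the constant $p^n$ does not change it; and the abscissa of convergence of the product of two Dirichlet series with non-negative coefficients is at most the maximum of the two separate abscissas, because on the real axis convergence of each factor at a point $z_0$ is absolute convergence, and then the Cauchy product $\sum_m(\sum_{de=m}a_d b_e)m^{-z_0}=\big(\sum a_d d^{-z_0}\big)\big(\sum b_e e^{-z_0}\big)$ converges at $z_0$ as well. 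Hence it suffices to bound the abscissas of $\zeta(z)$ and $\xi_n(z)$ separately.

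Now $\zeta(z)=\sum_{s\ge 1}s^{-z}$ is the Riemann zeta function, with abscissa of convergence $1$. For $\xi_n(z)=\sum_{M\subset_f R^n}[R^n:M]^{-z}$, every finite-index submodule of $R^n$ has index a power of $p$, and by Lemma~\ref{submodulegrowth} the number of submodules of index $p^t$ is $b_t(R^n)=p^{tn}-p^{(t-1)n}$ for $t\ge 1$ and $b_0(R^n)=1$. Thus $\xi_n(z)=1+\sum_{t\ge 1}(p^{tn}-p^{(t-1)n})p^{-tz}$, a geometric-type series which converges precisely when $p^{\,n-z}<1$, i.e. when $z>n$; so its abscissa of convergence equals $n$. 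Combining with the previous paragraph, the abscissa of convergence of $\zeta_n(z)$ is at most $\max(1,n)=n$ (recall $n\ge 1$), and by the first paragraph this is exactly the inequality $\limsup_m\frac{\log s^{\triangleleft}_m(\mathcal{L}_n)}{\log m}\le n$.

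I expect the only delicate points to be bookkeeping ones: checking that coefficientwise domination of Dirichlet series with non-negative coefficients passes to an inequality of abscissas, and that the abscissa of a product is controlled by the maximum of the factors' abscissas — both standard once one works on the real axis where convergence is absolute. The genuine arithmetic input, namely that $b_t(R^n)$ grows like $p^{tn}$, is exactly Lemma~\ref{submodulegrowth}, and the identification of the growth exponent of $s^{\triangleleft}_m$ with the abscissa of convergence is the single external ingredient I would cite from \cite{LS} rather than reprove.
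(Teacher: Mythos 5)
Your argument is correct and follows essentially the same route as the paper: both use Corollary~\ref{bounddd} together with Lemma~\ref{submodulegrowth} to see that $\xi_n(z)$ (and hence $\zeta_n(z)$) converges for $\Re z>n$, and then invoke the standard relation between the abscissa of convergence of a Dirichlet series with non-negative coefficients and the growth of its coefficient partial sums, as in \cite{LS}. The only cosmetic difference is that the paper records the closed form $\xi_n(z)=\frac{p^z-1}{p^z-p^n}$, while you argue convergence directly and spell out the product-of-Dirichlet-series step.
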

\begin{proof}
Notice that by the lemma \ref{submodulegrowth} 
\[
\xi_n(z)=1+\sum_{t\geq 1}\frac{p^{tn}-p^{(t-1)n}}{p^{tz}}=\frac{p^z-1}{p^z-p^n},
\]
so $\xi_n(z)$ is analytic for $\Re z>n$, where $\Re z$ is the real part of $z$, and thus by  the corollary \ref{bounddd} 
$\zeta_n(z)$ is analytic for $\Re z>n$, since Riemann zeta function is analytic for $\Re z>1$. Now observe that 
\[
\zeta_n(z)=\sum_{m=1}^{\infty}\frac{a^{\triangleleft}_m(\mathcal{L}_n)}{m^z},
\]
where $a^{\triangleleft}_m(\mathcal{L}_n)$ is the number of normal subgroups of $\mathcal{L}_n$ of index $m$, and note that
$s^{\triangleleft}_m(\mathcal{L}_n)=\sum_{j=1}^{m}a^{\triangleleft}_j(\mathcal{L}_n)$, from which we obtain the statement of the corollary
 (see the beginning of chapter 15 of \cite{LS}).
\end{proof}

\begin{remark}
Note that to derive the estimate Corollary~\ref{norm2} we only used the property that if $V$ is a normal subgroup of $\mathcal{L}_n$ then $V\cap\mathcal{A}$ is invariant under the shift, no other properties of normal subgroups listed in the Lemma~\ref{normal_group} were used. 
\end{remark}

\subsection{Subgroups, normal subgroups and factors of $\mathcal{L}_{1,p}$.}
When $n=1$ we give one more description of normal subgroups.

\begin{lemma}\label{one_more}
Let $e\in\mathcal{A}_1$ be the element with $1$ on the position $0$ and zeros everywhere else. Then to each nontrivial normal subgroup $V$ of $\mathcal{L}_1$ uniquely corresponds triple $(s,f,h)$ where $s\in\mathbb{N}$ is defined as before, and $f,h\in\mathbb{F}_p[x]$ are defined in the following way: $f$ is the polynomial of minimal degree such that $f(x)e\in{}V$ and $f(0)=1$, and $h$ is the polynomial such that $\deg h<\deg f$ and $h(x)e=v\mod V_0$. Moreover, $f(0)=1$. If $s\geq 1$ then $f$ divides $1-x^s$, and $h$ is either $0$ or $h=f(1-x)^{-1}$ (the latter is possible only if $f$ is divisible by $1-x$).  
\end{lemma}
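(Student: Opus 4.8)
The plan is to read the triple $(s,V_0,v)$ attached to $V$ by Lemma~\ref{group}, subject to the normality conditions of Lemma~\ref{normal_group}, entirely inside $R=\mathbb{F}_p[x,x^{-1}]$, using the identification $\mathcal{A}_1=R$ with $e\leftrightarrow 1$ and the ideal/module dictionary from Section~2.

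First I would produce $f$. By Lemma~\ref{normal_group}, normality forces $xV_0=V_0$, so $V_0=V\cap\mathcal{A}_1$ is an ideal of $R$; it is nonzero — in fact of finite index — by Corollary~\ref{one_more_cor} (when $s\ge1$) and because $V=V_0$ is nontrivial (when $s=0$). By the basic lemma on ideals of $R$ in Section~2, $V_0=fR$ for a unique $f\in\mathbb{F}_p[x]$ with $f(0)=1$, and since $fR\cap\mathbb{F}_p[x]=f\mathbb{F}_p[x]$, a polynomial $g$ with $g(0)=1$ satisfies $g(x)e\in V$ exactly when $f\mid g$ in $\mathbb{F}_p[x]$; the minimal-degree such $g$ is $f$ itself, matching the stated description, and $f(0)=1$.

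Next I would produce $h$ and close the correspondence. The vector $v$ is pinned down only modulo $V_0=fR$ (Lemma~\ref{group}), and $R/fR\cong\mathbb{F}_p[x]/f\mathbb{F}_p[x]$ has a unique representative of degree $<\deg f$; this is $h$, and $h=0$ when $s=0$ (take $v=0$). Conversely $(s,f,h)$ recovers $(s,V_0,v+V_0)$, hence $V$ by Lemma~\ref{group}, so the correspondence is a bijection. For the divisibility claims assume $s\ge1$: normality gives $(1-x^s)\mathcal{A}_1\subset V_0=fR$, so $f\mid 1-x^s$ in $R$ and hence in $\mathbb{F}_p[x]$ (both $f$ and $1-x^s$ have nonzero constant term, cf.\ the observation after that lemma); normality also gives $(1-x)v\in V_0$, i.e.\ $f\mid(1-x)h$ in $R$.

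The remaining — and I expect main — step is to deduce the shape of $h$ from $f\mid(1-x)h$ and $\deg h<\deg f$. Because $1-x$ is irreducible, $\gcd(f,1-x)$ is a unit or an associate of $1-x$. If it is a unit, then $f$ is coprime to $1-x$, so $f\mid h$, and then $\deg h<\deg f$ gives $h=0$; this also shows the second alternative can occur only when $1-x\mid f$. If $1-x\mid f$, write $f=(1-x)f'$ with $f'(0)=1$; then $f'\mid h$ and $\deg h<\deg f'+1$, so $h$ is a scalar multiple of $f'=f(1-x)^{-1}$, i.e.\ $h=0$ or $h$ equals $f(1-x)^{-1}$ up to a nonzero constant. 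Apart from this short gcd-and-degree argument the proof is bookkeeping on the structural lemmas of Section~2; the only care needed elsewhere is keeping track of divisibility in $R$ versus in $\mathbb{F}_p[x]$.
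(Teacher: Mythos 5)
Your proof is correct and follows essentially the same route as the paper: identify $V_0$ as the ideal $fR$ of $R$ via the normality condition $xV_0=V_0$, take $h$ to be the minimal-degree representative of $v+V_0$ in $\mathbb{F}_p[x]/f\mathbb{F}_p[x]$, extract $f\mid 1-x^s$ and $f\mid(1-x)h$ from Lemma~\ref{normal_group}, and finish with a degree argument. Your closing observation that this only pins $h$ down to a nonzero scalar multiple of $f(1-x)^{-1}$ is in fact more careful than the paper's own step, which passes from $f\mid(1-x)h$ and $\deg h<\deg f$ directly to $f=(1-x)h$ without addressing that scalar.
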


\begin{proof}
Since $V$ is normal and not trivial then $V_0$ is also not trivial. Indeed if $(v,s)\in V$ then the commutator $[(w,0),(v,s)]=(w-x^s w,0)\in V_0$. Note that if we identify $\mathcal{A}_1$ and $R$, with $e$ identified to $1\in R$, then each nontrivial subgroup $V_0$ of $\mathcal{A}_1$ such that $x{}V_0=V_0$ is in fact a nontrivial ideal of $R$, which must be of the form $fR$ for some polynomial $f\in\mathbb{F}_p[x]$ such that $f(0)=1$. Under this identification $\mathcal{A}_1/V_0$ is identified with $R/fR=\mathbb{F}_p[x]/f\mathbb{F}_p[x]$, and since $v$ from the triple is defined up to addition of element of $V_0$, we choose the unique polynomial of the minimal degree $h$ which is the representative of the corresponding coset of $\mathbb{F}_p[x]/f\mathbb{F}_p[x]$.

Now let $V$ be the normal subgroup of $\mathcal{L}_1$, $(s,V_0,v)$, $(s,f,h)$ be the triples associated to it as defined in the statement of the lemma. We already proved that $(1-x^s)\mathcal{A}_1\subset{}V_0$, which implies that $(1-x^s)e\in{}V_0$ and thus by the definition of $f$ that $1-x^s$ is divisible by $f$. Now, suppose $h\neq{0}$. On the one hand the degree of $h$ is less then the degree of $f$, since otherwise we reduce it preserving the requirements on $h$. On the other hand it follows from $(1-x)v\in{}V_0$ and $v=h(x)e\mod V_0$ that $(1-x)h(x)e\in{}V_0$, and thus that $f$ divides $(1-x)h$. Therefore $f=(1-x)h$.
\end{proof}

To summarize, we have the following three cases for the triple $(s,f,h)$ associated to the normal subgroup $V$ of $\mathcal{L}$: either $s=0$ and then the triple is $(0,f,0)$, with no restrictions on $f$ except $f(0)=1$; or $s\geq{}1$, then necessarily $f\neq{}0$, and either $h=0$ and the triple is $(s,f,0)$ with $f|(1-x^s)$ or $h\neq{}0$ and the triple is $(s,(1-x)h,h)$ with $h$ dividing $(1-x^s)(1-x)^{-1}=1+x+\dots+x^{s-1}=\varphi_s(x)$. 

\begin{definition}\label{def_norm}
For $f\in{}\mathbb{F}_p[x]$ and $s\geq{1}$, such that $f(0)=1$ and $f|1-x^s$ denote by $B_{f,s}$ the corresponding subgroup with projection onto $\mathbb{Z}$ equal to $s\mathbb{Z}$, intersection with base equal to $fR$ and the corresponding vector equal to $0$. If moreover $1-x|f$, that is if $f(1)=0$, denote the group with triple $(s,f,f(1-x)^{-1})$ by $C_{f,s}$.
\end{definition}

\begin{theorem}\label{prof}
Any nontrivial normal subgroup of $\mathcal{L}_1$ of finite index is either $B_{f,s}$ or $C_{f,s}$ for some polynomial $f$ and $s\geq 1$. We have the following description of the lattice of normal subgroups of finite index in $\mathcal{L}_1$.
\begin{itemize}
\item $B_{f,s}\subset{}B_{f',s'}$ if and only if $s'|s$, $f'|f$. 
\item $C_{f,s}\subset{}C_{f',s'}$ if and only if $s'|s$, $f'|f$ and \\ 
$f=\varphi_{s/s'}(x^{s'})f'\mod{}(1-x)f'$. 
\item $C_{f,s}\subset{}B_{f',s'}$ if and only if $s'|s$ and $f'|f(1-x)^{-1}$.
\item $B_{f,s}\subset{}C_{f',s'}$ if and only if $s'|s$, $f'|f$ and $s/s'$ is even.
\end{itemize}
In particular, the set of groups $B_{1-x^s,s}$, $s\geq 1$, is a basis for profinite topology.
\end{theorem}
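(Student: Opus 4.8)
My plan is to split the theorem into three parts, treated in order: the identification of the finite-index normal subgroups with the families $B_{f,s}$ and $C_{f,s}$; the four inclusion criteria; and the basis statement, which then follows formally. The first part is immediate from Lemma~\ref{one_more} together with the summary following it and Corollary~\ref{one_more_cor}. A nontrivial normal subgroup $V$ with $(s,f,h)$-triple has $s=0$ precisely when $V\subset\mathcal{A}_1$, in which case $[\mathcal{L}_1:V]=\infty$ since already $[\mathcal{L}_1:\mathcal{A}_1]=\infty$; while if $s\geq 1$ then $[\mathcal{L}_1:V]<\infty$ by Corollary~\ref{one_more_cor}, and then $V=B_{f,s}$ when $h=0$ and $V=C_{f,s}$ when $h=f(1-x)^{-1}\neq 0$, the constraints $f\mid 1-x^s$ and (for $C$) $(1-x)\mid f$ being exactly those built into Definition~\ref{def_norm}. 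Uniqueness of the $(s,f,h)$-triple (Lemma~\ref{one_more}) makes this list unambiguous.

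For the inclusion criteria I would substitute the explicit triples into the ``Moreover'' clause of Lemma~\ref{3_1}. Identifying $\mathcal{A}_1$ with $R$ via $e\leftrightarrow 1$, the triple of $B_{f,s}$ becomes $(s,fR,0)$ and that of $C_{f,s}$ becomes $(s,fR,f(1-x)^{-1})$, with $f(1-x)^{-1}\in R$ because $(1-x)\mid f$, and recall $\varphi_{s/s'}(x^{s'})=(1-x^s)/(1-x^{s'})$. In all four cases the conditions $s'\mid s$ and $fR\subset f'R$ (equivalently $f'\mid f$) transcribe directly, and all the content is in the congruence $v\equiv\varphi_{s/s'}(x^{s'})v'\mod f'R$:
\begin{itemize}
\item $B_{f,s}\subset B_{f',s'}$: it reads $0\equiv 0$, so the criterion is $s'\mid s$ and $f'\mid f$.
\item $C_{f,s}\subset B_{f',s'}$: it reads $f(1-x)^{-1}\equiv 0\mod f'R$, i.e.\ $f'\mid f(1-x)^{-1}$, which itself forces $f'\mid f$.
\item $B_{f,s}\subset C_{f',s'}$: it reads $0\equiv\varphi_{s/s'}(x^{s'})f'(1-x)^{-1}\mod f'R$, i.e.\ $(1-x)\mid\varphi_{s/s'}(x^{s'})$; evaluating at $x=1$ turns this into $p\mid(s/s')$ (``$s/s'$ even'' for $p=2$), on top of $s'\mid s$ and $f'\mid f$.
\item $C_{f,s}\subset C_{f',s'}$: it reads $f(1-x)^{-1}\equiv\varphi_{s/s'}(x^{s'})f'(1-x)^{-1}\mod f'R$; multiplying by $1-x$ gives the stated $f\equiv\varphi_{s/s'}(x^{s'})f'\mod (1-x)f'R$, and dividing back by $1-x$ is legitimate since $f$, $f'$, hence $\varphi_{s/s'}(x^{s'})f'$, are all divisible by $1-x$, so the quotients remain in $R$.
\end{itemize}

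Finally, the basis statement: each $B_{1-x^s,s}$ is itself a finite-index normal subgroup (Definition~\ref{def_norm} applies, as $(1-x^s)(0)=1$). Given an arbitrary finite-index normal subgroup $V$ I produce some $B_{1-x^{s''},s''}\subset V$: if $V=B_{f,s}$, then $f\mid 1-x^s$ gives $B_{1-x^s,s}\subset B_{f,s}$ by the first bullet; if $V=C_{f,s}$, take $s''=ps$, so that $f\mid 1-x^s\mid 1-x^{s''}$ and $s''/s=p$, whence $B_{1-x^{ps},ps}\subset C_{f,s}$ by the third bullet. Thus $\{B_{1-x^s,s}:s\geq 1\}$ is cofinal among the finite-index normal subgroups (hence among all finite-index subgroups, each containing its core), i.e.\ is a fundamental system of neighborhoods of the identity for the profinite topology. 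The whole argument is bookkeeping inside $R=\mathbb{F}_p[x,x^{-1}]$; the only delicate point, and the expected main obstacle, is the back-and-forth between $v$-congruences modulo $f'R$ and polynomial congruences modulo $(1-x)f'$ in the $C\subset C$ and $B\subset C$ cases, where one must be sure the relevant polynomials really are divisible by $1-x$ — which is precisely what the standing hypothesis $(1-x)\mid f$ for the $C$-subgroups provides.
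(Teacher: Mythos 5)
Your proof is correct and follows essentially the same route as the paper: identification via Lemma~\ref{one_more} and Corollary~\ref{one_more_cor}, the four inclusion criteria obtained by substituting the explicit triples into the last clause of Lemma~\ref{3_1} (which the paper compresses to ``inclusions follow from Lemma~\ref{group}''), and cofinality of the family $B_{1-x^s,s}$ exactly as in the paper, including the inclusion $C_{f,s}\supset B_{1-x^{ps},ps}$. Two cosmetic remarks: that last inclusion is an instance of the fourth bullet ($B\subset C$), not the third, and your derived condition $p\mid s/s'$ there (reducing to ``even'' only when $p=2$) is indeed what the paper's own cofinality argument implicitly uses.
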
 
\begin{proof}
By the Lemma~\ref{one_more} to any nontrivial normal subgroup $V$ corresponds a triple $(s,f,h)$. If $s=0$ then $V\subset\mathcal{A}_1$ and therefore $V$ has infinite index. If $s\geq 1$ then the index of $V$ is $sp^{\deg f}$ and by Definition~\ref{def_norm} $V=B_{f,s}$ if $h=0$ and $V=C_{f,s}$ if $h\neq 0$.
Inclusions follow from the Lemma~\ref{group}. To prove the last statement note that $B_{f,s}\supset{}B_{1-x^s,s}$, since $f|1-x^s$ by the definition of $B_{f,s}$. Also, $C_{f,s}\supset{}B_{1-x^{ps},ps}$, by the last item and the fact that $f|(1-x^s)^p=1-x^{ps}$. Thus any normal subgroup of finite index contains some $B_{1-x^s,s}$, therefore it is a basis for profinite topology.
\end{proof}
Now we describe the finite factors of $\mathcal{L}_1$.
\begin{theorem}
Any extension $H$ of a finite cyclic group $G$ by a finite elementary p-group $E$, such that the induced action of $G$ on $E$ has a cyclic vector, is a factor of $\mathcal{L}_1$. In particular, if G has order $s$ and $m$ is a cyclic vector of $E$ with minimal polynomial $f$ (we normalize $f$ so that $f(0)=1$) then any kernel of a surjective homomorphism $\mathcal{L}_1\rightarrow{}H$ is either $B_{f,s}$ or $C_{f,s}$. Moreover, if we fix action of $G$ on $E$ then there are at most two non-equivalent extensions. If $f(1)=1$ or if $f(1)=0$ but $f$ does not divide $\varphi_s(x)$ then there is only split extension, in all other cases there are exactly two extensions.
\end{theorem}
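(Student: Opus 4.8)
The plan is to exhibit $H$ explicitly as a quotient of $\mathcal{L}_1$ and then read off the triple of the kernel from Lemma~\ref{one_more}. Fix a generator $\bar t$ of $G$ and any lift $t\in H$. Because $m$ is a cyclic vector, $E$ is spanned over $\mathbb{F}_p$ by $\{\bar t^{\,j}m=t^{j}mt^{-j}\}_{j\ge 0}$, so $H=\langle t,m\rangle$. I would define $\phi\colon\mathcal{L}_1\to H$ on the standard generators by $a\mapsto t$, $b\mapsto m$, where $b$ is the lamp $e$ at position $0$ and $a$ is the shift. This respects the defining relations $b^{p}=1$ and $[b,a^{j}ba^{-j}]=1$ for $j\ge1$: the first goes to $m^{p}=1$, which holds since $E$ is elementary abelian, and the others go to commutativity relations inside the abelian group $E$. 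Since $\{t,m\}$ generates $H$, $\phi$ is surjective, so $H$ is a factor of $\mathcal{L}_1$.

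Next I would compute $K=\ker\phi$. Identifying $\mathcal{A}_1$ with $R$ via $e\leftrightarrow 1$, the restriction $\phi|_{\mathcal{A}_1}$ is the $R$-module map $w(x)\mapsto w(\bar t)m$, whose kernel is the annihilator ideal of $m$; by construction this ideal is $fR$, so $K\cap\mathcal{A}_1=fR$. Moreover $(w,k)\in K$ forces $t^{k}\in E$, i.e. $s\mid k$, and every $k\in s\mathbb{Z}$ occurs (solve $w(\bar t)m=t^{-k}$), so the projection of $K$ onto $\mathbb{Z}$ is $s\mathbb{Z}$. Thus, in the notation of Lemma~\ref{one_more}, $K$ has refined triple $(s,f,h)$, and $h=0$ exactly when the lift $t$ can be chosen with $t^{s}=1$, i.e. exactly when the extension splits; then $K=B_{f,s}$, and otherwise $h\ne 0$, which by Lemma~\ref{one_more} forces $1-x\mid f$ and $K=C_{f,s}$ (the only remaining possibility, and then a legitimate normal subgroup). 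For the clause that \emph{every} surjection $\mathcal{L}_1\to H$ has kernel in $\{B_{f,s},C_{f,s}\}$ one argues conversely: by Theorem~\ref{prof} a finite-index normal subgroup is some $B_{f',s'}$ or $C_{f',s'}$, and both $\mathcal{L}_1/B_{f',s'}$ and $\mathcal{L}_1/C_{f',s'}$ are extensions of $\mathbb{Z}/s'$ by the cyclic $\mathbb{Z}/s'$-module $\mathbb{F}_p[x]/(f')$ (with $x$-action) admitting a cyclic vector; comparing the order $s'p^{\deg f'}$, the derived subgroup $(x-1)\mathbb{F}_p[x]/(f')$, and the abelianization with those of $H$ then pins down $s'=s$ and $f'=f$.

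It remains to count, for a fixed action of $G$ on $E$, the extensions $1\to E\to H\to G\to1$ admitting a cyclic vector. With $E\cong\mathbb{F}_p[x]/(f)$ and $\bar t$ acting as multiplication by $x$, these are classified up to equivalence by $H^{2}(G,E)$, and since $G=\mathbb{Z}/s$ is cyclic, $H^{2}(G,E)\cong E^{G}/N_{G}E$ with $N_{G}$ acting as multiplication by $\varphi_s(x)$. Here $E^{G}=\ker(x-1\mid E)$ is $0$ when $1-x\nmid f$ (then $x-1$ is a unit mod $f$) and is one-dimensional over $\mathbb{F}_p$ when $1-x\mid f$. From $(1-x)\varphi_s(x)=1-x^{s}\equiv0\pmod f$ one gets $N_{G}E\subseteq E^{G}$, so in the second case $N_{G}E=E^{G}$ unless $\varphi_s(x)\equiv0\pmod f$, i.e. unless $f\mid\varphi_s(x)$. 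Hence $H^{2}(G,E)\ne0$ exactly when $1-x\mid f$ and $f\mid\varphi_s(x)$, and then $H^{2}(G,E)\cong\mathbb{F}_p$; since scalar multiplication by $\mathbb{F}_p^{*}$ is a $G$-module automorphism of $E$ acting on $H^{2}(G,E)$ by the same scalar, the nonzero classes all give isomorphic extensions. So there are at most two extensions — the split one always, and a non-split one (whose kernel is $C_{f,s}$) precisely when $1-x\mid f$ and $f\mid\varphi_s(x)$, which is the stated criterion once the condition written ``$f(1)=1$'' is read as $f(1)\ne 0$, i.e. $1-x\nmid f$.

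The main obstacle I expect is the uniqueness part of the second paragraph: when $p\mid s$ a surjection $\mathcal{L}_1\to H$ need not carry $\mathcal{A}_1$ into $E$, so identifying $\mathcal{L}_1/B_{f',s'}$ and $\mathcal{L}_1/C_{f',s'}$ up to isomorphism, and deciding exactly when these two quotients coincide, has to be carried out through the full inclusion calculus of Theorem~\ref{prof} rather than by a naive invariant count. By contrast, once $E$ is written as $\mathbb{F}_p[x]/(f)$ the cohomological count in the third paragraph is routine.
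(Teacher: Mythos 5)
Your construction of the surjection from the presentation $\langle a,b\mid b^p, [b^{a^n},b^{a^m}]\rangle$ and your computation of the extension count via $H^2(G,E)=E^G/NE$, with $E\cong\mathbb{F}_p[x]/(f)$, $E^G$ nonzero (and one-dimensional, spanned by $f(1-x)^{-1}$) exactly when $f(1)=0$, and $NE$ the image of $\varphi_s(x)$, is essentially the paper's own proof; in fact you are slightly more careful than the paper on two points, namely observing $N_GE\subseteq E^G$ via $(1-x)\varphi_s(x)=1-x^s\equiv 0 \bmod f$, and passing from the $p$ cohomology classes (when $H^2\neq 0$ and $p>2$) to ``two extensions'' by the scalar automorphism argument, a step the paper glosses over by equating the count with $|H^2|$.

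Concerning the clause you flag as the main obstacle --- that \emph{every} surjection $\mathcal{L}_1\to H$ has kernel $B_{f,s}$ or $C_{f,s}$ --- the paper's proof is silent on it, so there is nothing to compare against, and your caution is justified: read literally the clause needs the surjection to be compatible with the given extension data, since the same abstract $H$ may arise from a different decomposition (e.g.\ for $p=2$, $H=\mathbb{Z}/4$ viewed as an extension of $\mathbb{Z}/2$ by $\mathbb{Z}/2$ has $f=1+x$, $s=2$, yet $H\cong\mathcal{L}_1/B_{1,4}$, and $B_{1,4}$ is neither $B_{1+x,2}$ nor $C_{1+x,2}$). For surjections of the form you construct (sending $\mathcal{A}_1$ into $E$) your kernel computation is fine, up to one small slip: for a \emph{fixed} surjection, $h=0$ iff the particular lift $t=\phi(a)$ satisfies $t^s=1$, not iff the extension splits; when $f(1)=0$ but $f\nmid\varphi_s(x)$ the split extension also occurs as $\mathcal{L}_1/C_{f,s}$ (consistent with there being only one extension while $B_{f,s}\neq C_{f,s}$). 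Neither of these affects the parts of the theorem the paper actually proves, where your argument coincides with, and slightly sharpens, the paper's.
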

\begin{proof}
 $\mathcal{L}_1$ can be given by $\{a,b|b^p=1,[b^{a^n},b^{a^m}]=1\text{ for all }n,m\}$. Therefore if $H$ is an extension of $G$ by $E$, $G$ is generated by $x$ and $m\in{}E$ is a cyclic vector, then the map $b\mapsto{}m$, $a\mapsto{}x$ can be lifted to a surjective map $\mathcal{L}\rightarrow{}H$. 

The number of equivalent classes of extensions of $G$ by $E$ is equal to the order of $H^2(G,E)$, where $H^2(G,E)$ is the second cohomology group of $G$ with coefficients in $G$-module $E$. By \cite{brown} $H^2(G,E)=E^G/NE$, where $E^G$ is the set of fixed points in $E$ under the action of $G$ and $N=1+x+\dots+x^{s-1}$. If $E$ has a cyclic vector $m$ than it is isomorphic to $\mathbb{F}_p[x]/f\mathbb{F}_p[x]$ where $f$ is the minimal polynomial for $m$. Thus if $g+(f)$ is a fixed point it follows that $xg=g\mod{}f$, or equivalently $f|(1-x)g$. Since $\deg{}g<\deg{}f$ this can only happen if $f=(1-x)g$, i.e. if $f(1)=0$ and $g=f(1-x)^{-1}$.  Thus $\dim E^G=1$ if $f(1)=0$ and $0$ otherwise. Also, $NE=\{1+\dots+x^{s-1}+(f)\}$, and thus if $f(1)=0$, $NE=E^G$ if and only if $f|(1+\dots+x^{s-1})$.
\end{proof}

\subsection{Profinite and pro-$p$ completion of $\mathcal{L}_{n,p}$.}
Perhaps the following lemma is well-known.
\begin{lemma}\label{prof1}
The profinite completion of $\mathcal{L}_n $ is $\mathbb{F}_p[[\hat{\mathbb{Z}}]]^n\rtimes\hat{\mathbb{Z}}$, where $\hat{\mathbb{Z}}$ is the profinite completion of $Z$ and $\mathbb{F}_p[[\hat{\mathbb{Z}}]]$ is the profinite completion of the ring $R=\mathbb{F}_p[\mathbb{Z}]$, which is also the projective limit $\varprojlim{}\mathbb{F}_p[\mathbb{Z}/n\bz]$.
\end{lemma}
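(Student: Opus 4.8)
The plan is to exhibit an explicit cofinal family of finite-index normal subgroups of $\mathcal{L}_n$ whose quotients are manifestly of the asserted form, and then pass to the inverse limit.

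First I would introduce, for each $s\geq 1$, the subgroup $W_s$ with triple $(s,(1-x^s)\mathcal{A}_n,0)$ in the sense of Lemma~\ref{group}; concretely $W_s=(1-x^s)\mathcal{A}_n\rtimes s\mathbb{Z}$. Since multiplication by $x$ preserves $(1-x^s)\mathcal{A}_n$ and $(1-x)\cdot 0=0\in(1-x^s)\mathcal{A}_n$, Lemma~\ref{normal_group} shows $W_s$ is normal, and it has finite index because $\mathcal{A}_n/(1-x^s)\mathcal{A}_n\cong(R/(1-x^s)R)^n$ is finite; for $s\mid s'$ one has $W_{s'}\subset W_s$ by the inclusion criterion of Lemma~\ref{group}, and the family is directed by reverse inclusion (use $W_{ss'}$). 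The key claim is that $\{W_s\}$ is cofinal among all finite-index normal subgroups. To see this, let $N\lhd\mathcal{L}_n$ have finite index and triple $(t,V_0,v)$; a subgroup contained in $\mathcal{A}_n$ has infinite index, so $t\geq 1$, and Lemma~\ref{normal_group} gives $xV_0=V_0$, $(1-x^t)\mathcal{A}_n\subset V_0$, $(1-x)v\in V_0$. I would then verify $W_{pt}\subset N$ via Lemma~\ref{group}: one needs $(1-x^{pt})\mathcal{A}_n\subset V_0$, which holds since $1-x^{pt}=(1-x^t)^p$ in characteristic $p$, and $\varphi_{p}(x^{t})v\in V_0$; but $(1-x)v\in V_0$ together with $xV_0=V_0$ forces $x^{j}v\equiv v\pmod{V_0}$ for every $j$, whence $\varphi_p(x^t)v\equiv pv=0\pmod{V_0}$ since $\mathcal{A}_n$ has exponent $p$. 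Thus $\widehat{\mathcal{L}_n}=\varprojlim_s\mathcal{L}_n/W_s$.

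Next I would identify the limit. For each $s$, $\mathcal{L}_n/W_s\cong\bigl(\mathcal{A}_n/(1-x^s)\mathcal{A}_n\bigr)\rtimes(\mathbb{Z}/s\mathbb{Z})\cong\mathbb{F}_p[\mathbb{Z}/s\mathbb{Z}]^n\rtimes\mathbb{Z}/s\mathbb{Z}$, with $\mathbb{Z}/s\mathbb{Z}$ acting by multiplication by the image of $x$, and for $s\mid s'$ the bonding map $\mathcal{L}_n/W_{s'}\to\mathcal{L}_n/W_s$ is the natural one, induced on the base by $R/(1-x^{s'})R\twoheadrightarrow R/(1-x^s)R$ and on the top by $\mathbb{Z}/s'\mathbb{Z}\twoheadrightarrow\mathbb{Z}/s\mathbb{Z}$. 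Since inverse limits commute with finite products and with the formation of semidirect products along compatible actions, one obtains $\widehat{\mathcal{L}_n}=\bigl(\varprojlim_s R/(1-x^s)R\bigr)^n\rtimes\bigl(\varprojlim_s\mathbb{Z}/s\mathbb{Z}\bigr)=\mathbb{F}_p[[\hat{\mathbb{Z}}]]^n\rtimes\hat{\mathbb{Z}}$. Finally I would note that $\varprojlim_s R/(1-x^s)R$ is indeed the profinite completion of the ring $R=\mathbb{F}_p[\mathbb{Z}]$: every finite-index ideal of $R$ is of the form $fR$ with $f(0)=1$, and then $x$ is a unit in the finite ring $R/fR$, so $x^m=1$ there for some $m$, i.e. $(1-x^m)R\subset fR$; hence the ideals $(1-x^s)R$ are cofinal among finite-index ideals and $\varprojlim_s R/(1-x^s)R=\varprojlim_s\mathbb{F}_p[\mathbb{Z}/s\mathbb{Z}]$.

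The only point requiring genuine care — the hard part, such as it is — is the cofinality step, which hinges on combining Lemma~\ref{normal_group} with the characteristic-$p$ identities $1-x^{pt}=(1-x^t)^p$ and $\varphi_p(x^t)v\equiv pv\equiv 0\pmod{V_0}$; once that is in place, everything reduces to routine bookkeeping with inverse limits of semidirect products.
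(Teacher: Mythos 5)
Your proof is correct and follows essentially the same route as the paper: the same cofinal family of normal subgroups with triples $(s,(1-x^s)\mathcal{A}_n,0)$ (verified cofinal via Lemma~\ref{normal_group} and the identities $1-x^{sp}=(1-x^s)^p$, $\varphi_p(x^s)v\equiv pv=0 \bmod V_0$), the same finite quotients $\mathbb{F}_p[\mathbb{Z}/s\mathbb{Z}]^n\rtimes\mathbb{Z}/s\mathbb{Z}$, and the same identification of $\varprojlim_s R/(1-x^s)R$ with the profinite completion of $R$. The only cosmetic difference is that you compute $\varprojlim_s\mathcal{L}_n/W_s$ directly by noting the bonding maps respect the semidirect decompositions, whereas the paper maps the candidate group $\mathbb{F}_p[[\hat{\mathbb{Z}}]]^n\rtimes\hat{\mathbb{Z}}$ onto the system and uses compactness plus triviality of the intersection of the kernels.
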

\begin{proof}
First recall that any ideal of $R$ is of the form $fR$ where $f\in\mathbb{F}_p[x]$ and $f(0)=1$. Now, any such irreducible $f$ divides $1-x^n$ for some $n$. Since $f$ is the product of irreducibles it follows that for any $f$ there is $n$ such that $f|1-x^n$. Therefore the profinite completion of the ring $R=\mathbb{F}_p[x]$ is $\varprojlim{}R/(1-x^n)R=\varprojlim{}\mathbb{F}_p[\mathbb{Z}/n\mathbb{Z}]$.

Consider now surjective maps  $\psi_m:\mathbb{F}_p[[\hat{\mathbb{Z}}]]^n\rtimes\hat{\mathbb{Z}}\rightarrow{}\mathbb{F}_p[\mathbb{Z}/m\mathbb{Z}]^n\rtimes\mathbb{Z}/m\mathbb{Z}=\mathcal{L}_n/N_m$, where the normal subgroup $N_m$ has triple $(m,(x-1)^m\mathcal{A}_n,0)$. The maps $\psi_m$ are compatible with the projective system $\mathcal{L}_n/N_m$ and therefore induce a map $\psi:\mathbb{F}_p[[\hat{\mathbb{Z}}]]^n\rtimes\hat{\mathbb{Z}}\rightarrow{}\varprojlim\mathcal{L}_n/N_m$. Then $\psi$ is surjective by the Corollary~$1.1.6$ in \cite{Zal}, since $\mathbb{F}_p[[\hat{\mathbb{Z}}]]^n\rtimes\hat{\mathbb{Z}}$ is compact. On the other hand $\ker\psi_m=\ker\pi_m\times{}m\hat{\mathbb{Z}}$, where $\pi_m:\mathbb{F}_p[[\hat{\mathbb{Z}}]]^n\rightarrow{}\mathbb{F}_p[\mathbb{Z}/m\mathbb{Z}]^n$ is the natural projection. Since the intersection of $\ker\pi_m$ is trivial, the intersection of $\ker\psi_m$ is also trivial which implies that $\psi$ is an isomorphism. 

It is left to show that normal subgroups $N_m$ form a basis of identity for the group $\mathcal{L}_n$, but it follows from Lemmas~\ref{3_1} and \ref{normal_group} that if $V$ is normal subgroup of finite index in $\mathcal{L}_n$ and $s>0$ the generator of its projection on $\bz$ then the normal subgroup with a triple $(sp,(x^{sp}-1)\mathcal{A}_n,0)$ lies in $V$. 
\end{proof}



\begin{theorem}\label{closedd}
Let $V$ be the subgroup of $\mathcal{L}_n$ with a triple $(s,V_0,v)$. Then it is profinitely closed in $\mathcal{L}_n$ if and only if either $s>0$ or $V=V_0$ is closed as a subset of $\mathbb{F}_p[[\hat{\mathbb{Z}}]]^n$ (that is, $V$ is equal to the intersection of its closure with $\mathcal{A}_n$).
\end{theorem}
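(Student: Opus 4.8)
The proof splits according to whether $s>0$ or $s=0$; throughout I would identify $\hat{\mathcal L}_n$ with $\mathbb{F}_p[[\hat{\mathbb{Z}}]]^n\rtimes\hat{\mathbb{Z}}$ via Lemma~\ref{prof1} and write $\pi\colon\mathcal L_n\to\hat{\mathcal L}_n$ for the canonical (injective) homomorphism, under which $\mathcal A_n=R^n$ lands in $\mathbb{F}_p[[\hat{\mathbb{Z}}]]^n$ and the $\mathbb{Z}$-factor lands in $\hat{\mathbb{Z}}$.

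Suppose first $s>0$. The claim is then that $V$ is always profinitely closed, and this I would deduce from Corollary~\ref{3_4} by checking that $V$ is finitely generated. Indeed, by Lemma~\ref{3_1} we have $x^sV_0=V_0$, so $V_0$ is an $R$-submodule of $\mathcal A_n^s\cong R^{ns}$ (Lemma~\ref{mod_structure}); since $R$ is Noetherian, $V_0$ is finitely generated as an $R$-module, say by $u_1,\dots,u_k$, and then $V$ is generated as a group by $u_1,\dots,u_k$ together with $(v,s)$, because conjugation by $(v,s)$ acts on $V_0$ as multiplication by $x^s$. Hence $V$ is finitely generated, and Corollary~\ref{3_4} applies.

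Now suppose $s=0$, so $V=V_0\subseteq\mathcal A_n$. I would first record two facts about the embedding $\pi$. First, $\mathbb{F}_p[[\hat{\mathbb{Z}}]]^n$ is the kernel of the continuous quotient map $\hat{\mathcal L}_n\to\hat{\mathbb{Z}}$, hence a closed subgroup of $\hat{\mathcal L}_n$, and $\pi^{-1}\bigl(\mathbb{F}_p[[\hat{\mathbb{Z}}]]^n\bigr)=\mathcal A_n$, since the $\mathbb{Z}$-component of $g\in\mathcal L_n$ vanishes precisely when the $\hat{\mathbb{Z}}$-component of $\pi(g)$ does (using injectivity of $\mathbb{Z}\hookrightarrow\hat{\mathbb{Z}}$). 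Second, $\pi(\mathcal A_n)$, identified with $R^n$, is dense in $\mathbb{F}_p[[\hat{\mathbb{Z}}]]^n$: this is the density of $R=\mathbb{F}_p[\mathbb{Z}]$ in $\mathbb{F}_p[[\hat{\mathbb{Z}}]]=\varprojlim R/(1-x^m)R$ already used in the proof of Lemma~\ref{prof1}. In particular $\overline{\pi(\mathcal A_n)}\cap\pi(\mathcal L_n)=\mathbb{F}_p[[\hat{\mathbb{Z}}]]^n\cap\pi(\mathcal L_n)=\pi(\mathcal A_n)$, so $\mathcal A_n$ itself is profinitely closed.

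Since $\pi(V)\subseteq\pi(\mathcal A_n)\subseteq\mathbb{F}_p[[\hat{\mathbb{Z}}]]^n$ and the last set is closed in $\hat{\mathcal L}_n$, the closure $\overline{\pi(V)}$ computed in $\hat{\mathcal L}_n$ agrees with the closure of $\pi(V)$ computed inside $\mathbb{F}_p[[\hat{\mathbb{Z}}]]^n$; denote it $\overline V$. Because $\overline V\subseteq\mathbb{F}_p[[\hat{\mathbb{Z}}]]^n$, we get $\overline V\cap\pi(\mathcal L_n)=\overline V\cap\bigl(\mathbb{F}_p[[\hat{\mathbb{Z}}]]^n\cap\pi(\mathcal L_n)\bigr)=\overline V\cap\pi(\mathcal A_n)$. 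Therefore $V$ is profinitely closed in $\mathcal L_n$, i.e.\ $\pi(V)=\overline V\cap\pi(\mathcal L_n)$, if and only if $\pi(V)=\overline V\cap\pi(\mathcal A_n)$, which under the identification of $\mathcal A_n$ with $R^n\subseteq\mathbb{F}_p[[\hat{\mathbb{Z}}]]^n$ is precisely the assertion that $V$ equals the intersection of its closure with $\mathcal A_n$. This yields both implications. I expect no deep obstacle here; the only care required is the topological bookkeeping in the $s=0$ case — that closure inside the closed subgroup $\mathbb{F}_p[[\hat{\mathbb{Z}}]]^n$ coincides with closure in $\hat{\mathcal L}_n$, and that $\pi^{-1}(\mathbb{F}_p[[\hat{\mathbb{Z}}]]^n)=\mathcal A_n$ — after which the equivalence is formal, while the $s>0$ case is immediate from Corollary~\ref{3_4}.
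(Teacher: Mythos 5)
Your proposal is correct and follows essentially the same route as the paper: for $s>0$ the paper likewise reduces to finite generation (via Theorem~\ref{3_3}, whose proof is exactly your Noetherian argument) and invokes Corollary~\ref{3_4}, and for $s=0$ it performs the same bookkeeping inside $\mathbb{F}_p[[\hat{\mathbb{Z}}]]^n\rtimes\hat{\mathbb{Z}}$, using compactness of $\mathbb{F}_p[[\hat{\mathbb{Z}}]]^n$ where you use its closedness, together with $\mathbb{F}_p[[\hat{\mathbb{Z}}]]^n\cap\mathcal{L}_n=\mathcal{A}_n$. No substantive difference.
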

\begin{proof}
If $s>0$ then $V$ is finitely generated by Theorem~\ref{3_3} and hence closed by Corollary~\ref{3_4}. If $s=0$ and $V=V_0\subset\mathcal{A}_n$ which we identify with $R^n$, let $\bar{V}$ denote the closure of $V$ in  $\mathbb{F}_p[[\hat{\mathbb{Z}}]]^n$. Then since $\mathbb{F}_p[[\hat{\mathbb{Z}}]]^n$ is compact $\bar{V}$ is also compact and hence is still closed in $\mathbb{F}_p[[\hat{\mathbb{Z}}]]^n\rtimes\hat{\mathbb{Z}}$. Since $\mathbb{F}_p[[\hat{\mathbb{Z}}]]^n\cap\mathcal{L}_n=\mathbb{F}_p[[\hat{\mathbb{Z}}]]^n\cap R^n$, and hence $\bar{V}\cap\mathcal{L}_n=\bar{V}\cap R^n=V$.
\end{proof}

\begin{corollary}
There are subgroups of $\mathcal{L}_1$ which are not closed in the profinite topology.
\end{corollary}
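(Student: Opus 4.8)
The plan is to read this off directly from Theorem~\ref{closedd}. That theorem tells us that a subgroup $V\le\mathcal{L}_1$ with triple $(s,V_0,v)$ is automatically profinitely closed when $s>0$, while for $s=0$ (so $V=V_0\subseteq\mathcal{A}_1$, which under the identification of Lemma~\ref{prof1} is an $\mathbb{F}_p$-subspace of $R$) it is profinitely closed if and only if $V_0$ equals the intersection with $\mathcal{A}_1$ of its closure in $\mathbb{F}_p[[\hat{\mathbb{Z}}]]$. Hence it suffices to produce a single proper $\mathbb{F}_p$-subspace $V_0\subsetneq R$ whose closure $\overline{V_0}$ in $\mathbb{F}_p[[\hat{\mathbb{Z}}]]$ is \emph{all} of $\mathbb{F}_p[[\hat{\mathbb{Z}}]]$; for then $\overline{V_0}\cap R=R\supsetneq V_0$, so the subgroup of $\mathcal{L}_1$ with triple $(0,V_0,0)$ is not profinitely closed. (Every subgroup of the elementary abelian group $\mathcal{A}_1$ is such a subspace, so this is a legitimate choice of subgroup.)

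Next I would translate ``$\overline{V_0}=\mathbb{F}_p[[\hat{\mathbb{Z}}]]$'' into an elementary arithmetic condition. By Lemma~\ref{prof1} and its proof, $\mathbb{F}_p[[\hat{\mathbb{Z}}]]=\varprojlim R/(1-x^n)R$ and the ideals $(1-x^n)R$ are cofinal among the finite-index ideals of $R$, so a subspace $V_0\le R$ is dense in $\mathbb{F}_p[[\hat{\mathbb{Z}}]]$ precisely when $V_0+(1-x^n)R=R$ for every $n\ge 1$. I would then take $V_0=\ker\phi$ where $\phi\colon R\to\mathbb{F}_p$ is the $\mathbb{F}_p$-linear functional sending a Laurent polynomial to its coefficient of $x^0$; this is a hyperplane, and it is not an ideal (for instance $x-x^2\in\ker\phi$ but $1-x\notin\ker\phi$), so there is no a priori reason for it to be closed. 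For density, observe that $1-x^{-n}=-x^{-n}(1-x^n)\in(1-x^n)R$, that $1=(1-x^{-n})+x^{-n}$, and that $x^{-n}\in\ker\phi$ for $n\ge1$; hence $1\in V_0+(1-x^n)R$, and since $V_0$ has codimension $1$ this forces $V_0+(1-x^n)R=R$ for all $n$. So $V_0$ is dense and proper, and Theorem~\ref{closedd} finishes the argument.

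There is essentially no real obstacle here once Theorem~\ref{closedd} and Lemma~\ref{prof1} are available; the only point requiring a little care is the equivalence between topological density of $V_0$ in $\mathbb{F}_p[[\hat{\mathbb{Z}}]]$ and the condition $V_0+(1-x^n)R=R$ for all $n$, which uses the cofinality of the $(1-x^n)R$ among finite-index ideals. I would also append a short remark that this same construction yields continuum-many non-closed subgroups of $\mathcal{A}_1$: if $\phi$ is replaced by any $\mathbb{F}_p$-functional for which the sequence $k\mapsto\phi(x^k)$ is not periodic (equivalently, $\phi$ does not factor through any $R/(1-x^n)R$), then $\ker\phi$ is again a dense proper hyperplane; for example every $\phi$ with $\phi(x^k)=0$ for $k<0$ and $\phi(1)=1$ has this property, and there are continuum-many such $\phi$, with pairwise distinct kernels.
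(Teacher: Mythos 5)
Your argument is correct: Theorem~\ref{closedd} reduces the problem (for triples with $s=0$) to exhibiting a proper subgroup of $\mathcal{A}_1$ that is dense in $\mathbb{F}_p[[\hat{\mathbb{Z}}]]$, your translation of density into the condition $V_0+(1-x^n)R=R$ for all $n$ is exactly right for the inverse-limit topology of Lemma~\ref{prof1}, and the identity $1=(1-x^{-n})+x^{-n}$ with $1-x^{-n}=-x^{-n}(1-x^n)$ together with the codimension-one observation does verify density of $\ker\phi$. This is the same overall strategy as the paper, which also produces a proper dense subgroup of $\mathcal{A}_1$; the difference is the witness. The paper uses an example attributed to D.~Segal, the subgroup $H$ of vectors supported on the prime coordinates, and asserts that $H$ surjects onto every $\mathbb{F}_p[\mathbb{Z}/n\mathbb{Z}]$; that assertion needs the distribution of primes in residue classes and, taken literally, is not correct for every $n$ (for $n=4$ the class of $0$ contains no prime, so the image misses the monomial $x^0$), although non-closedness of $H$ can still be salvaged via Dirichlet's theorem since, e.g., $x^1$ lies in the closure but not in $H$. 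Your hyperplane $\ker\phi$ buys a completely elementary, self-contained verification with no number-theoretic input, and your closing remark that any functional with non-periodic coefficient sequence gives a further example (yielding continuum many non-closed subgroups) is a genuine, correct strengthening not present in the paper.
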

\begin{proof}
 Here is an example due to D.~Segal. Consider in $\mathcal{A}_1=\sum_\mathbb{Z}(\mathbb{Z}/p\mathbb{Z})$ the subgroup $H$ such that $v\in H$ if and only if $v_i=0$ for $i$ not a prime number. For any $n$ the map $\mathbb{F}_p[\mathbb{Z}]\rightarrow\mathbb{F}_p[\mathbb{Z}/n\mathbb{Z}]$ maps $H$ onto $\mathbb{F}_p[\mathbb{Z}/n\mathbb{Z}]$. Thus the profinite closure of $H$ is $\mathbb{F}_p[[\hat{\mathbb{Z}}]]$, and so $H$ is not closed (that is, the intersection of the closure of $H$ with $\mathcal{L}_1$ is equal to $\mathcal{A}_1\neq H$).    
\end{proof}

We will now study the pro-$p$ completion of $\mathcal{L}_1$. The index of $B_{f,s}$ or $C_{f,s}$ is equal to $p^{\deg{f}}s$, therefore we need to consider the cases when $s$ is a power of $p$. In this case $1-x^{p^n}=(1-x)^{p^n}$, therefore $f=(1-x)^i$ for $0\leq{}i\leq{}p^n$. Thus we obtain a description of normal subgroups with index a power of $p$:

\begin{definition}
Define $B'_{i,n}=B_{(1-x)^i,p^n}$, $0\leq{}i\leq{}p^n$ and $C'_{i,n}=C_{(1-x)^i,p^n}$, $1\leq{}i\leq{}p^n$.  We have that $B'_{i,n}$ and $C'_{i,n}$ is the list of all normal subgroups of $\mathcal{L}_1$ with index a power of $p$.
\end{definition}

The following Theorem describes the lattice of normal subgroups of $\mathcal{L}_1$ of index a power of $p$.

\begin{theorem}
\begin{itemize}
\item $B'_{i,n}\subset{}B'_{j,m}$ if and only if $m\leq{}n$ and $j\leq{}i$. 
\item $C'_{i,n}\subset{}C'_{j,m}$ if and only if either $m<n$, $j<i$ or $m=n$ and $j=i$. 
\item $C'_{i,n}\subset{}B'_{j,m}$ if and only if $m\leq{}n$ and $j<i$.
\item $B'_{i,n}\subset{}C'_{j,m}$ if and only if $m<n$ and $j\leq{}i$.
\end{itemize}
In particular, the set of groups $B'_{p^n,n}=B_{1-x^{p^n},p^n}$ is a basis for the pro-$p$ topology.
\end{theorem}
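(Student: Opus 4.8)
The plan is to obtain each of the four equivalences, and then the basis statement, by feeding the data $s=p^{n}$, $f=(1-x)^{i}$ (and $s'=p^{m}$, $f'=(1-x)^{j}$) into the description of the lattice of normal subgroups of finite index in Theorem~\ref{prof} and, where needed, into Lemma~\ref{group}. Recall that by the Definition preceding the theorem every such subgroup is indeed of this form, because $1-x^{p^{k}}=(1-x)^{p^{k}}$ in $\mathbb{F}_p[x]$ forces any $f$ dividing $1-x^{p^{n}}$ to be a power of $1-x$. The entire computation then rests on the single identity, valid for $m\le n$,
\begin{equation*}
\varphi_{p^{n-m}}(x^{p^{m}})=\frac{1-x^{p^{n}}}{1-x^{p^{m}}}=\frac{(1-x)^{p^{n}}}{(1-x)^{p^{m}}}=(1-x)^{\,p^{n}-p^{m}},
\end{equation*}
after which every divisibility and every congruence occurring in Theorem~\ref{prof} collapses to an inequality between exponents of $1-x$, with the extra input that $p^{n}-p^{m}\ge 1$ exactly when $m<n$.

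I would then run through the cases in order. For $B'_{i,n}\subset B'_{j,m}$ the conditions $s'\mid s$ and $f'\mid f$ read $m\le n$ and $j\le i$. For $C'_{i,n}\subset B'_{j,m}$ the condition $f'\mid f(1-x)^{-1}$ reads $(1-x)^{j}\mid(1-x)^{i-1}$, i.e.\ $j<i$, and $s'\mid s$ reads $m\le n$. For $C'_{i,n}\subset C'_{j,m}$ the additional congruence $f\equiv\varphi_{p^{n-m}}(x^{p^{m}})f'\pmod{(1-x)f'}$ becomes, via the identity above, $(1-x)^{i}\equiv(1-x)^{\,p^{n}-p^{m}+j}\pmod{(1-x)^{j+1}}$; since $f'\mid f$ already gives $j\le i$, this holds iff $m=n$ and $i=j$, or else $m<n$ (so the right-hand side is $\equiv 0$) and $i>j$. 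For $B'_{i,n}\subset C'_{j,m}$, where the associated vector of $B'_{i,n}$ is $0$ and that of $C'_{j,m}$ is $(1-x)^{j-1}$, it is cleanest to argue directly from Lemma~\ref{group}: the condition is $0\equiv\varphi_{p^{n-m}}(x^{p^{m}})(1-x)^{j-1}\pmod{(1-x)^{j}}$, i.e.\ $(1-x)^{\,p^{n}-p^{m}+j-1}\equiv 0\pmod{(1-x)^{j}}$, which holds precisely when $p^{n}-p^{m}\ge 1$, that is $m<n$; together with $f'\mid f$, i.e.\ $j\le i$, this yields the fourth bullet.

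Finally, $B'_{p^{n},n}=B_{(1-x)^{p^{n}},p^{n}}$ has index $p^{n}\cdot p^{p^{n}}$, so these subgroups have unbounded finite index; by the first bullet $B'_{p^{n},n}\subset B'_{j,m}$ whenever $m\le n$ (then $j\le p^{m}\le p^{n}$), and by the fourth bullet $B'_{p^{n},n}\subset C'_{j,m}$ whenever $m<n$ (then $j\le p^{m}<p^{n}$). Hence every normal subgroup of $p$-power index contains some $B'_{p^{n},n}$, so $\{B'_{p^{n},n}\}_{n\ge 1}$ is a basis of the pro-$p$ topology. I do not foresee a genuine obstacle; the one step demanding care is the $C'\subset C'$ equivalence, where one must read off from the congruence modulo $(1-x)^{j+1}$ that the exponents must be \emph{equal} when $m=n$ but satisfy a \emph{strict} inequality when $m<n$ — and this dichotomy is exactly what the Frobenius identity for $\varphi_{p^{n-m}}(x^{p^{m}})$ delivers.
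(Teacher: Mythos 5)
Your proof is correct and in substance it is the route the paper takes: the paper's own proof is just ``Follows from Theorem~\ref{prof}'', i.e.\ substitute $s=p^n$, $f=(1-x)^i$ and use $1-x^{p^n}=(1-x)^{p^n}$, so that $\varphi_{p^{n-m}}(x^{p^m})=(1-x)^{p^n-p^m}$ and everything reduces to comparing exponents of $1-x$, exactly as you do. The one place where you genuinely deviate is the fourth inclusion $B'_{i,n}\subset C'_{j,m}$, which you compute directly from Lemma~\ref{group} instead of quoting the fourth item of Theorem~\ref{prof}; this is a good call, because that item as printed demands that $s/s'$ be \emph{even}, which is the $p=2$ instance of the correct condition $p\mid s/s'$ (equivalently $(1-x)\mid\varphi_{s/s'}(x^{s'})$, since $\varphi_{s/s'}(1)=s/s'$ in $\mathbb{F}_p$), and read literally it would forbid $B'_{i,n}\subset C'_{j,m}$ for every odd $p$, contradicting the bullet being proved. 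Your direct verification that $\varphi_{p^{n-m}}(x^{p^m})(1-x)^{j-1}=(1-x)^{p^n-p^m+j-1}\equiv 0 \pmod{(1-x)^j}$ exactly when $m<n$ recovers the stated bullet and in effect repairs that slip. The other three cases and the cofinality argument for the basis statement (every $B'_{j,m}$, resp.\ $C'_{j,m}$, contains $B'_{p^n,n}$ for $n\geq m$, resp.\ $n>m$, since $j\leq p^m\leq p^n$) are exactly as in the paper.
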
 
\begin{proof}
Follows from Theorem~\ref{prof}
\end{proof}
\begin{corollary}\label{pro-p closed}
The subgroup $V$ of finite index in $\mathcal{L}_1$ is pro-$p$ closed \Iff its triple $(s,V_0,v)$ satisfies that $s=p^t\bz$ for some $t$ and $V_0\supset (1-x^{p^n})\mathcal{A}_1$ for some $n\geq t$. 
\end{corollary}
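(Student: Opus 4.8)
The plan is to translate ``closed in the pro-$p$ topology'' into the single concrete inclusion $B_{1-x^{p^n},p^n}\subseteq V$ for some $n$, and then to decode that inclusion at the level of the triple $(s,V_0,v)$ using the containment criterion of Lemma~\ref{group}. For the translation I would use the standard topological fact that in any topological group a finite-index subgroup is closed if and only if it is open, and that an open subgroup is exactly one containing a basic open neighbourhood of the identity. Since $V$ has finite index and the subgroups $B'_{p^n,n}=B_{1-x^{p^n},p^n}$ form a basis for the pro-$p$ topology (by the theorem just proved), this gives at once: $V$ is pro-$p$ closed if and only if $B_{1-x^{p^n},p^n}\subseteq V$ for some $n\geq 0$. (Alternatively one runs this through Proposition~\ref{pro2}: a finite-index pro-$p$-closed subgroup must, by the remark after Proposition~\ref{tree}, contain the stabilizer of some level of its coset tree, which is normal of index a power of $p$ and hence contains some $B'_{p^n,n}$; conversely, if $V\supseteq B'_{p^m,m}$ one refines $\mathcal{L}_1\supset V$ to a subnormal $p$-chain through $V$ inside the finite $p$-group $\mathcal{L}_1/B'_{p^m,m}$ and prolongs it below $V$, using that $V\cong\mathcal{L}_k$ for some $k$ by Theorem~\ref{3_3} and is therefore residually $p$.)

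Next I would apply Lemma~\ref{group} with the smaller group $B_{1-x^{p^n},p^n}$, whose triple is $(p^n,(1-x^{p^n})\mathcal{A}_1,0)$. The criterion then says that $B_{1-x^{p^n},p^n}\subseteq V$ holds precisely when $s\mid p^n$, when $(1-x^{p^n})\mathcal{A}_1\subseteq V_0$, and when $\varphi_{p^n/s}(x^s)\,v\in V_0$. The first of these forces $s=p^t$ with $t\leq n$. Thus if $V$ is pro-$p$ closed then $s$ is a power of $p$ and $(1-x^{p^n})\mathcal{A}_1\subseteq V_0$ for some $n\geq t$, which is the ``only if'' half of the corollary.

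For the converse I would assume $s=p^t$ and $(1-x^{p^{n_0}})\mathcal{A}_1\subseteq V_0$ for some $n_0\geq t$ (this may always be assumed with $n_0\geq t$, since $(1-x^{p^{n+1}})\mathcal{A}_1\subseteq(1-x^{p^n})\mathcal{A}_1$), and show that $n=n_0+1$ works in the reduction above. The key computation is the characteristic-$p$ identity
\[
\varphi_{p^{n-t}}(x^{p^t})=\frac{1-x^{p^n}}{1-x^{p^t}}=(1-x)^{p^n-p^t},
\]
together with $(1-x)^{p^{n_0}}v\in(1-x^{p^{n_0}})\mathcal{A}_1\subseteq V_0$: since $p^{n_0+1}-p^t\geq p^{n_0+1}-p^{n_0}\geq p^{n_0}$, the third condition $(1-x)^{p^n-p^t}v\in V_0$ follows, while $p^t\mid p^n$ and $(1-x^{p^n})\mathcal{A}_1\subseteq(1-x^{p^{n_0}})\mathcal{A}_1\subseteq V_0$ are immediate. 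Hence $B_{1-x^{p^{n_0+1}},p^{n_0+1}}\subseteq V$, so $V$ is pro-$p$ closed.

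The only point needing care is the very first step --- making the passage from the abstract definition of ``pro-$p$ closed'' to the concrete inclusion $B_{1-x^{p^n},p^n}\subseteq V$ fully rigorous. The open-equals-closed dichotomy for finite-index subgroups combined with the basis statement makes it a one-liner; the Proposition~\ref{pro2} route also works but needs the extra coset-tree and residual-$p$-finiteness bookkeeping sketched above. Everything after that step is routine manipulation of the triple via Lemma~\ref{group} plus the elementary identity in $\mathbb{F}_p[x]$.
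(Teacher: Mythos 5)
Your proof is correct and follows essentially the route the paper intends: the corollary is stated without proof as an immediate consequence of the preceding theorem that the groups $B'_{p^n,n}=B_{1-x^{p^n},p^n}$ form a basis for the pro-$p$ topology, so that for a finite-index $V$ being pro-$p$ closed is equivalent (closed $=$ open at finite index) to containing some $B_{1-x^{p^n},p^n}$, which one then decodes through the containment criterion of Lemma~\ref{group}. Your handling of the condition on $v$ via $\varphi_{p^{n-t}}(x^{p^t})=(1-x)^{p^n-p^t}$ and the choice $n=n_0+1$ is exactly the needed computation, so nothing is missing.
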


\begin{lemma}\label{3_19}
The pro-$p$ completion of $\mathcal{L}_n$ is $\mathbb{F}_p[[{\mathbb{Z}_p}]]^n\rtimes{\mathbb{Z}_p}$, where ${\mathbb{Z}_p}$ is the pro-$p$ completion of $\mathbb{Z}$ and $\mathbb{F}_p[[{\mathbb{Z}_p}]]$ is the pro-$p$ completion of the ring $R=\mathbb{F}_p[\mathbb{Z}]$, which is also the projective limit $\varprojlim{}\mathbb{F}_p[\mathbb{Z}/p^m\mathbb{Z}]$.
\end{lemma}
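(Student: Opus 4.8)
The plan is to imitate the proof of Lemma~\ref{prof1}, replacing its defining filtration by one adapted to the pro-$p$ topology. For $m\geq 1$ let $N'_m$ be the subgroup of $\mathcal{L}_n$ with triple $(p^m,(1-x^{p^m})\mathcal{A}_n,0)$. First I would check, via Lemma~\ref{normal_group} (the required conditions $xV_0=V_0$, $(1-x^{p^m})\mathcal{A}_n\subseteq V_0$ and $(1-x)\cdot 0\in V_0$ being immediate), that $N'_m$ is normal, of index $p^m\,[\mathcal{A}_n:(1-x^{p^m})\mathcal{A}_n]=p^{m+np^m}$, a power of $p$; and, by the inclusion criterion of Lemma~\ref{group}, that the $N'_m$ form a descending chain. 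Since $1-x^{p^m}=(1-x)^{p^m}$ in characteristic $p$, the first Lemma of the paper gives $R/(1-x^{p^m})R\cong\mathbb{F}_p[x]/(1-x^{p^m})\mathbb{F}_p[x]$, which is the reduction $\mathbb{F}_p[\mathbb{Z}]\to\mathbb{F}_p[\mathbb{Z}/p^m\mathbb{Z}]$; hence $\mathcal{L}_n/N'_m\cong\mathbb{F}_p[\mathbb{Z}/p^m\mathbb{Z}]^n\rtimes\mathbb{Z}/p^m\mathbb{Z}$ with the shift action, compatibly in $m$.

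The substantive step is to show that $\{N'_m\}_m$ is a base of neighbourhoods of the identity for the pro-$p$ topology, i.e. that every normal subgroup $V$ of $\mathcal{L}_n$ of index a power of $p$ contains some $N'_m$. Such a $V$ has finite index, so its triple $(s,V_0,v)$ has $s>0$ (otherwise $V\subseteq\mathcal{A}_n$, of infinite index), and as $s$ divides $[\mathcal{L}_n:V]$ we get $s=p^t$ for some $t$. By Lemma~\ref{normal_group}, $(1-x^{p^t})\mathcal{A}_n\subseteq V_0$ and $(1-x)v\in V_0$, and $V_0$ is an $R$-submodule of $\mathcal{A}_n=R^n$. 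I would then verify, for any $m>t$, the three clauses of the inclusion criterion of Lemma~\ref{group} for $N'_m\subseteq V$: $p^t\mid p^m$ is clear; $(1-x^{p^m})\mathcal{A}_n=(1-x)^{p^m-p^t}(1-x^{p^t})\mathcal{A}_n\subseteq(1-x^{p^t})\mathcal{A}_n\subseteq V_0$; and, using $\varphi_{p^{m}/p^{t}}(x^{p^t})=(1-x^{p^m})/(1-x^{p^t})=(1-x)^{p^m-p^t}$ in characteristic $p$, the vector clause reads $(1-x)^{p^m-p^t}v=(1-x)^{p^m-p^t-1}\bigl((1-x)v\bigr)\in V_0$, which holds because $p^m-p^t\geq 1$ and $V_0$ is stable under multiplication by $1-x$. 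Hence $N'_m\subseteq V$, and therefore $\hat{\mathcal{L}_n}^{(p)}=\varprojlim_m\mathcal{L}_n/N'_m$.

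It then remains to identify this inverse limit, which I would do word for word as in Lemma~\ref{prof1}. The natural projections $\mathbb{F}_p[[\mathbb{Z}_p]]\to\mathbb{F}_p[\mathbb{Z}/p^m\mathbb{Z}]$ and $\mathbb{Z}_p\to\mathbb{Z}/p^m\mathbb{Z}$ assemble into a compatible family of surjections $\psi_m\colon\mathbb{F}_p[[\mathbb{Z}_p]]^n\rtimes\mathbb{Z}_p\to\mathbb{F}_p[\mathbb{Z}/p^m\mathbb{Z}]^n\rtimes\mathbb{Z}/p^m\mathbb{Z}\cong\mathcal{L}_n/N'_m$, hence a homomorphism $\psi\colon\mathbb{F}_p[[\mathbb{Z}_p]]^n\rtimes\mathbb{Z}_p\to\varprojlim_m\mathcal{L}_n/N'_m$; it is onto because its source is compact (Corollary~$1.1.6$ in \cite{Zal}), and one-to-one because $\ker\psi_m=\ker\pi_m\times p^m\mathbb{Z}_p$, where $\pi_m\colon\mathbb{F}_p[[\mathbb{Z}_p]]^n\to\mathbb{F}_p[\mathbb{Z}/p^m\mathbb{Z}]^n$ is the natural projection, and $\bigcap_m\ker\pi_m=0$, $\bigcap_m p^m\mathbb{Z}_p=0$. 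The parenthetical identifications (that $\mathbb{Z}_p=\varprojlim\mathbb{Z}/p^m\mathbb{Z}$ and $\mathbb{F}_p[[\mathbb{Z}_p]]=\varprojlim\mathbb{F}_p[\mathbb{Z}/p^m\mathbb{Z}]$ are the pro-$p$ completions of $\mathbb{Z}$ and of $R$) follow as in Lemma~\ref{prof1}, restricting the indices there to powers of $p$ and using that every ideal $(1-x)^kR$ contains $(1-x^{p^m})R$ once $p^m\geq k$.

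As this is essentially a transcription of Lemma~\ref{prof1}, there is no genuine obstacle; the only spot asking for a little thought is the cofinality step, where one must exploit that normality of $V$ together with $s=p^t$ forces $(1-x^{p^t})\mathcal{A}_n\subseteq V_0$ — precisely what squeezes $V_0$ between the modules $(1-x^{p^m})\mathcal{A}_n$ for suitable $m$ and lets the condition $(1-x)v\in V_0$ dispose of the vector part.
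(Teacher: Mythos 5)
Your proposal is correct and follows essentially the same route as the paper, whose proof of this lemma is literally ``repeat the proof of Lemma~\ref{prof1} with $\hat{\mathbb{Z}}$ replaced by $\mathbb{Z}_p$'': you use the filtration $N'_m$ with triples $(p^m,(1-x^{p^m})\mathcal{A}_n,0)$ (which, since $1-x^{p^m}=(1-x)^{p^m}$ in characteristic $p$, is exactly the $p$-power analogue of the paper's $N_m$), identify the quotients, and run the same compactness/trivial-kernel argument. Your explicit cofinality verification via Lemmas~\ref{group} and~\ref{normal_group} is just a spelled-out version of the paper's closing remark, so there is nothing to add.
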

\begin{proof}
The proof repeats almost verbatim the proof of Lemma~\ref{prof1}, with $\hat{\bz}$ replaced by $\bz_p$.
\end{proof}

\begin{lemma}\label{need0}
Let $V$ be a subgroup of $\mathcal{L}_n$ with a triple $(s,V_0,v)$ and $s>0$. Then $V$ is closed in the pro-$p$ topology if and only if $V_0$ is equal to the intersection of its closure in $\mathbb{F}_p[[\mathbb{Z}_p]]^n$ with $R^n$, and for any $0<q<s$ such that $|q|_p\geq |s|_p$ 
\[
\frac{x^q-1}{x^s-1}v\not\in \mathcal{A}_n+\bar{V_0},
\]
where $\bar{V_0}$ is the closure of $V_0$ in $\mathbb{F}_p[[\mathbb{Z}_p]]^n$ and $|q|_p$ is the highest power of $p$ that divides $q$.
\end{lemma}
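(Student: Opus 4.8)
The plan is to work inside the pro-$p$ completion $\hat{\mathcal{L}}_n^{(p)}=\mathbb{F}_p[[\mathbb{Z}_p]]^n\rtimes\mathbb{Z}_p$ supplied by Lemma~\ref{3_19}, identifying $\mathcal{L}_n=R^n\rtimes\mathbb{Z}$ with its (dense) image under $\pi_p$, which is injective since $\mathcal{L}_n$ is residually $p$-finite. By the definition of pro-$p$ closedness (equivalently, by Proposition~\ref{pro2}) $V$ is closed in the pro-$p$ topology if and only if $V=\overline V\cap\mathcal{L}_n$, where $\overline V$ is the closure of $V$ in $\hat{\mathcal{L}}_n^{(p)}$; so the whole argument reduces to describing $\overline V$ and intersecting with $\mathcal{L}_n$. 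Throughout one uses that $\mathbb{F}_p[[\mathbb{Z}_p]]=\varprojlim\mathbb{F}_p[x]/(x-1)^{p^m}$ is the complete discrete valuation ring $\mathbb{F}_p[[t]]$ with $t=x-1$, so that $x^m-1=(1+t)^m-1$ has $t$-adic valuation $p^{v_p(m)}$; in particular $x^s-1$ divides $x^q-1$ there exactly when $v_p(q)\ge v_p(s)$, i.e. when $|q|_p\ge|s|_p$, which is precisely why that hypothesis is placed on $q$, and $\tfrac{x^\beta-1}{x^s-1}\in\mathbb{F}_p[[\mathbb{Z}_p]]$ for all $\beta\in s\mathbb{Z}_p$.

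\textbf{Step 1: compute $\overline V$.}
Since $(v,s)^k=\bigl(\tfrac{x^{ks}-1}{x^s-1}v,\,ks\bigr)$ we have $V=\{\bigl(\tfrac{x^{ks}-1}{x^s-1}v+w,\,ks\bigr):k\in\mathbb{Z},\,w\in V_0\}$. The map $\beta\mapsto x^\beta=(1+t)^\beta$ is continuous on $\mathbb{Z}_p$ (because $(1+t)^{p^M}=1+t^{p^M}$), hence so is $\beta\mapsto\tfrac{x^\beta-1}{x^s-1}$ on $s\mathbb{Z}_p=\overline{s\mathbb{Z}}$. Letting $k\to\alpha\in\mathbb{Z}_p$ yields $\bigl(\tfrac{x^{s\alpha}-1}{x^s-1}v,\,s\alpha\bigr)\in\overline V$; conversely, an element of $\overline V$ with $\mathbb{Z}_p$-component $0$ is a limit of $\bigl(\tfrac{x^{k_is}-1}{x^s-1}v+w_i,\,k_is\bigr)$ with $k_is\to 0$, whence $k_i\to0$, $\tfrac{x^{k_is}-1}{x^s-1}v\to0$, and $w_i$ converges into $\overline{V_0}$. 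Thus the kernel of $\overline V\to\mathbb{Z}_p$ is exactly $\overline{V_0}$, its image is $s\mathbb{Z}_p$, and
\[
\overline V=\Bigl\{\bigl(\tfrac{x^{s\alpha}-1}{x^s-1}v+w,\ s\alpha\bigr):\ \alpha\in\mathbb{Z}_p,\ w\in\overline{V_0}\Bigr\}.
\]

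\textbf{Step 2: intersect with $\mathcal{L}_n$ and reduce the range of $q$.}
An element above lies in $\mathcal{L}_n=R^n\rtimes\mathbb{Z}$ iff $s\alpha\in\mathbb{Z}$, say $s\alpha=q$ (so $q\in\mathbb{Z}$, $|q|_p\ge|s|_p$, and $\alpha=q/s$ is determined), and $\tfrac{x^q-1}{x^s-1}v+w\in R^n$. If $s\mid q$ then $\tfrac{x^q-1}{x^s-1}v=\varphi_{q/s}(x^s)v\in R^n$, so the elements obtained are $\{\varphi_{q/s}(x^s)v+w:w\in\overline{V_0}\cap R^n\}$, and these all lie in $V$ iff $\overline{V_0}\cap R^n=V_0$. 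If $s\nmid q$, the element has $\mathbb{Z}$-component not in $s\mathbb{Z}$, hence is never in $V$, and such an element exists iff $\tfrac{x^q-1}{x^s-1}v\in R^n+\overline{V_0}=\mathcal{A}_n+\overline{V_0}$ (as $\overline{V_0}$ is a subgroup). Therefore $\overline V\cap\mathcal{L}_n=V$ iff $\overline{V_0}\cap R^n=V_0$ and $\tfrac{x^q-1}{x^s-1}v\notin\mathcal{A}_n+\overline{V_0}$ for every $q\in\mathbb{Z}$ with $|q|_p\ge|s|_p$ and $s\nmid q$. Finally, writing $q=s\ell+q'$ with $0\le q'<s$, one has $|q'|_p\ge|s|_p$ and $q'=0$ iff $s\mid q$; the identity $\tfrac{x^q-1}{x^s-1}=x^{s\ell}\tfrac{x^{q'}-1}{x^s-1}+\varphi_\ell(x^s)$, together with $\varphi_\ell(x^s)\in R$ and the fact that $x^{s\ell}$ is a unit of $R$ fixing both $\mathcal{A}_n$ and $\overline{V_0}$ (the latter because $x^sV_0=V_0$), shows $\tfrac{x^q-1}{x^s-1}v\in\mathcal{A}_n+\overline{V_0}$ iff $\tfrac{x^{q'}-1}{x^s-1}v\in\mathcal{A}_n+\overline{V_0}$. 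So it is enough to test $0<q<s$, which is exactly the asserted criterion.

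\textbf{Main obstacle.}
I expect the delicate point to be Step 1 — pinning down $\overline V$ — and in particular the two claims that the base part of $\overline V$ is no larger than $\overline{V_0}$ and that $\overline V$ surjects onto $s\mathbb{Z}_p$ with each fibre a single coset of $\overline{V_0}$; these rest on the continuity of $\beta\mapsto\tfrac{x^\beta-1}{x^s-1}$ and on $ks\to0\Rightarrow k\to0$ in $\mathbb{Z}_p$. Once $\overline V$ is described, the rest is valuation bookkeeping (making sense of $\tfrac{x^q-1}{x^s-1}$ and the reduction $q\mapsto q'$) and the translation between pro-$p$ closedness and the equality $\overline V\cap\mathcal{L}_n=V$.
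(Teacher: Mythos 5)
Your proof is correct and takes essentially the same route as the paper's: both work inside the pro-$p$ completion $\mathbb{F}_p[[\mathbb{Z}_p]]^n\rtimes\mathbb{Z}_p$ of Lemma~\ref{3_19}, use $(v,s)^k=(\varphi_k(x^s)v,ks)$ together with the continuity of $k\mapsto\frac{x^{ks}-1}{x^s-1}$ to control limits, and reduce a general exponent $c$ (or $q$) to the range $0<q<s$ by division with remainder. The only difference is organizational — you first describe $\overline{V}$ completely and then intersect with $\mathcal{L}_n$, whereas the paper analyzes elements of $\overline{V}\cap\mathcal{L}_n$ directly and exhibits explicit approximating sequences for the converse — which does not change the substance of the argument.
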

\begin{proof}

Let $\bar{V}$ be the closure of $V$ in $\mathbb{F}_p[[{\mathbb{Z}_p}]]^n\rtimes{\mathbb{Z}_p}$, and $\bar{V_0}$ the closure of $V_0$ in $\mathbb{F}_p[[{\mathbb{Z}_p}]]^n$. Then $\bar{V_0}=\bar{V}\cap\mathbb{F}_p[[{\mathbb{Z}_p}]]^n$, hence if $V=\bar{V}\cap\mathcal{L}_n$ then \[V_0\subset\bar{V_0}\cap\mathcal{A}_n\subset\bar{V}\cap\mathcal{A}_n\subset V\cap\mathcal{A}_n=V_0.\]

Suppose that $V_0$ is closed in $\mathbb{F}_p[[{\mathbb{Z}_p}]]^n$. 
Suppose that for any $0<q<s$ such that $|q|_p\geq |s|_p$ 
\[
\frac{x^q-1}{x^s-1}v\not\in \mathcal{A}_n+\bar{V_0}.
\]
Let $(w,c)\in\mathcal{L}_n$ be some element in  $\bar{V}$. It means that there is a sequence $u_i\in V_0$ and $k_i\in\bz$ such that $(u_i+\varphi_{k_i}(x^s)v,k_is)$ tends to $(w,c)$, or $(u_i+\varphi_{k_i}(x^s)v-x^{k_is-c}w,k_is-c)$ tends to identity. This happens if and only if 
\[
u_i+\varphi_{k_i}(x^s)v-x^{k_is-c}w\to 0\text{ in }\mathbb{F}_p[[{\mathbb{Z}_p}]]^n,
\]  
and $k_is-c\to 0$ in $\bz_p$, which means that one can write $k_is=p^{m_i}r_i+c$ and $m_i\to\infty$ as $i\to\infty$. In follows in particular that $|c|_p\geq |s|_p$. Note that $x^{k_is-c}=x^{p^{m_i}r_i}\to 1$ in $\mathbb{F}_p[[{\mathbb{Z}_p}]]=\varprojlim{}\mathbb{F}_p[\mathbb{Z}/p^m\mathbb{Z}]$ since $x^{p^{m_i}r_i}=1$ in $\mathbb{F}_p[\mathbb{Z}/p^m\mathbb{Z}]$ for $m\leq m_i$. For the same reason, 
\[
\varphi_{k_i}(x^s)=\frac{x^{k_is}-1}{x^s-1}=\frac{x^{p^{m_i}r_i}x^{c}-1}{x^{s}-1}\to\frac{x^c-1}{x^s-1},
\]
considered as a sequence in the field of fractions of $\mathbb{F}_p[[{\mathbb{Z}_p}]]$. It follows that $u_i$ also has some limit $u\in\bar{V_0}$ and we have the equality
\[
u+\frac{x^c-1}{x^s-1}v-w=0.
\]
Let $c=ks+q$ for $0\leq q<s$. Since $|c|_p\geq |s|_p$ we have that $|q|_p\geq |s|_p$. We also have that $x^c-1=(x^s-1+1)^kx^q-1$, and therefore $x^c-1=g(x)(x^s-1)+x^q-1$ for some polynomial $g$. Thus 
\[
\frac{x^q-1}{x^s-1}v=\frac{x^c-1}{x^s-1}v-g(x)v=w-g(x)v-u\in R^n+\bar{V_0},
\]
which is by assumption impossible unless $q=0$. Thus $c=ks$ and $w=u+\frac{x^{ks}-1}{x^s-1}v$, from which it follows that $u\in\bar{V_0}\cap\mathcal{A}_n=V_0$ and therefore 
$(w,c)=(u,0)(v,s)^k\in V$.

Conversely, suppose that there is $0<q<s$ such that $|q|_p\geq |s|_p$ and
\[
\frac{x^q-1}{x^s-1}v=w+u,
\]
for $w\in \mathcal{A}_n$ and $u\in\bar{V_0}$. Choose a sequence $u_i\in V_0$ that tends to $u$, and a sequence $k_i\in\bz$ such that $k_is\to q$ in $\bz_p$ (it is possible to find since $|q|_p\geq |s|_p$). Then $(u_i,0)(v,s)^{k_i}\in V$ but tend to $(w,q)$ by reasoning similar to previous considerations. Hence $V$ is not closed.
\end{proof}

\begin{corollary}\label{need}
$\mathbb{F}_p[[\mathbb{Z}_p]]$ is isomorphic to $\mathbb{F}_p[[t]]$, the ring of formal power series in $t$ over $\mathbb{F}_p$. The map 
\[
a=\sum_{i\geq{}0}a_ip^i\mapsto(1+t)^a=\prod_{i\geq{}0}(1+t^{p^i})^{a_i},
\]
where $a_i\in\{0,1,\dots,p-1\}$, is a well-defined isomorphism of $\mathbb{Z}_p$ into the multiplicative group of $\mathbb{F}_p[[t]]$ and the action of $\mathbb{Z}_p$ on $\mathbb{F}_p[[t]]$ in $\mathbb{F}_p[[{\mathbb{Z}_p}]]^n\rtimes{\mathbb{Z}_p}$ ($\mathbb{F}_p[[{\mathbb{Z}_p}]]$ identified with $\mathbb{F}_p[[t]]$) in Lemma~\ref{3_19} is given by the formula $a*f(t)=(1+t)^af(t)$.
\end{corollary}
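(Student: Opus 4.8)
The plan is to treat the three assertions of the corollary in order: the ring isomorphism $\mathbb{F}_p[[\mathbb{Z}_p]]\cong\mathbb{F}_p[[t]]$, the fact that $a\mapsto(1+t)^a$ is a well-defined injective homomorphism $\mathbb{Z}_p\to\mathbb{F}_p[[t]]^\times$, and the identification of the $\mathbb{Z}_p$-action. For the first point, recall from Lemma~\ref{3_19} that $\mathbb{F}_p[[\mathbb{Z}_p]]=\varprojlim_m\mathbb{F}_p[\mathbb{Z}/p^m\mathbb{Z}]=\varprojlim_m\mathbb{F}_p[x]/(x^{p^m}-1)$. In characteristic $p$ one has $x^{p^m}-1=(x-1)^{p^m}$, so the substitution $t=x-1$ gives a ring isomorphism $\mathbb{F}_p[x]/(x^{p^m}-1)\cong\mathbb{F}_p[t]/(t^{p^m})$; these are compatible with the transition maps, which send $x\mapsto x$ (hence $t\mapsto t$) and are therefore the canonical surjections $\mathbb{F}_p[t]/(t^{p^{m+1}})\to\mathbb{F}_p[t]/(t^{p^m})$. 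Since the exponents $p^m$ are cofinal in $\mathbb{N}$, the inverse limit is $\mathbb{F}_p[[t]]$, with $x$ corresponding to $1+t$.

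For the second point, note that $(1+t)^{p^i}=1+t^{p^i}$ in $\mathbb{F}_p[[t]]$ by the Frobenius identity, so for $a\in\mathbb{Z}$ with base-$p$ digits $a_i$ we indeed have $(1+t)^a=\prod_i(1+t^{p^i})^{a_i}$. For $a\in\mathbb{Z}_p$ each factor $(1+t^{p^i})^{a_i}$ is congruent to $1$ modulo $t^{p^i}$, so the partial products form a Cauchy sequence in the $t$-adic topology and the infinite product converges; since the base-$p$ expansion of $a$ is unique, this defines $(1+t)^a$ unambiguously, and it extends the map already defined on $\mathbb{Z}$. That it is a homomorphism follows from the identity $(1+t)^{a+b}=(1+t)^a(1+t)^b$ on $\mathbb{Z}$ together with $t$-adic continuity of multiplication and density of $\mathbb{Z}$ in $\mathbb{Z}_p$ (one checks that if $a_n\equiv a_n' \bmod p^m$ then $(1+t)^{a_n}\equiv(1+t)^{a_n'}\bmod t^{p^m}$, so the limit is independent of the approximating integer sequence). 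For injectivity, if $a=\sum_{i\geq i_0}a_ip^i$ with $a_{i_0}\neq0$, the coefficient of $t^{p^{i_0}}$ in $(1+t)^a$ equals $a_{i_0}\bmod p\neq0$, because the factors with index $<i_0$ are $1$ and those with index $>i_0$ contribute only in degrees $\geq p^{i_0+1}$; hence $(1+t)^a\neq1$.

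For the third point, the $\mathbb{Z}$-action on $\mathcal{A}_n=R^n$ out of which the semidirect product in Lemma~\ref{3_19} is built is multiplication by $x$; passing to completions, the generator $1\in\mathbb{Z}_p$ acts on $\mathbb{F}_p[[\mathbb{Z}_p]]^n=\mathbb{F}_p[[t]]^n$ by multiplication by $x=1+t$. By the second point the assignment $a\mapsto(\text{multiplication by }(1+t)^a)$ is a continuous homomorphism $\mathbb{Z}_p\to\Aut(\mathbb{F}_p[[t]]^n)$ agreeing with the given action on the dense subgroup $\mathbb{Z}$, and such a continuous extension of the $\mathbb{Z}$-action is unique; therefore $a*f(t)=(1+t)^af(t)$.

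I expect the main obstacle to be the second point: making precise that the product $\prod_i(1+t^{p^i})^{a_i}$ converges, depends only on $a\in\mathbb{Z}_p$ (not on a chosen integer approximant), and is multiplicative — in other words that $(1+t)^{(-)}$ descends continuously from $\mathbb{Z}$ to $\mathbb{Z}_p$. Everything else is an immediate consequence of the identity $x^{p^m}-1=(x-1)^{p^m}$ and the universal property of the inverse limit.
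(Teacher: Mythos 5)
Your argument is correct and follows essentially the same route as the paper: identify $\mathbb{F}_p[[\mathbb{Z}_p]]=\varprojlim R/(1-x^{p^m})R$ with $\varprojlim R/(1-x)^jR\cong\mathbb{F}_p[[t]]$ via $t=x-1$, using $1-x^{p^m}=(1-x)^{p^m}$ in characteristic $p$. The remaining claims about $a\mapsto(1+t)^a$ and the action, which you verify via Frobenius, $t$-adic convergence and continuity, are exactly the points the paper dismisses as obvious, so your write-up simply supplies those details.
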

\begin{proof}
Note that any element of $\mathbb{F}_p[[\mathbb{Z}_p]]=\varprojlim{}R/(1-x^{p^n})R=\varprojlim R/(1-x)^{p^n}R=\varprojlim_j R/(1-x)^jR$ can be written as $\sum_{i\geq{}0}\varepsilon_i(x-1)^i$, for $\varepsilon_i\in\mathbb{F}_p$. Then the map $\sum_{i\geq{}0}\varepsilon_i(x-1)^i\mapsto\sum_{i\geq{}0}\varepsilon_it^i$ gives an isomorphism of rings  $\varprojlim{}R/(1-x)^i R$ and $\mathbb{F}_p[[t]]$. The other claims are obvious.
\end{proof}

\begin{lemma}\label{need1}
Let $g=(x-1)^ch$ for some $g,h\in R$, where $x-1$ does not divide $h$. Then $g\mathbb{F}_p[[\mathbb{Z}_p]]\cap R=(x-1)^cR$.
\end{lemma}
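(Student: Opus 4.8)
The plan is to prove the nontrivial inclusion $g\mathbb{F}_p[[\mathbb{Z}_p]]\cap R \subseteq (x-1)^c R$, since the reverse inclusion is immediate: $g = (x-1)^c h$ lies in $(x-1)^c R$, and $(x-1)^c R$ is an ideal of $R$ contained in the ideal $(x-1)^c\mathbb{F}_p[[\mathbb{Z}_p]]$ of the completion, so any element of $g\mathbb{F}_p[[\mathbb{Z}_p]]$ lying in $R$ that we wish to capture is at worst in $(x-1)^c\mathbb{F}_p[[\mathbb{Z}_p]]\cap R$; thus it suffices to handle the case $g = (x-1)^c$ together with the observation that $h$, not divisible by $x-1$, is a unit in $\mathbb{F}_p[[\mathbb{Z}_p]]$. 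Indeed, by Corollary~\ref{need} we identify $\mathbb{F}_p[[\mathbb{Z}_p]]$ with $\mathbb{F}_p[[t]]$ via $x-1 \mapsto t$, and under this identification $h$ maps to a power series with nonzero constant term (because $x-1\nmid h$ means $h(1)\neq 0$), hence is invertible in $\mathbb{F}_p[[t]]$. Therefore $g\mathbb{F}_p[[\mathbb{Z}_p]] = (x-1)^c\mathbb{F}_p[[\mathbb{Z}_p]]$, and the claim reduces to showing $(x-1)^c\mathbb{F}_p[[\mathbb{Z}_p]]\cap R = (x-1)^c R$.

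For this reduced statement, take $r \in R$ with $r = (x-1)^c \sigma$ for some $\sigma \in \mathbb{F}_p[[\mathbb{Z}_p]] \cong \mathbb{F}_p[[t]]$. I want to conclude $\sigma \in R$, or more precisely that $r \in (x-1)^c R$. The idea is a valuation argument: write $r = (x-1)^d u$ in $R$ where $x-1 \nmid u$ (so $u(1)\neq 0$ and $u$ is a unit in $\mathbb{F}_p[[t]]$), and $d = v_{x-1}(r)$ is the $(x-1)$-adic valuation of $r$ in the PID $R$. Then in $\mathbb{F}_p[[t]]$ we have $(x-1)^d u = (x-1)^c \sigma$, and comparing $t$-adic valuations (using that $u$ is a unit) forces $d \geq c$. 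Hence $r = (x-1)^c \cdot (x-1)^{d-c} u \in (x-1)^c R$, which is exactly what we need.

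The one point that requires a little care — and the step I expect to be the main (minor) obstacle — is justifying that the $(x-1)$-adic valuation on $R$ computed inside $R$ agrees with the $t$-adic valuation computed after mapping into $\mathbb{F}_p[[t]]$; equivalently, that the map $R \to \mathbb{F}_p[[\mathbb{Z}_p]] = \varprojlim_j R/(x-1)^j R$ is injective and compatible with the filtrations by powers of $(x-1)$. Injectivity holds because $\bigcap_j (x-1)^j R = 0$ in $R$ (a nonzero Laurent polynomial has finite $(x-1)$-valuation), and the compatibility is built into the definition of the projective limit used in Corollary~\ref{need}, where the transition maps are the natural quotients $R/(x-1)^{j+1}R \to R/(x-1)^j R$. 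Once this identification of valuations is in hand, the argument above goes through verbatim, and we may again remove the reduction: for general $g=(x-1)^c h$, multiplying an arbitrary element of $g\mathbb{F}_p[[\mathbb{Z}_p]]\cap R$ by the unit $h^{-1}$ does not change membership in $(x-1)^c R$ (as $h\in R$ and $h$ is a unit in the completion, $h^{-1}(x-1)^c R$ makes sense only after we note $g\mathbb{F}_p[[\mathbb{Z}_p]]=(x-1)^c\mathbb{F}_p[[\mathbb{Z}_p]]$), completing the proof.
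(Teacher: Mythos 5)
Your argument is correct and is essentially the paper's proof: both identify $\mathbb{F}_p[[\mathbb{Z}_p]]$ with $\mathbb{F}_p[[t]]$ via Corollary~\ref{need}, use that $h$ (having $h(1)\neq 0$) is a unit there so that $g\mathbb{F}_p[[\mathbb{Z}_p]]=(x-1)^c\mathbb{F}_p[[\mathbb{Z}_p]]$, and then verify $(x-1)^c\mathbb{F}_p[[t]]\cap R=(x-1)^cR$. The only cosmetic difference lies in that last step: you compare $(x-1)$-adic (i.e.\ $t$-adic) valuations, while the paper first reduces to $\mathbb{F}_p[x]$ by multiplying by a power of $x$ and concludes by inspecting which power series have finitely many nonzero coefficients.
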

\begin{proof}
Note that multiplying by some positive power of $x$ we may assume that $g,h\in\mathbb{F}_p[x]$. Hence it suffices to prove that if $g\in \mathbb{F}_p[x]$, we have that $g\mathbb{F}_p[[\mathbb{Z}_p]]\cap \mathbb{F}_p[x]=(x-1)^c\mathbb{F}_p[x]$. Change variable $t=x-1$ and note that $\mathbb{F}_p[x]=\mathbb{F}_p[t]$.  Now, using first part of Corollary~\ref{need} we have to prove that if $g\in\mathbb{F}_p[t]$, $g=t^ch$ and $h(0)\neq 0$ then $g\mathbb{F}_p[[t]]\cap t^c\mathbb{F}_p[t]$. Notice that since $h(0)\neq 0$, $h$ is invertible in $\mathbb{F}_p[[t]]$ and therefore $g\mathbb{F}_p[[t]]=t^c\mathbb{F}_p[[t]]$. It is left to recall that the element  $\sum_{j\geq 0} \ep_j t^j\in\mathbb{F}_p[[t]]$ belongs to $\mathbb{F}_p[t]$ if nad only if the number of nonzero $\ep_j$ is finite.
\end{proof}

\begin{lemma}\label{need2}
Let $U\subset R^n$ be a subgroup such that $e(U)=p^e$ for some $e\geq 0$. Let $k=np^e$. Then there are elements $h_1,\dots, h_k\in R^n$ and $d_1,\dots,d_k\in \mathbb{F}_p[x]$ such that $d_j=0$ or $d_j(0)=1$,  $R^n=\oplus_{j=1}^{k}R^{p^e}h_j$ and $U=\oplus_{j=1}^{k}(Rd_j)^{p^e}h_j$. Moreover, $U$ is equal to the intersection of its closure in $\mathbb{F}_p[[\mathbb{Z}_p]]^n$ with $R^n$ if and only if ${\det}^*U=\prod_{j:d_j\neq 0} d_j$ is a power of $1-x$ (recall the definitions \ref{ddet} and \ref{expp}).
\end{lemma}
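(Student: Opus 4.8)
The plan is to reduce both assertions to the Smith normal form over the principal ideal domain $R$ (Lemma~\ref{decomp}) together with Lemma~\ref{need1}, applied to the subring $R^{p^e}=\{r^{p^e}:r\in R\}=\mathbb{F}_p[x^{p^e},x^{-p^e}]$, which is isomorphic to $R$ via $x\mapsto x^{p^e}$. The hypothesis $e(U)=p^e$ means (Definition~\ref{expp}) that $x^{p^e}U=U$, so $U$ is a submodule of $R^n$ for the action of $R^{p^e}$. For this action $R^n$ is free of rank $k=np^e$: every $v\in R^n$ is uniquely $\sum_{r=0}^{p^e-1}\sum_{l=1}^n g_{rl}(x^{p^e})x^re_l$ with $g_{rl}\in R$, which is a restatement of the isomorphism $\mathcal{A}_n^{p^e}\cong R^k$ of Lemma~\ref{mod_structure}. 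By Lemma~\ref{decomp}, in its stacked-basis form for submodules of a free module over a PID, there are an $R^{p^e}$-basis $h_1,\dots,h_k$ of $R^n$ and elements $d_1,\dots,d_k$, each $0$ or lying in $\mathbb{F}_p[x]$ with $d_j(0)=1$ (normalise by absorbing powers of $x^{p^e}$ into the $h_j$), such that $R^n=\oplus_{j=1}^k R^{p^e}h_j$ and $U=\oplus_{j=1}^k(Rd_j)^{p^e}h_j$. Then $R^n/U\cong\oplus_j R/d_jR$, so by unique factorisation $\prod_{j:d_j\neq 0}d_j$ is the product of the elementary divisors of $R^n/U$; hence ${\det}^*U=\prod_{j:d_j\neq 0}d_j$ by Definitions~\ref{ddet} and~\ref{dett}.

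Next I pass to the completion. Identify $\mathbb{F}_p[[\mathbb{Z}_p]]$ with $\mathbb{F}_p[[t]]$, $t=x-1$, as in Corollary~\ref{need}; then $x^{p^e}=(1+t)^{p^e}=1+t^{p^e}$, so $R^{p^e}$ is dense in $\mathbb{F}_p[[t^{p^e}]]\subset\mathbb{F}_p[[t]]$, and $\{1,t,\dots,t^{p^e-1}\}$ --- equivalently $\{1,x,\dots,x^{p^e-1}\}$ --- is at once a basis of $R$ over $R^{p^e}$ and of $\mathbb{F}_p[[t]]$ over $\mathbb{F}_p[[t^{p^e}]]$. Hence $h_1,\dots,h_k$ is also an $\mathbb{F}_p[[t^{p^e}]]$-basis of $\mathbb{F}_p[[\mathbb{Z}_p]]^n$ --- the change-of-basis matrix to the standard basis lies in $GL_k(R^{p^e})\subset GL_k(\mathbb{F}_p[[t^{p^e}]])$ --- so $\mathbb{F}_p[[\mathbb{Z}_p]]^n=\oplus_{j=1}^k\mathbb{F}_p[[t^{p^e}]]h_j$. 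Since the closure of a finite direct sum is the direct sum of the closures,
\[
\overline{U}=\bigoplus_{j=1}^k\overline{(Rd_j)^{p^e}}\,h_j ,\qquad \overline{U}\cap R^n=\bigoplus_{j=1}^k\bigl(\overline{(Rd_j)^{p^e}}\cap R^{p^e}\bigr)h_j ,
\]
where each closure is taken in $\mathbb{F}_p[[t^{p^e}]]$.

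Finally I conclude. The isomorphism $R\cong R^{p^e}$, $x\mapsto x^{p^e}$, is a homeomorphism carrying the topology on $R$ whose completion is $\mathbb{F}_p[[\mathbb{Z}_p]]$ onto the subspace topology of $R^{p^e}\subset\mathbb{F}_p[[t]]$, whose completion is $\mathbb{F}_p[[t^{p^e}]]$, and it sends the prime $1-x$ to $1-x^{p^e}$. So Lemma~\ref{need1}, read inside $R^{p^e}$, gives for each $j$ with $d_j\neq 0$, writing $c_j$ for the multiplicity of $1-x$ in $d_j$, that $\overline{(Rd_j)^{p^e}}\cap R^{p^e}=(1-x^{p^e})^{c_j}R^{p^e}$, while for $d_j=0$ the $j$-th summand vanishes in both $U$ and $\overline{U}\cap R^n$. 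Comparing with $U=\oplus_j(Rd_j)^{p^e}h_j$, equality $U=\overline{U}\cap R^n$ holds if and only if $(Rd_j)^{p^e}=(1-x^{p^e})^{c_j}R^{p^e}$ for every $j$ with $d_j\neq 0$, i.e. $d_j$ and $(1-x)^{c_j}$ differ by a unit of $R$; since $d_j(0)=1=(1-x)^{c_j}(0)$ this forces $d_j=(1-x)^{c_j}$. Hence $U=\overline{U}\cap R^n$ if and only if every nonzero $d_j$ is a power of $1-x$, which is precisely the condition that ${\det}^*U=\prod_{j:d_j\neq 0}d_j$ be a power of $1-x$. The only genuine obstacle here is the bookkeeping: keeping the two $R$-actions on $R^n$ (multiplication by $x$ versus by $x^{p^e}$) apart, verifying that the $R^{p^e}$-direct-sum decomposition of $R^n$ survives completion so that closures may be computed summand by summand, and transporting Lemma~\ref{need1} correctly along $x\mapsto x^{p^e}$.
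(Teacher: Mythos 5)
Your proof is correct and follows essentially the same route as the paper: Smith normal form for the $R^{p^e}$-module structure on $R^n$, extension of the resulting direct-sum decomposition to $\mathbb{F}_p[[\mathbb{Z}_p]]^n$, and a componentwise application of Lemma~\ref{need1} transported along $x\mapsto x^{p^e}$. The only difference is cosmetic: where the paper verifies $\mathbb{F}_p[[\mathbb{Z}_p]]^n=\oplus_{j}(\mathbb{F}_p[[\mathbb{Z}_p]])^{p^e}h_j$ by an approximation argument using Lemma~\ref{need1}, you obtain it from the $GL_k(R^{p^e})$ change of basis together with the basis $\{1,t,\dots,t^{p^e-1}\}$ of $\mathbb{F}_p[[t]]$ over $\mathbb{F}_p[[t^{p^e}]]$, which is equally valid.
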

\begin{proof}
The existence of such $h_j$ and $d_j$ follows using the fact that $R^n$ seen as $R$ module with the action $x*v=x^{p^e}v$ is isomorphic to $R^{k}$ (to see this, consider a formula similar to \eqref{isomorphism}) and then applying Lemma~\ref{decomp}.

To prove the second part note first that $\mathbb{F}_p[[\mathbb{Z}_p]]^n=\oplus_{j=1}^{k}(\mathbb{F}_p[[\mathbb{Z}_p]])^{p^e}h_j$. Indeed, suppose that $u_j\in \mathbb{F}_p[[\mathbb{Z}_p]]$ and $\sum_j u_j^{p^e}h_j=0$. Let $u_j=\lim_q u_{j,q}$ so that $u_{j,q}\in R$ and $u_j-u_{j,q}\in (x-1)^q \mathbb{F}_p[[\mathbb{Z}_p]]$. It follows that $\sum_j u_{j,q}^{p^e}h_j\in (x-1)^q \mathbb{F}_p[[\mathbb{Z}_p]]\cap R^n=(x-1)^q R^n$ by Lemma~\ref{need1}. Hence $\sum_j u_{j,q}^{p^e}h_j=(x-1)^q\sum_j v_{j,q}^{p^e}h_j$ for some $v_{j,q}\in R$. Thus for $q>p^e$ we obtain $\sum_j (u_{j,q}-(x-1)^{q-p^e}v_{j,q})^{p^e}h_j=0$. It follows that $u_{j,q}=(x-1)^{q-p^e}v_{j,q}$ and therefore $u_j=\lim_q u_{j,q}=0$.

It follows that if $\bar{U}$ is the closure of $U$ in $\mathbb{F}_p[[\mathbb{Z}_p]]^n$, then \[\bar{U}=\oplus_{j=1}^{k}(\mathbb{F}_p[[\mathbb{Z}_p]]d_j)^{p^e}h_j.\]
Let $d_j=(x-1)^{r_j}h_j$ for some $r_j\geq 0$ and $h_j$ such that $x-1$ does not divide $h_j$. Then by Lemma~\ref{need1} we obtain that 
\begin{equation*}
\begin{aligned}
&\bar{U}\cap R^n=\oplus_{j=1}^{k}(\mathbb{F}_p[[\mathbb{Z}_p]]d_j)^{p^e}h_j\cap R^n=\\&\oplus_{j=1}^{k}(\mathbb{F}_p[[\mathbb{Z}_p]](x-1)^{r_j})^{p^e}h_j\cap R^n=\oplus_{j=1}^{k}(R(x-1)^{r_j})^{p^e}h_j,
\end{aligned}
\end{equation*}
hence $U=\bar{U}\cap R^n$ if and only if $h_j=1$.
\end{proof}

\begin{theorem}\label{closeddp}
Let $V$ be a subgroup of $\mathcal{L}_n$ with a triple $(s,V_0,v)$. Then $V$ is closed in the pro-$p$ topology if and only if either $s=0$ and $V_0$ is closed as a subset of $\mathbb{F}_p[[\mathbb{Z}_p]]^n$, or  $s>0$ and for $V_0$ we have that $e(V_0)$ divides $p^{|s|_p}$, ${\det}^*V_0$ is a power of $1-x$, and for any $0<q<s$ such that $|q|_p\geq |s|_p$ 
\[
(x^q-1)v\not\in (x^s-1)\mathcal{A}_n+(x-1)^{p^{|s|_p}}V_0.
\]
\end{theorem}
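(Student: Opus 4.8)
The plan is to split into the two cases $s=0$ and $s>0$. The case $s=0$ is immediate: then $V=V_0\subset\mathcal{A}_n$, and since the pro-$p$ topology on $\mathcal{L}_n$ restricts to the subspace topology coming from $\mathbb{F}_p[[\mathbb{Z}_p]]^n$ (by Lemma~\ref{3_19}, $\mathcal{A}_n$ is a closed subgroup of the pro-$p$ completion and the induced topology is the one given by the ideals $(x-1)^{p^m}\mathcal{A}_n$), closedness of $V$ in $\mathcal{L}_n$ is the same as $V_0=\bar{V_0}\cap R^n$ where $\bar{V_0}$ is the closure in $\mathbb{F}_p[[\mathbb{Z}_p]]^n$. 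This mirrors the proof of Theorem~\ref{closedd} verbatim, with $\hat{\mathbb{Z}}$ replaced by $\mathbb{Z}_p$ and the compactness argument applied to $\mathbb{F}_p[[\mathbb{Z}_p]]^n$.

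For $s>0$, the strategy is to first reduce, via Lemma~\ref{need0}, to purely module-theoretic statements about $V_0$. Lemma~\ref{need0} already tells us that, when $s>0$, $V$ is pro-$p$ closed if and only if (i) $V_0=\bar{V_0}\cap R^n$ and (ii) $(x^q-1)/(x^s-1)\,v\notin\mathcal{A}_n+\bar{V_0}$ for all $0<q<s$ with $|q|_p\ge|s|_p$. So the work is to translate conditions (i) and (ii) into the stated form. For (i): since $V$ is pro-$p$ closed, $s$ must be a power of $p$ — indeed the argument in Theorem~\ref{3_9} shows that along any subnormal $p$-chain intersecting to $V$ the projections $s_i$ stabilize at a power of $p$, forcing $e(V_0)\mid s_i = p^{|s|_p}$; conversely one checks $e(V_0)\mid p^{|s|_p}$ is exactly what is needed so that $V_0$ is an $R$-submodule of $\mathcal{A}_n$ under the action $x*v=x^{p^{|s|_p}}v$, which is the hypothesis of Lemma~\ref{need2}. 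Then Lemma~\ref{need2} identifies condition (i) with ${\det}^*V_0$ being a power of $1-x$. For (ii): clearing denominators, $(x^q-1)/(x^s-1)\,v = \varphi$ for a polynomial in $v$ precisely when $(x^q-1)v\in(x^s-1)\cdot(\text{something})$; more carefully, $\frac{x^q-1}{x^s-1}v\in\mathcal{A}_n+\bar{V_0}$ is equivalent, after multiplying through by $x^s-1$ and using $\bar{V_0}=\oplus_j(\mathbb{F}_p[[\mathbb{Z}_p]](x-1)^{r_j})^{p^e}h_j$ together with Lemma~\ref{need1}, to $(x^q-1)v\in(x^s-1)\mathcal{A}_n+(x-1)^{p^{|s|_p}}V_0$. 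This is the content to verify.

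The key steps, in order, are: (1) dispose of $s=0$ by the compactness/closure argument; (2) for $s>0$, invoke Lemma~\ref{need0} to reduce to conditions (i) and (ii); (3) show $V$ pro-$p$ closed forces $s=p^{|s|_p}$ and $e(V_0)\mid p^{|s|_p}$, so Lemma~\ref{need2} applies and condition (i) becomes "${\det}^*V_0$ is a power of $1-x$"; (4) rewrite condition (ii) by clearing the denominator $x^s-1$ and applying Lemma~\ref{need1} to the explicit decomposition of $\bar{V_0}$ from Lemma~\ref{need2}, turning it into the stated non-membership $(x^q-1)v\notin(x^s-1)\mathcal{A}_n+(x-1)^{p^{|s|_p}}V_0$; (5) run the converse, assembling a subnormal $p$-chain (or directly checking closedness via Lemma~\ref{need0}) from the three stated conditions.

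The main obstacle I expect is step (4): carefully justifying that passing between the "fractional" condition $\frac{x^q-1}{x^s-1}v\in\mathcal{A}_n+\bar{V_0}$ in the field of fractions of $\mathbb{F}_p[[\mathbb{Z}_p]]$ and the "integral" condition $(x^q-1)v\in(x^s-1)\mathcal{A}_n+(x-1)^{p^{|s|_p}}V_0$ is an equivalence and not merely an implication. This requires knowing that $x^s-1 = (x-1)^s\cdot(\text{unit near }1)$ has $p$-adic valuation exactly $|s|_p$ as a power of $x-1$, controlling how multiplication by $x^s-1$ interacts with the summands $(\mathbb{F}_p[[\mathbb{Z}_p]](x-1)^{r_j})^{p^e}h_j$, and using Lemma~\ref{need1} to intersect back down to $R^n$ — the bookkeeping of the exponents $r_j$, $p^e=p^{|s|_p}$, and the valuation of $x^q-1$ versus $x^s-1$ is where the argument is most delicate.
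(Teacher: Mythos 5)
Your overall route coincides with the paper's: dispose of $s=0$, reduce the case $s>0$ to the two conditions of Lemma~\ref{need0}, use Lemma~\ref{need2} to translate closedness of $V_0$ into ``${\det}^*V_0$ is a power of $1-x$'', and clear the denominator $x^s-1$ via the decomposition of $\bar{V_0}$ together with Lemma~\ref{need1} to turn the fractional condition into the stated integral one (the paper does exactly this, writing $x^s-1=((x-1)h')^{p^{|s|_p}}$ with $h'$ invertible in $\mathbb{F}_p[[\mathbb{Z}_p]]$ and computing $((x^s-1)\bar{V_0})\cap R^n=(x-1)^{p^{|s|_p}}V_0$).

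However, step (3) as you state it contains a genuine error: pro-$p$ closedness of $V$ does \emph{not} force $s=p^{|s|_p}$, and the theorem itself makes no such claim. If $V=\cap_i H_i$ for a subnormal $p$-chain with triples $(s_i,V_i,v_i)$, then the $s_i$ are powers of $p$ dividing $s$ and eventually stabilize at some $p^{e'}$ with $e'\leq |s|_p$; but this stabilized value need not equal $s$ (nor $p^{|s|_p}$), since the projection of an intersection can be strictly larger than the limit of the projections. Concretely, for odd $p$ the cyclic subgroup of $\mathcal{L}_1$ generated by $(e,2)$ has triple $(2,\{0\},e)$ and satisfies all three conditions of the theorem ($e(V_0)=1$, ${\det}^*V_0=1$, and $(x-1)e\notin(x^2-1)\mathcal{A}_1$ since $x+1$ is not invertible in $R$), so it is pro-$p$ closed although $s=2$ is not a power of $p$; any chain intersecting to it has $s_i=1$ for all $i$. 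The correct (and sufficient) deduction is only $e(V_0)\mid s_i\mid p^{|s|_p}$, which is what the paper extracts from the chain via Proposition~\ref{pro2}, and which is all that Lemma~\ref{need2} requires. The same slip would infect your step (5): the converse must be verified for arbitrary $s>0$ satisfying the three conditions, not just for $s$ a power of $p$; the paper does this not by assembling a chain but by checking directly the two conditions of Lemma~\ref{need0} (closedness of $V_0$ from Lemma~\ref{need2}, plus the denominator-clearing equivalence). With the false claim removed and replaced by the divisibility statement, your argument matches the paper's proof.
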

\begin{proof}
The case $s=0$ is obvious, so assume $s>0$.
If $V$ is pro-$p$ closed then it follows that $V$ is the intersection of the $p$-chain $V_j$ with triples $(s_j,V_j,v_j)$. Since it is $p$-chain, each $s_j$ is a power of $p$, and since $s_j|s$, the sequence $s_j$ stabilize. Thus eventually $s_j=p^{e'}$ for some $e'\leq e$. Then eventually $x^{p^{e}}V_j=V_j$, and since $V_0=\cap V_j$, we obtain that also $x^{p^e}V_0=V_0$. Then since $V$ is pro-$p$ closed, $V_0$ is closed in $\mathbb{F}_p[[\mathbb{Z}_p]]^n$ and therefore by Lemma~\ref{need2} ${\det}^*V_0$ is a power of $1-x$.

Conversely, suppose that $x^{p^e}V_0=V_0$  and ${\det}^*V_0$ is a power of $1-x$. By Lemma~\ref{need2} $V_0$ is closed in $\mathbb{F}_p[[\mathbb{Z}_p]]^n$. 

Notice also that 
\[
\frac{x^q-1}{x^s-1}v\not\in R^n+\bar{V_0}
\]
if and only if $(x^q-1)v\not\in (x^s-1)R^n+((x^s-1)\bar{V_0})\cap R^n$. Note that we can write $x^s-1=((x-1)h')^{p^e}$ where $h'$ is not divisible by $x-1$, and therefore invertible in $\mathbb{F}_p[[\mathbb{Z}_p]]$. Using the decomposition from the Lemma~\ref{need2} we have that $\bar{V_0}=\oplus_{j=1}^{k}(\mathbb{F}_p[[\mathbb{Z}_p]]d_j)^{p^e}h_j$, hence 
\[(x^s-1)\bar{V_0}\cap R^n=\oplus_{j=1}^{k}(\mathbb{F}_p[[\mathbb{Z}_p]](x-1)d_j)^{p^e}h_j\cap R^n=\oplus_{j=1}^{k}(R(x-1)d_j)^{p^e}h_j.\] 
It follows that 
\begin{equation*}
\begin{aligned}
& ((x^s-1)\bar{V_0})\cap R^n=\oplus_{j=1}^{k}(R(x-1)d_j)^{p^e}h_j=(x-1)^{p^e}V_0.
\end{aligned}
\end{equation*}
\end{proof}


\subsection{Automaton that generates $\mathcal{L}_{1,p}$.}
In this subsection we give description of a part of lattice of subgroups in $\mathcal{L}_{1,p}$ in terms of action on a $p$-regular rooted tree. We also provide automaton presentation of $\mathcal{L}_{1,p}$ showing, in particular, that groups $\mathcal{L}_{1,p}$ are self-similar.

Consider in the group $\mathcal{L}_1=\mathcal{L}_{1,p}$ the $p$-chain $H_m=(1-x)^mR\rtimes\bz$, for $m\geq 0$, that is $H_m$ has the triple $(1,(1-x)^mR,0)$. Since this is $p$-chain the index of $H_m$ is $p^m$. By the Lemma~\ref{normal_group} each $H_m$ is normal, in particular the chain is subnormal. Identify the $p$-regular rooted tree $T_p$ with the set of words over the alphabet $\Omega=\{0,1,\dots,p-1\}$. For each $\omega\in\Omega^*$, $\omega=\omega_0\cdots \omega_{m-1}$ construct the element $\bar{\omega}=\sum_{i=0}^{m-1}\omega_i(x-1)^i\in R$. The following Lemma holds:
\begin{lemma}
$\omega\mapsto (\bar{\omega},0)H_{|\omega|}$ is the isomorphism of $T_p$ and the coset tree of the chain $\{H_m\}$, where $|\omega|$  is the length of the word $\omega$.
\end{lemma}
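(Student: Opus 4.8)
The plan is to describe the level-$m$ vertices of the coset tree explicitly as the elements of $R/(1-x)^mR$, to match these with the length-$m$ words via $\omega\mapsto\bar{\omega}$, and then to check that the parent relation is preserved. First I would pin down the cosets. By Lemma~\ref{3_1} the triple of $H_m$ is $(1,(1-x)^mR,0)$, so $(0,1)\in H_m$ and hence $(0,s)=(0,1)^s\in H_m$ for every $s\in\mathbb{Z}$. Using the multiplication rule $gh=(v+x^sw,s+t)$ we have $(v,s)=(v,0)(0,s)$, so every right coset of $H_m$ in $\mathcal{L}_1$ has the form $(v,0)H_m$ with $v\in\mathcal{A}_1=R$, and $(v,0)H_m=(v',0)H_m$ exactly when $(v-v',0)\in H_m$, i.e.\ when $v-v'\in(1-x)^mR$. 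Thus the level-$m$ vertices of the coset tree are canonically identified with $R/(1-x)^mR$, the root (level $0$) being $H_0=\mathcal{L}_1$.

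Next I would show that $\omega\mapsto\bar{\omega}$ induces a bijection from $\Omega^m$ onto $R/(1-x)^mR$. Since $(1-x)^m\in\mathbb{F}_p[x]$ takes the value $1$ at $0$, the quotient $R/(1-x)^mR$ is isomorphic to $\mathbb{F}_p[x]/(1-x)^m\mathbb{F}_p[x]$, an $m$-dimensional vector space over $\mathbb{F}_p$; hence it has $p^m$ elements, which is exactly $|\Omega^m|$, and it is enough to prove injectivity. If $\omega\neq\omega'$ are words of length $m$ and $j$ is the least index with $\omega_j\neq\omega'_j$, then $\bar{\omega}-\bar{\omega'}=(x-1)^jg$ with $g=\sum_{i=j}^{m-1}(\omega_i-\omega'_i)(x-1)^{i-j}$, and $g(1)=\omega_j-\omega'_j\neq 0$ in $\mathbb{F}_p$. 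If $\bar{\omega}-\bar{\omega'}$ lay in $(1-x)^mR=(x-1)^mR$, then cancelling the nonzero factor $(x-1)^j$ in the integral domain $R$ (legitimate because $j<m$) would give $g\in(x-1)^{m-j}R$ and hence $g(1)=0$, a contradiction. So the two cosets are distinct, and $\omega\mapsto(\bar{\omega},0)H_{|\omega|}$ is a level-preserving bijection from $\Omega^*$ onto the vertex set of the coset tree.

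Finally I would match the edges. For a word $\omega$ of length $m\geq 1$, let $\omega'$ be $\omega$ with its last letter removed, so that $\bar{\omega}=\bar{\omega'}+\omega_{m-1}(x-1)^{m-1}$. The unique neighbour of $(\bar{\omega},0)H_m$ at level $m-1$ is the coset $gH_{m-1}$ containing it, which is $(\bar{\omega},0)H_{m-1}$ since $H_m\subset H_{m-1}$; and because $\bar{\omega}-\bar{\omega'}=\omega_{m-1}(x-1)^{m-1}\in(1-x)^{m-1}R$, this coset equals $(\bar{\omega'},0)H_{m-1}$, which is the image of $\omega'$. Hence the bijection commutes with passing to the parent, so it is an isomorphism of rooted trees, which proves the lemma. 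I do not expect a genuine obstacle here: the only points needing care are the bookkeeping with right cosets in $\mathcal{L}_1$ and the unique-representative statement for $R/(1-x)^mR$ (equivalently, that $1,x-1,\dots,(x-1)^{m-1}$ is an $\mathbb{F}_p$-basis of that quotient, being unitriangular over the monomial basis $1,x,\dots,x^{m-1}$), both of which are routine.
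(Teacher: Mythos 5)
Your proof is correct and follows essentially the same route as the paper: identify the level-$m$ cosets with $R/(1-x)^mR$ via representatives $(v,0)$, use that $1,x-1,\dots,(x-1)^{m-1}$ give unique representatives (the paper's injectivity/surjectivity argument), and check adjacency. You merely spell out the adjacency check and the uniqueness of representatives, which the paper dismisses as easy, so there is nothing to correct.
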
 
\begin{proof}
For injectivity, consider that $(\bar{\omega},0)H_{|\omega|}=(\bar{\theta},0)H_{|\theta|}$ if and only if $|\theta|=|\omega|=m$ and $\bar{\theta}-\bar{\omega}\in (1-x)^mR$, that is \Iff $\bar{\omega}=\bar{\theta}$, which means that $\omega_i=\theta_i$ and hence $\omega=\theta$. For surjectivity, consider that for any $(w,s)\in\mathcal{L}_1$ we have that $(w,s)H_m=(w,0)H_m$. Note also that $R/(1-x)^mR=\mathbb{F}_p[x]/\langle(1-x)^m\rangle$, so for any $w\in R$ there is a sequence $w_i\in\Omega$, such that 
\[
w=\sum_{i=0}^{m-1}w_i(x-1)^i\mod (1-x)^mR,
\]
for any $m\geq 0$. 
Finally, it is easy to establish that the map $\omega\mapsto\bar{\omega}$ preserve the property of vertices being adjoint.
\end{proof}
In the following Proposition we present the description of the stabilizers of the action of $\mathcal{L}_1$ on the coset tree of $\{H_m\}$, identified with $T_p$ with the help of the map from the above lemma:
\begin{proposition}\label{onne}
The subgroup $\Stab_{\mathcal{L}_1}(\omega)$ has the triple $(1,(1-x)^{|\omega|}R,(1-x)\bar{\omega})$, and the stabilizer of the $m$-th level $\Stab_{\mathcal{L}_1}(m)$ has the triple $(p^e,(1-x)^mR,0)$, where $e$ is smallest such that $m\leq p^e$.
\end{proposition}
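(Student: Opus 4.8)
The plan is to realize each of these stabilizers as an explicit conjugate, or an intersection of conjugates, of $H_m$, and then to read off the associated triple using Lemma~\ref{3_1}. Under the identification of the coset tree with $T_p$ from the preceding lemma, the vertex $\omega$ of length $m$ is the coset $(\bar\omega,0)H_m$, so $\Stab_{\mathcal{L}_1}(\omega)=(\bar\omega,0)H_m(\bar\omega,0)^{-1}$. First I would compute this conjugate directly from the multiplication rule $(v,s)(w,t)=(v+x^sw,s+t)$. Since $H_m=\{(w,t):w\in(1-x)^mR,\ t\in\mathbb{Z}\}$ (its triple being $(1,(1-x)^mR,0)$), for $g=(\bar\omega,0)$ one obtains
\[
\Stab_{\mathcal{L}_1}(\omega)=g H_m g^{-1}=\bigl\{\,\bigl(w+(1-x^t)\bar\omega,\ t\bigr):w\in(1-x)^mR,\ t\in\mathbb{Z}\,\bigr\}.
\]
The triple is now read off: the projection onto $\mathbb{Z}$ is all of $\mathbb{Z}$, so $s=1$; the intersection with $\mathcal{A}_1$ (the terms with $t=0$) is $(1-x)^mR$; and a vector realizing the projection value $1$ is obtained at $t=1$, namely $(1-x)\bar\omega$. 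By Lemma~\ref{3_1} this is precisely the triple $(1,(1-x)^mR,(1-x)\bar\omega)$.

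For the level stabilizer I would first note that $\{(\bar\omega,0):|\omega|=m\}$ is a transversal of $H_m$ in $\mathcal{L}_1$: indeed $gH_m$ depends only on the class modulo $(1-x)^mR$ of the $\mathcal{A}_1$-component of $g$, since $x^s(1-x)^mR=(1-x)^mR$ and the $\mathbb{Z}$-component of $g$ is absorbed by $(0,1)\in H_m$. Hence $\Stab_{\mathcal{L}_1}(m)=\bigcap_{|\omega|=m}\Stab_{\mathcal{L}_1}(\omega)$ is the core of $H_m$, a normal subgroup, and its intersection with $\mathcal{A}_1$ is $\bigcap_\omega(1-x)^mR=(1-x)^mR$. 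Using the explicit description of $\Stab_{\mathcal{L}_1}(\omega)$ found above, a pair $(v,s)$ lies in $\Stab_{\mathcal{L}_1}(m)$ precisely when $v-(1-x^s)\bar\omega\in(1-x)^mR$ for every $\omega$ of length $m$. Subtracting the conditions for two different $\omega$ and using that the differences $\bar\omega-\bar{\omega}'$ run over all of $R/(1-x)^mR$, one sees that such a $v$ exists for a given $s$ if and only if $(1-x^s)\in(1-x)^mR$, that is, $(1-x)^m\mid(1-x^s)$.

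It then remains to carry out an elementary divisibility count. For $s\neq 0$, write $s=b\,|s|_p$ with $p\nmid b$; since $1-x^{p^k}=(1-x)^{p^k}$ holds in characteristic $p$ for every $k$, and $x-1$ divides $1-x^b$ exactly once ($x^b-1$ being separable over $\mathbb{F}_p$), the element $1-x^s$ is divisible by exactly $(1-x)^{|s|_p}$ (for $s<0$ one extracts the unit $-x^{s}$ first). Thus $(1-x)^m\mid(1-x^s)$ if and only if $|s|_p\ge m$, i.e. if and only if $p^e\mid s$, where $p^e$ is the smallest power of $p$ with $m\le p^e$; together with the value $s=0$ this shows the projection of $\Stab_{\mathcal{L}_1}(m)$ onto $\mathbb{Z}$ is $p^e\mathbb{Z}$. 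Finally, for $s=p^e$ we have $1-x^{p^e}=(1-x)^{p^e}\in(1-x)^mR$, so the condition on $v$ becomes $v\in(1-x)^mR$ and we may take $v=0$; hence $\Stab_{\mathcal{L}_1}(m)$ has triple $(p^e,(1-x)^mR,0)$. I do not expect a genuine obstacle here: the only substantive ingredients are that $1-x^s$ is divisible by exactly $(1-x)^{|s|_p}$ and that the family of membership conditions collapses to one divisibility, the remainder being bookkeeping; the one thing to watch is the sign in the conjugation, so that $(1-x)\bar\omega$ and not its negative appears.
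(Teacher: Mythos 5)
Your proof is correct and follows essentially the same route as the paper: the vertex stabilizer is obtained as the conjugate $(\bar\omega,0)H_{|\omega|}(\bar\omega,0)^{-1}$, and the level stabilizer is reduced to the divisibility $(1-x)^m\mid 1-x^s$, which holds exactly when $p^e\mid s$. The only cosmetic differences are that you verify the intersection of the vertex stabilizers by hand rather than invoking Corollary~\ref{3_2}, and you settle the divisibility by noting that $(1-x)$ divides $1-x^s$ exactly $|s|_p$ times (via $1-x^{p^k}=(1-x)^{p^k}$ and separability of $x^b-1$) instead of the paper's change of variable $y=x-1$.
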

\begin{proof}
Note that in the action on the coset tree the stabilizer of the vertex $gH_m$ is $gH_mg^{-1}$. Hence $\Stab_{\mathcal{L}_1}(\omega)=(\bar{\omega},0)H_{|\omega|}(-\bar{\omega},0)$, from which the formula for the triple follows. 

To find $\Stab_{\mathcal{L}_1}(m)=\cap_{\omega:|\omega|=m}\Stab_{\mathcal{L}_1}(\omega)$ use Corollary~\ref{3_2}. It follows that $\Stab_{\mathcal{L}_1}(m)$ has the triple $(r,(1-x)^mR,v)$ where $r>0$ is minimal such that $(1-x^r)\bar{\omega}=(1-x^r)\bar{\omega'}\mod (1-x)^mR$ for any $\omega,\omega'$ of length $m$, and $v=(1-x^r)\bar{\omega}$. Choosing $\omega=0^m$ we notice that we can take $v=0$ and $r>0$ is minimal such that $(1-x^r)\bar{\omega'}=0\mod (1-x)^mR$ for all $\omega'$ of length $m$. Equivalently, $r>0$ is minimal such that $(1-x)^m$ divides $1-x^r$, that is $x^r=1+(x-1)^mf(x)$ for some $f\in\mathbb{F}_p[x]$. Let $r=p^er'$ and change variable $x$ to $y=x-1$, then we have that $\dots+r'y^{p^e}+1=(y^{p^e}+1)^{r'}=(y+1)^r=1+y^mf(y+1)$, and so since $r'\neq 0$ in $\mathbb{F}_p$ one must have that $p^e\geq m$, hence the minimal such $r=p^e$ with $e\geq\log_p(m)$. 
\end{proof}
Now we will describe the stabilizers of infinite paths. To do this, we need a little preparation. By the proof of the Corollary~\ref{need} $\varprojlim_j R/(1-x)^mR=\mathbb{F}_p[[t]]$, and the subalgebra of $\mathbb{F}_p[[t]]$ generated by $t$ and $(t-1)^{-1}=-\sum_{i\geq 0}t^i$ is isomorphic to $R$ by the isomorphism that maps $t-1$ to $x$, so we may consider $R$ as a subalgebra of $\mathbb{F}_p[[t]]$. Then any $\omega\in\Omega^{\mathbb{N}}$ defines an element $\bar{\omega}\in\mathbb{F}_p[[t]]$ by the rule $\bar{\omega}=\sum_{i\geq 0}\omega_i t^i$. Let $\mathbb{F}_p((t))$ be the ring of fractions of $\mathbb{F}_p[[t]]$, then we can consider $(1-x^s)^{-1}R$ as a subset of it.
\begin{proposition}
For $\omega\in\Omega^{\mathbb{N}}$ the $\Stab_{\mathcal{L}_1}(\omega)$ is the cyclic group generated by $(w,s)$ in case $s>0$ is the smallest integer such that $\bar{\omega}\in (1-x^s)^{-1}R$, and $w=(1-x^s)\bar{\omega}$. In case there is no such $s$ the $Stab_{\mathcal{L}_1}(\omega)$ is trivial.
\end{proposition}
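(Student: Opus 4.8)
The plan is to realise $\Stab_{\mathcal{L}_1}(\omega)$ as the descending intersection $\bigcap_{m\geq 0}\Stab_{\mathcal{L}_1}(\omega|_m)$, where $\omega|_m$ is the length-$m$ prefix of $\omega$, and then to pass to the limit inside the completion $\mathbb{F}_p[[t]]=\varprojlim_m R/(1-x)^m R$. By Proposition~\ref{onne} the vertex stabiliser $\Stab_{\mathcal{L}_1}(\omega|_m)$ has triple $(1,(1-x)^m R,(1-x)\overline{\omega|_m})$; I would first unwind this into the membership criterion that $(u,c)\in\Stab_{\mathcal{L}_1}(\omega|_m)$ if and only if $u\equiv(1-x^c)\,\overline{\omega|_m}\pmod{(1-x)^m R}$ (equivalently, compute $\Stab_{\mathcal{L}_1}(\omega|_m)=(\overline{\omega|_m},0)H_m(\overline{\omega|_m},0)^{-1}$ directly, using that the base component of $(v,1)^c$ is $\varphi_c(x)v$ and that $\varphi_c(x)(1-x)=1-x^c$).

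Next I would set up the two limiting facts: $\overline{\omega|_m}\to\bar\omega$ in $\mathbb{F}_p[[t]]$, i.e. $\bar\omega-\overline{\omega|_m}\in(1-x)^m\mathbb{F}_p[[t]]$ for all $m$, and $\bigcap_m(1-x)^m\mathbb{F}_p[[t]]=0$. Combining these with the criterion above, an element $(u,c)\in\mathcal{L}_1$ lies in $\Stab_{\mathcal{L}_1}(\omega)$ if and only if $u-(1-x^c)\bar\omega\in(1-x)^m\mathbb{F}_p[[t]]$ for every $m$, that is, if and only if $u=(1-x^c)\bar\omega$ as elements of $\mathbb{F}_p[[t]]$. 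Since $u$ is required to lie in $R$, this is exactly the condition that $c$ belong to $S:=\{c\in\mathbb{Z}:(1-x^c)\bar\omega\in R\}=\{c\in\mathbb{Z}:\bar\omega\in(1-x^c)^{-1}R\}$ (the latter read inside $\mathbb{F}_p((t))$), in which case $u$ is uniquely determined, namely $u=(1-x^c)\bar\omega$.

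The rest is bookkeeping with the triple formalism. Taking $c=0$ forces $u=0$, so $\Stab_{\mathcal{L}_1}(\omega)\cap\mathcal{A}_1=\{0\}$ and $S$ is precisely the image of $\Stab_{\mathcal{L}_1}(\omega)$ under projection to $\mathbb{Z}$; hence $S$ is a subgroup of $\mathbb{Z}$ (one can also see directly from $1-x^{c+c'}=(1-x^c)+x^c(1-x^{c'})$ and $1-x^{-c}=-x^{-c}(1-x^c)$ that $S$ is closed under addition and negation). If $S=\{0\}$, then $\Stab_{\mathcal{L}_1}(\omega)=\{(0,0)\}$ is trivial, which is the ``no such $s$'' case. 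Otherwise $S=s\mathbb{Z}$, where $s$ is the least positive integer with $\bar\omega\in(1-x^s)^{-1}R$; then by Lemma~\ref{3_1} the triple of $\Stab_{\mathcal{L}_1}(\omega)$ is $(s,\{0\},w)$ with $w=(1-x^s)\bar\omega\in R$, so $\Stab_{\mathcal{L}_1}(\omega)=\langle(w,s)\rangle$, and since $(w,s)^k=((1-x^{ks})\bar\omega,ks)$ gives a distinct element for each $k\in\mathbb{Z}$, this cyclic group is infinite.

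The step I expect to be the main obstacle is the limiting argument, where one must be scrupulous about the ambient rings: $R$ embedded in its $(1-x)$-adic completion $\mathbb{F}_p[[t]]$, the element $\bar\omega$ living in that completion rather than in $R$, and the condition ``$\bar\omega\in(1-x^s)^{-1}R$'' being interpreted in the fraction field $\mathbb{F}_p((t))$. The equivalence of the infinite family of level-$m$ congruences with the single identity $u=(1-x^c)\bar\omega$ rests precisely on $\bigcap_m(1-x)^m\mathbb{F}_p[[t]]=0$ together with the density statement $\overline{\omega|_m}\to\bar\omega$; once that is pinned down, the remaining assertions follow from Proposition~\ref{onne} and the triple formalism of Lemma~\ref{3_1}.
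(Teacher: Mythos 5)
Your argument is correct and follows essentially the same route as the paper: write $\Stab_{\mathcal{L}_1}(\omega)$ as the intersection of the prefix stabilizers from Proposition~\ref{onne}, translate membership into the congruences $u\equiv(1-x^c)\overline{\omega^{(m)}}\pmod{(1-x)^mR}$, and use $\bigcap_m(1-x)^m=0$ to force $u=(1-x^c)\bar\omega$, so that the stabilizer is trivial or infinite cyclic generated by $((1-x^s)\bar\omega,s)$ with $s$ minimal. Your extra care about the ambient rings ($\mathbb{F}_p[[t]]$ and $\mathbb{F}_p((t))$) and the explicit identification of the projection subgroup $S=s\mathbb{Z}$ are refinements of, not departures from, the paper's proof.
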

 \begin{proof}
Denote by $\omega^{(m)}$ prefix of length $m$ of $\omega$. Note that $\cap_m(1-x)^mR$ is trivial, so  $\Stab_{\mathcal{L}_1}(\omega)=\cap_m\Stab_{\mathcal{L}_1}(\omega^{(m)})$ can be either trivial or generated by some $(w,s)$ for $s>0$. The second case happens if and only if for every $m$ we have $(w,s)\in\Stab_{\mathcal{L}_1}(\omega^{(m)})$, and $s$ is smallest that this is possible. Since by Proposition~\ref{onne} the triple of $\Stab_{\mathcal{L}_1}(\omega^{(m)})$ is $(1,(1-x)^mR,\sum_0^{m-1}\omega_i(x-1)^{i+1})$, and 
 \[
 (\sum_0^{m-1}\omega_i(x-1)^{i+1},1)^s=((1-x^s)(\sum_0^{m-1}\omega_i(x-1)^{i}),s),
 \]
  this can happen if and only if $w=(1-x^s)(\sum_0^{m-1}\omega_i(x-1)^{i})\mod (1-x)^mR$ for every $m$, or equivalently that $w-(1-x^s)\bar{\omega}\in(1-x)^mR$. Since $\cap_m(1-x)^mR$ is trivial, it means that $w=(1-x^s)\bar{\omega}$.
 \end{proof}
We finally describe the automaton that generates the action of $\mathcal{L}_{1,p}$ on the tree.
\begin{proposition}
Let $b=(e,0)$ and $a=(0,1)$ be the generators of $\mathcal{L}_1$. Then their action on vertices of the tree can be described as  
\begin{equation}\label{starr}
\begin{aligned}
& b(\omega_0\omega_1\dots\omega_{m})=(\omega_0+1)\omega_1\dots\omega_m, \\&
a(\omega_0\omega_1\dots\omega_{m})=\omega_0(\omega_1+\omega_0)\dots(\omega_{m}+\omega_{m-1}),
\end{aligned}
\end{equation}
where the addition is mod $p$.
The relations \ref{starr} lead to the following recursive formula $a=(a,ba,\dots,b^{p-1}a)$ describing the action of $a$ in terms of of its projections on subtrees with roots at first level. Denoting $a_i=b^ia$, $0\leq i\leq p-1$, we see that $\mathcal{L}_1$ is generated by the automaton with $p$ states $a_0,\dots, a_{p-1}$.
\end{proposition}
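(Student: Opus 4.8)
The plan is to read both formulas in \eqref{starr} off the explicit coset-tree model built in the preceding lemmas, and then to deduce the wreath recursion for $a$ by restricting to the subtrees hanging at level one. Throughout I would use the action of $\mathcal{L}_1$ on the coset tree of $\{H_m\}$ given by $g\colon (\bar\omega,0)H_{|\omega|}\mapsto\bigl(g\cdot(\bar\omega,0)\bigr)H_{|\omega|}$ (the action considered in Proposition~\ref{onne}), together with the identification $\omega=\omega_0\cdots\omega_{m-1}\leftrightarrow\bar\omega=\sum_{i=0}^{m-1}\omega_i(x-1)^i$ with coefficients $\omega_i\in\mathbb{F}_p$.

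First I would check the formula for $b=(e,0)$. Since $e\leftrightarrow 1=(x-1)^0$, one has $b\cdot(\bar\omega,0)=(e+\bar\omega,0)$, and $e+\bar\omega$ differs from $\bar\omega$ only by adding $1$ to the coefficient of $(x-1)^0$; modulo $(1-x)^mR$ this is $\overline{(\omega_0+1)\omega_1\cdots\omega_{m-1}}$ (and since the coefficients lie in $\mathbb{F}_p$, the entry $\omega_0+1$ is simply reduced mod $p$, with no carrying into higher positions). For $a=(0,1)$, one computes $a\cdot(\bar\omega,0)=(x\bar\omega,1)$, and since $(0,1)\in H_m$ this is the same vertex as $(x\bar\omega,0)H_m$. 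Expanding $x\bar\omega=\bar\omega+(x-1)\bar\omega=\sum_i\omega_i(x-1)^i+\sum_i\omega_i(x-1)^{i+1}$ and discarding the $(x-1)^m$ term, the residue modulo $(1-x)^mR$ is $\omega_0+\sum_{j=1}^{m-1}(\omega_j+\omega_{j-1})(x-1)^j$, which is exactly the second formula.

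Next I would extract the recursion from the formula for $a$. Since $a$ fixes $\omega_0$, it acts trivially on the first level and preserves each subtree $T^{(j)}$ rooted at the length-one vertex $j$; under the identification of $T^{(j)}$ with $T_p$ obtained by deleting the leading letter $j$, the induced self-map of $T_p$ is $\omega_1\cdots\omega_m\mapsto(\omega_1+j)(\omega_2+\omega_1)\cdots(\omega_m+\omega_{m-1})$. This is precisely the effect of applying $a$ first (which gives $\omega_1(\omega_2+\omega_1)\cdots(\omega_m+\omega_{m-1})$) and then $b^j$ (which adds $j$ to the leading letter), so the section of $a$ at the vertex $j$ is $b^ja$; hence $a=(a,ba,\dots,b^{p-1}a)$ with trivial root permutation. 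Writing $a_i=b^ia$, each $b^i$ acts trivially below level one, so the section of $a_i$ at any first-level vertex $j$ equals the section of $a$ there, namely $a_j$. Thus $\{a_0,\dots,a_{p-1}\}$ is closed under taking sections and defines a Mealy automaton with $p$ states (from state $a_i$, input $j$ produces output $i+j\bmod p$ and passes to state $a_j$), and the group it generates is $\mathcal{L}_1$ because $a=a_0$ and $b=a_1a_0^{-1}$ both lie in it.

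The steps are elementary computations; the only thing calling for care is keeping straight the two coordinate systems — the positional letters $\omega_i$ versus the $(x-1)$-adic expansion $\bar\omega$ — and taking the ``section equals wreath coordinate'' identification with respect to the same left-multiplication action used in Proposition~\ref{onne}. I do not anticipate any essential obstacle beyond this bookkeeping.
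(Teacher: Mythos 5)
Your argument is correct, and since the paper states this proposition without any proof there is nothing to compare it against except the conventions it presupposes: your use of left multiplication on the coset tree agrees with the stabilizer formula of Proposition~\ref{onne}, your $(x-1)$-adic computations yield exactly \eqref{starr}, and the section computation $a|_j=b^ja$ (together with $b|_j=1$, hence $a_i|_j=a_j$ with output letter $i+j$) gives precisely the stated automaton and wreath recursion. The only detail worth adding is that the concluding claim that ``$\mathcal{L}_1$ is generated by the automaton'' implicitly uses faithfulness of the action on the coset tree, which does hold because the core of the chain $\{H_m\}$ is trivial: $\bigcap_m H_m=\langle(0,1)\rangle$ since $\bigcap_m(1-x)^mR=0$, and conjugating $(0,s)$ with $s\neq0$ by $(w,0)$ gives $((1-x^s)w,s)\neq(0,s)$ whenever $w\neq0$, so no nontrivial element lies in all conjugates.
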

\begin{remark}
Note that for $p=2$ it is the automaton studied in \cite{MR1866850}.
\end{remark}
Here are Moore diagrams of automata generating $\mathcal{L}_p$ for $p=2$: 
\begin{center}
\begin{tikzpicture}[->,>=stealth',shorten >=1pt,auto,node distance=2cm] 
    \node[state] (a_0) {$a_0$}; 
   \node[state] (a_1) [ left=of a_0] {$a_1$}; 
    \path[->] 
    (a_0) edge [bend right] node [swap] {(1,1)} (a_1)
               edge  [loop right] node {(0,0)} ()
    (a_1) edge [bend right] node {(1,0)}(a_0)
             edge  [loop left] node {(0,1)} ();
              \end{tikzpicture}
\end{center}

and $p=3$:
\begin{center}
\begin{tikzpicture}[->,>=stealth',shorten >=1pt,auto,node distance=4cm,on grid] 
    \node[state] (a_0) {$a_0$}; 
   \node[state] (a_1) [ below right=of a_0] {$a_1$}; 
   \node[state] (a_2) [ below left=of a_0] {$a_2$}; 
    \path[->] 
    (a_2) edge [bend right] node [swap] {(2,1)} (a_1)
               edge node [swap] {(1,0)} (a_0)
               edge  [loop left] node {(0,2)} ()
    (a_1) edge node [swap] {(1,2)}(a_2)
             edge [bend right] node [swap] {(2,0)}(a_0)
             edge  [loop right] node {(0,1)} ()
    (a_0) edge [bend right] node [swap] {(2,2)}(a_2)
             edge node [swap] {(1,1)}(a_1)
             edge  [loop above] node {(0,0)} ();
\end{tikzpicture}
\end{center}

\section{Application to relative gradient rank}\label{sec4}

As was already mentioned in the introduction, Lackenby defined an interesting group theoretical notion,
the rank gradient of a group, which happens to be useful in topology, studies on orbit equivalence, on amenability
properties of  groups and other topics  \cite{MR2151608}. 
Given a group $G$ and a descending sequence $\lb H_i \rb_{i=1}^{\infty}$ of subgroups of finite index the
rank gradient of the sequence $\lb H_i \rb$ with respect to $G$ is defined as

$$RG(G,\lb H_i \rb ) = \lim_{i\to \infty} \frac{d(H_i)-1}{[G:H_i]},$$
where $d(H)$ denotes the minimal number of generators of a group $H$.

The notion of amenable groups was introduced by J. von Neumann in 1929 and plays important role in many
areas of mathematics \cite{MR1645068}. There are a number
of results due to Lackenby, Abert, Jaikin-Zapirain, Luck and Nikolov, showing that
amenability of $G$ or of a normal subgroups of $G$  implies vanishing of the rank gradient. For instance,
finitely generated infinite amenable groups have zero gradient rank  with respect to any normal chain with
trivial intersection (see Theorem 5 in \cite{AJZN}).
Obviously it is reasonable to restrict  study of the rank gradient for sequences $\lb H_i \rb$ with trivial
core (i.e. no nontrivial normal subgroups in the intersection $\cap_i H_i$).  The attention was  mostly
to the case when $\cap_i H_i = \lb 1 \rb$  but  the case $\cap_i H_i \neq \lb 1 \rb$ is also interesting. 

\begin{question}\label{q2}
\cite{AJZN} Let $G$ be a finitely generated infinite amenable group.  Is it true that $RG(G,\lb H_i \rb)=0$
for any chain with trivial intersection?\\
One can also ask whether the following stronger statement is true:
Is it true that $RG(G,\lb H_i \rb)=0$ if the chain has trivial core?
\end{question}

In case of  groups $\mathcal{L}_n$ the answer to Question~\ref{q2} is positive:
\begin{theorem}\label{rrr}
Let $H_i$, $i\geq 1$ be a descending chain of subgroups in $\mathcal{L}_n$ with trivial core, then $RG(\mathcal{L}_n,\{H_i\})=0.$ If $n=1$ the converse is also true. 
\end{theorem}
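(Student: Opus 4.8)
The plan is to express $RG(\mathcal{L}_n,\{H_i\})$ in terms of the indices $[\mathcal{A}_n:H_i\cap\mathcal{A}_n]$, using Theorem~\ref{3_3}, and then to exploit the fact (Lemma~\ref{normal_group}) that the normal subgroups of $\mathcal{L}_n$ lying inside $\mathcal{A}_n$ are exactly the $x$-invariant subgroups of $\mathcal{A}_n$. (Recall $RG$ is only defined for chains of finite index, so all $H_i$ are tacitly of finite index.)

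First I would record that $d(\mathcal{L}_m)=m+1$: the shift together with a basis of the copy of $(\mathbb{Z}/p\bz)^m$ sitting at position $0$ generates $\mathcal{L}_m$, so $d(\mathcal{L}_m)\le m+1$, while the abelianization of $\mathcal{L}_m$ is $\mathbb{Z}\times(\mathbb{Z}/p\bz)^m$ (one kills $(x-1)\mathcal{A}_m$), whose reduction mod $p$ is $(\mathbb{Z}/p\bz)^{m+1}$, giving the reverse inequality. Now let $(s_i,V_{0,i},v_i)$ be the triple of $H_i$ from Lemma~\ref{3_1}, with $V_{0,i}=H_i\cap\mathcal{A}_n$. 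Finite index of $H_i$ forces $s_i>0$, and then $H_i\cong\mathcal{L}_{ns_i}$ by Theorem~\ref{3_3}, while $[\mathcal{L}_n:H_i]=s_i[\mathcal{A}_n:V_{0,i}]$; hence
\[
\frac{d(H_i)-1}{[\mathcal{L}_n:H_i]}=\frac{ns_i}{s_i[\mathcal{A}_n:V_{0,i}]}=\frac{n}{[\mathcal{A}_n:V_{0,i}]}.
\]
Since the $V_{0,i}$ decrease, the (finite) indices $[\mathcal{A}_n:V_{0,i}]$ are non-decreasing, so $RG(\mathcal{L}_n,\{H_i\})=\lim_i n/[\mathcal{A}_n:V_{0,i}]$ exists, and the first assertion reduces to showing that a trivial core forces $[\mathcal{A}_n:V_{0,i}]\to\infty$.

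To prove this I would argue by contradiction: if $[\mathcal{A}_n:V_{0,i}]$ stayed bounded it would be eventually constant, so $V_{0,i}=V_0$ for all large $i$, with $V_0$ of finite index in $\mathcal{A}_n\cong R^n$. Since $x$ is invertible on $R^n$ and there are only finitely many subgroups of $R^n$ of a given index, $x^eV_0=V_0$ for some finite $e$ (Definition~\ref{expp}). Then $M:=\bigcap_{j=0}^{e-1}x^jV_0$ is invariant under multiplication by $x$ (because $x^eV_0=V_0$), has finite index in $\mathcal{A}_n$ (hence is nontrivial), and by Lemma~\ref{normal_group} is a normal subgroup of $\mathcal{L}_n$. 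But $M\subseteq V_0\subseteq\bigcap_iH_i$, so $\bigcap_iH_i$ contains a nontrivial normal subgroup, contradicting the triviality of the core. This gives $RG(\mathcal{L}_n,\{H_i\})=0$.

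For the converse with $n=1$: if the core $N$ of $\{H_i\}$ is nontrivial, then by Corollary~\ref{one_more_cor} the subgroup $N_0:=N\cap\mathcal{A}_1$ has finite index in $\mathcal{A}_1$, and since $N_0\subseteq\bigcap_iV_{0,i}$ we get $[\mathcal{A}_1:V_{0,i}]\le[\mathcal{A}_1:N_0]$ for every $i$; the displayed formula then yields $RG(\mathcal{L}_1,\{H_i\})\ge 1/[\mathcal{A}_1:N_0]>0$. Taking contrapositives and combining with the first part shows that, for $n=1$, $RG(\mathcal{L}_1,\{H_i\})=0$ if and only if the chain has trivial core. I expect the main work to lie in the elementary but slightly delicate generator count $d(\mathcal{L}_m)=m+1$ and in the observation that a bounded base index manufactures the normal subgroup $M$ inside $\bigcap_iH_i$; once $RG$ is reduced to the base index, the remainder is bookkeeping with the dictionary of Lemma~\ref{3_1}.
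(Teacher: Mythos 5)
Your overall strategy---rewriting $\frac{d(H_i)-1}{[\mathcal{L}_n:H_i]}$ as $n/[\mathcal{A}_n:V_{0,i}]$ via Theorem~\ref{3_3}, showing that a bounded base index manufactures an $x$-invariant finite-index subgroup of $\mathcal{A}_n$ inside $\bigcap_i H_i$ (hence a nontrivial normal subgroup by Lemma~\ref{normal_group}), and using Corollary~\ref{one_more_cor} for the converse when $n=1$---is exactly the paper's argument, and the surrounding bookkeeping (including the count $d(\mathcal{L}_m)=m+1$, which the paper uses without comment) is correct. However, one justification in the middle is wrong as stated: you claim that ``there are only finitely many subgroups of $R^n$ of a given index'' and use this, by pigeonhole on the shifts $x^jV_0$, to get $e(V_0)<\infty$ in the sense of Definition~\ref{expp}. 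That finiteness claim is false: $\mathcal{A}_n\cong R^n$ is an infinite-dimensional $\mathbb{F}_p$-vector space, so already its subgroups of index $p$ (kernels of nonzero functionals $\mathcal{A}_n\to\mathbb{F}_p$) form an uncountable family; only the number of $R$-\emph{submodules} of a given codimension is finite (Lemma~\ref{submodulegrowth}), and $V_0$ is a priori only a subgroup. Moreover the conclusion genuinely fails for general finite-index subgroups: for $n=1$ the kernel of $v\mapsto\sum_{i\ge 0}v_{2^i}$ has index $p$ but satisfies $x^eU\neq U$ for every $e\ge 1$, so $e(U)=\infty$. Thus, as written, this step of your proof does not go through.

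The repair is immediate from material you already invoked: since $H_{i_0}$ has finite index, its triple has $s_{i_0}\ge 1$, and Lemma~\ref{3_1} gives $x^{s_{i_0}}V_0=V_0$ outright, so you may simply take $e=s_{i_0}$ (or $e=e(V_0)$, which divides $s_{i_0}$). Your subgroup $M=\bigcap_{j=0}^{e-1}x^jV_0$ is then the same object as the paper's $W=\bigcap_{t}x^tV$, and the rest of your argument---$M$ is $x$-invariant, of finite index, hence a nontrivial normal subgroup of $\mathcal{L}_n$ contained in the core, contradiction; and for $n=1$ the bound $RG\ge 1/[\mathcal{A}_1:N_0]>0$ from Corollary~\ref{one_more_cor}---is correct and coincides with the paper's proof.
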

\begin{proof}
Let $(s_i,V_i,v_i)$ be the triples of $H_i$, and let $m_i=\dim_{\mathbb{F}_p}\mathcal{A}_n/V_i$. By the Theorem~\ref{3_3} $H_i$ is isomorphic to $\mathcal{L}_{ns_i}$, thus $d(H_i)=ns_i+1$. On the other hand $[\mathcal{L}_n:H_i]=s_ip^{m_i}$, thus $RG(\mathcal{L}_n,\{H_i\})=0$ if and only if $m_i\rightarrow\infty$ when $i\rightarrow\infty$. Suppose $m_i\not\rightarrow\infty$. Since the chain is descending, $m_i$ is nondecreasing, therefore the sequence $\{m_i\}$ stabilizes, that is, there is such $i_0$ that $V_{i_0}=V_{j}$ for $j\geq i_0$, and let us denote it $V=V_{i_0}$. Let $W=\cap_{t\in\mathbb{Z}}x^tV$, then $xW=W$ and thus $W$ is a normal subgroup of $\mathcal{L}_n$ that is contained in all $H_i$, $i\geq 1$. Note that  $W$ actually equals to the finite intersection $\cap_{t=1}^{t=s_0}x^tV$ since by the Lemma~\ref{normal_group} $x^{s_0}V=V$. Therefore $W$ has finite index in $\mathcal{A}_n$ and thus is not trivial. It follows that the core of $\{H_i\}$ is also not trivial, since it contains $W$.   

Let $n=1$. Above we showed that $RG(\mathcal{L}_n,\{H_i\})\neq 0$ if and only if $V_i=H_i\cap\mathcal{A}_n$ stabilizes. Let $N$ be the core of $\{H_i\}$ and suppose $N$ is not trivial. Then it follows from Corollary~\ref{one_more_cor} that $N'=N\cap\mathcal{A}$ has finite index in $\mathcal{A}_1$.  Since $N'$ is contained in each $V_i$, the chain $\{V_i\}$ must stabilize.
\end{proof}

If $\cap_{i=1}^{\infty} H_i = H  $ then $H$ is a closed subgroup with respect to the profinite topology and
$RG(G,\lb H_i \rb)$ is a characteristic of the pair $(G,H)$ which in some situations may characterize the
pair $(G,H)$ up to isomorphism (two pairs $(G,H),(P,Q)$ are isomorphic if there is an isomorphism $\phi: G
\to P$ such that $\phi(H) = Q$).
If $RG(G,\lb H_i \rb)=0$ then the function
$$rg(m) = rg_{(G,\lb H_i \rb)}(m) = \frac{d(H_m)-1}{[G:H_m]},$$
which we call  \emph{relative rank gradient (function)} can be a subject of investigation
 (we  omit the index $(G,\lb H_i \rb)$ if the group and chain in consideration are understood).
 The rate of decay of $rg(m)$ is a sort of characteristic of the pair $(G,H)$ and may characterize the way $H$
is embedded in $G$ as a subgroup.
Note that the same subgroup can be obtained as  intersection of distinct chains. For instance, if the chain
consists of subgroups isomorphic to  the enveloping group (and such chains exist in scale invariant groups
for instance) one can delete certain elements in $H_i$ thereby allowing $rg(m)$ to decay as fast as one would
like.  Therefore, it is reasonable to restrict  attention  to the case when for some prime $p$ the index
$[H_{i}:H_{i+1}] = p$. In this case we say  chain is a \emph{$p$-chain}. One of corollaries of our results
listed above  is  showing that a lamplighter group contains subnormal $p$-chains of trivial intersection, with distinct rates of decay of the
rank gradient function.

Let $H_i$, $i\geq 0$ be a descending $p$-chain of subgroups of $\mathcal{L}_n$ such that $H_0=\mathcal{L}_n$. Note that for each $i$ the subgroup $H_i$ is isomorphic to $\mathcal{L}_k$ for some $k$ by Theorem~\ref{3_3}. Then there are two possible cases for $H_{i+1}$: either its projection on $\mathbb{Z}$ (under the identification of $H_i$ and $\mathcal{L}_k$) is $\mathbb{Z}$ or it is $p\mathbb{Z}$. In the first case $d(H_{i+1})=d(H_i)$ and in the second case $d(H_{i+1})=pd(H_i)-p+1$. We have the following result
  \begin{theorem}\label{4.2}
 Suppose that $g:\mathbb{N}\rightarrow\mathbb{N}$ is such that $g(0)=p+1$ and for each $i\in\mathbb{N}$ either $g(i+1)=g(i)$ or $g(i+1)=pg(i)-p+1$, and suppose that the set $\{i|g(i)=g(i+1)\}$ is infinite. Then there is a descending subnormal $p$-chain $\{H_i\}$ of subgroups of $\mathcal{L}_1$, such that $H_0=\mathcal{L}_1$, $d(H_i)=g(i)$ and $\cap_i H_i=\{1\}$.
 \end{theorem}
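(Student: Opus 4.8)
The plan is to build the chain $\{H_i\}$ so that each step carries out one of two moves dictated by $g$. Every $H_i$ will have finite index (namely $p^{i}$) in $\mathcal{L}_1$, hence is isomorphic to $\mathcal{L}_{s_i}$ where $s_i\mathbb Z$ is its projection onto $\mathbb Z$ (Theorem~\ref{3_3}); the two possibilities recalled just before the statement are the \emph{horizontal} move (replace $H_i\cong\mathcal{L}_k$ by a normal subgroup of index $p$ again isomorphic to $\mathcal{L}_k$, so $d$ is unchanged) and the \emph{vertical} move (replace it by the unique subgroup of index $p$ whose projection is $p\mathbb Z$, namely $\mathcal{A}_k\rtimes p\mathbb Z\cong\mathcal{L}_{pk}$, so $d\mapsto pd-p+1$). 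I would make a horizontal move exactly when $g(i+1)=g(i)$ and a vertical move when $g(i+1)=pg(i)-p+1$; then $d(H_i)=g(i)$ holds automatically, and since each step is normal of index $p$ the chain is a subnormal $p$-chain. So the whole problem reduces to arranging $\bigcap_i H_i=\{1\}$.

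I would realize the construction on the level of triples $(s_i,V_i,v_i)$ (Lemma~\ref{3_1}), starting from $(1,\mathcal{A}_1,0)$ and always keeping $V_i=(1-x)^{c_i}R$, an ideal of $R$, where $c_i$ is the number of horizontal moves so far. A horizontal move is then $V_{i+1}=(1-x)V_i$ (of index $p$ in $V_i$, with the integer line of $H_i$ acting trivially on $V_i/V_{i+1}\cong\mathbb F_p$, i.e.\ ${\det}^{*}V_{i+1}=1-x$ in the variable of $H_i$, which by Theorem~\ref{max} makes the step normal whichever of the $p$ subcosets of $v_i+V_i$ is picked for $v_{i+1}+V_{i+1}$); a vertical move keeps $V_{i+1}=V_i$, replaces $s_i=p^{e_i}$ by $p\,s_i$, and forces $v_{i+1}-\varphi_p(x^{s_i})v_i\in V_i$. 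Since $g$ is non-decreasing and, by hypothesis, equals its successor infinitely often, there are infinitely many horizontal moves, so $c_i\to\infty$ and $\bigcap_i V_i=\bigcap_N(1-x)^NR=0$; consequently $R=\mathcal{A}_1$ embeds into $\widehat R:=\varprojlim_i R/V_i=\varprojlim_N R/(1-x)^NR\cong\mathbb F_p[[t]]$ (Corollary~\ref{need}, $t=x-1$).

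The only remaining freedom is the choice of subcoset for $v_{i+1}$ at each horizontal step, and I would use it to force $\bigcap_i H_i\subseteq\mathcal{A}_1$; combined with $\bigcap_i V_i=0$ this gives $\bigcap_i H_i=\{1\}$. If $g$ is unbounded there are infinitely many vertical moves, $s_i\to\infty$, and every element of $\bigcap_i H_i$ has trivial projection for free, so no choice is needed. If $g$ is eventually constant, then $s_i$ is eventually a fixed $p^{e}$ and, past that point, the cosets $v_i+V_i$ form a coherent sequence with a limit $\widehat v\in\widehat R$; an element $(w,c)\in\bigcap_i H_i$ with $c=p^{e}m\neq 0$ then forces the image of $w$ in $\widehat R$ to equal $\psi_m\widehat v$ for the fixed nonzero element $\psi_m\in R$ (equal to $\varphi_m(x^{p^{e}})$ for $m>0$, and to the analogous Laurent polynomial for $m<0$). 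Since multiplication by the nonzero element $\psi_m$ is injective on the domain $\mathbb F_p[[t]]\cong\widehat R$ and $R$ is countable, the set $B$ of $z\in\widehat R$ for which $\psi_m z$ lies in the image of $R$ for some $m\neq 0$ is countable; as there are $p\ge 2$ subcosets to choose from at each of the infinitely many horizontal steps, a straightforward diagonalization (at the $j$-th horizontal step past stabilization, take a subcoset avoiding the $j$-th element of $B$) produces $\widehat v\notin B$. Then no $(w,c)$ with $c\neq 0$ lies in $\bigcap_i H_i$, as required.

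I expect the last step to be the real obstacle: it is exactly where one exploits that $\mathcal{A}_1$ is not complete --- when $g$ stabilizes the groups $H_i$ stay ``large'' (projection the fixed subgroup $p^{e}\mathbb Z$), so triviality of $\bigcap_i H_i$ cannot come from the projections collapsing and must instead be engineered by steering the coherent sequence $(v_i+V_i)$ away from the countable set $B$ of ``liftable'' limits, all while the vertical moves offer no freedom at all (they fix $V_i$ and determine $v_i$). Everything else --- normality and index $p$ at each step, the identity $d(H_i)=g(i)$, and $\bigcap_i V_i=0$ --- is immediate once the moves are organized as above.
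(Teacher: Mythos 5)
Your proposal is correct, but it reaches the conclusion by a genuinely different route than the paper. The paper first builds \emph{some} subnormal $p$-chain $\{H'_i\}$ realizing the prescribed ranks (via Theorem~\ref{max}), observes that the hypothesis that $\{i\,|\,g(i)=g(i+1)\}$ is infinite forces $RG(\mathcal{L}_1,\{H'_i\})=0$, invokes the converse direction of Theorem~\ref{rrr} (valid for $n=1$) to conclude the chain has trivial core, and then uses Corollary~\ref{hered} together with Proposition~4.11 of \cite{Grig} (topological freeness of the boundary action on the coset tree) to find a ray $\{g_iH'_i\}$ with trivial stabilizer; conjugating along it, $H_i=g_iH'_ig_i^{-1}$, kills the intersection while preserving indices, normality and ranks. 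You instead construct the chain explicitly in terms of triples, keeping $V_i=(1-x)^{c_i}R$, verifying directly that every coset choice at a horizontal step is normal of index $p$ and that vertical steps are forced, and then you make the intersection trivial by hand: trivially when $g$ is unbounded (the projections $s_i\mathbb{Z}$ already intersect in $0$, and $\bigcap_i(1-x)^{c_i}R=0$), and in the eventually constant case by steering the limit vector $\hat v\in\mathbb{F}_p[[t]]$ away from the countable set of ``liftable'' elements via a digit-by-digit diagonalization; your identification of membership of $(w,p^em)$ in $\bigcap_iH_i$ with $\psi_m\hat v\in\iota(R)$, the injectivity of multiplication by $\psi_m$ in the domain $\mathbb{F}_p[[t]]$, and the disjointness of the closures of the $p$ subcosets all check out. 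What each approach buys: the paper's argument is shorter and leverages general machinery (rank gradient plus topological freeness), and it shows the desired chain can be obtained from \emph{any} chain with the right ranks by conjugation; yours is self-contained (no appeal to Theorem~\ref{rrr} or to \cite{Grig}), gives a completely explicit family of chains, and makes visible that both proofs are at bottom ``small bad set'' arguments --- nowhere dense boundary points there, a countable subset of $\mathbb{F}_p[[t]]$ here. (Neither proof addresses the cosmetic mismatch that $d(\mathcal{L}_1)=2$ while the statement asks $g(0)=p+1$; this is an issue with the statement, not with your argument.)
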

\begin{proof}
Note that by Theorem~\ref{max} we can construct some descending subnormal $p$-chain $\{H'_i\}$ such that all of the above properties hold except possibly for $\cap_i H'_i=\{1\}$. Note also that 
\begin{equation*}
\begin{aligned}
& \frac{g(i+1)-1}{p^{i+1}}=\frac{g(i)-1}{p^{i}}\text{ if }g(i+1)=pg(i)-p+1,\\&
\frac{g(i+1)-1}{p^{i+1}}=\frac{1}{p}\frac{g(i)-1}{p^{i}}\text{ if }g(i+1)=g(i).
\end{aligned}
\end{equation*}
Since $\{H'_i\}$ is a $p$-chain, $[\mathcal{L}_1:H_i]=p^{i}$, and the fact that the set $\{i|g(i+1)=g(i)\}$ is infinite implies that $RG(\mathcal{L}_1,\{H'_i\})=0$. By the Theorem~\ref{rrr} it follows that the core of the chain $\{H'_i\}$ is trivial. 

Consider then the rooted binary tree associated to this chain, and the action of $G$ on it by multiplication from the right. The kernel of this action is the core of the chain $\{H'_i\}$, and therefore the action is faithful. It is clear that it is also spherically transitive. Then it follows from the Corollary~\ref{hered} and the Proposition~4.11 in \cite{Grig} that the action of $\mathcal{L}_1$ on the boundary of this tree is topologically free, thus the set of infinite paths in the tree with nontrivial stabilizer is a union of nowhere dense subsets of the boundary of the tree. In particular its complement is not empty. Take a point from this complement, that is an infinite path $\{g_iH'_i\}$ (for some $g_i\in\mathcal{L}_1$) with trivial stabilizer. It is easy to see that this stabilizer is the intersection of descending subnormal $p$-chain $H_i=g_iH'_ig_i^{-1}$. To finish the proof it is left to note that since $H'_i$ and $H_i$ are isomorphic,  $d(H_i)=d(H'_i)=g(i)$ for all $i\geq 0$.  
\end{proof}






Being solvable groups $\mathcal{L}_n$ are elementary amenable groups.   The $p$-groups ($p$ prime) of
intermediate growth constructed in \cite{MR781246} are nonelementary amenable groups. Since for them
gradient rank is also zero it is reasonable to study relative gradient rank for various closed subgroups in
pro-$p$-topology and $p$-chains representing them.

One of such groups is the  group $\mathcal{G}=\langle a,b,c,d\rangle$ generated by 4 involutions \cite{MR565099}. This group (as
well as most of other known groups of intermediate growth) act faithfully by automorphisms on a regular
rooted tree (in this case on a binary tree). We identify in a standard way vertices of binary tree
with binary words. For the stabilizer $P=st_{\mathcal G}(1^\infty)$ of infinite ray  $1^\infty$ (point of the
boundary $\partial T$ of a tree) the corresponding sequence is the sequence  $P_m=st_{\mathcal G}(1^m)$.  As
is shown in \cite{}, $P_1=st_{\mathcal{G}}(1)=\langle c,d,c^a,d^a\rangle$, $P_m=R_m\rtimes\{1,b,c,d\}$ for $m\geq 2$, where $R_m$ are iterative semidirect products
$R_1=K=\langle (ab)^2\rangle^{\mathcal{G}}$, and $R_m=(K\times R_{m-1})\rtimes\{1,(ac)^4\}$ for $m\geq 2$.
Computations then show that $d(P_m)=m+4$ for $m\geq 2$, $d(P_1)=4$.
 This implies that
\[
rg_{(\mathcal{G},\{P_i\})}(m)=\frac{m+4}{2^m},
\]
for $m\geq 2$.



\begin{question}
Which subgroups of $\mathcal{G}$ are closed in profinite topology and what kind of asymptotic behavior $rg(m)$
holds for them?
\end{question}

\bibliography{bibliography}{}
\bibliographystyle{plain}
\end{document}